\documentclass{article}

\usepackage[backend=biber,maxbibnames=4,sorting=none]{biblatex}

\usepackage{authblk}

\usepackage{xspace}
\usepackage{siunitx}
\usepackage{csvsimple}
\usepackage{todonotes}
\usepackage{mathtools,dsfont}
\usepackage{amsmath,amssymb,amsthm}
\usepackage{mathabx}

\usepackage{thmtools}
\usepackage{thm-restate}

\usepackage[norelsize,ruled,vlined,commentsnumbered]{algorithm2e}
\usepackage{tikz}
\usetikzlibrary{matrix, positioning, patterns, shapes.multipart, arrows.meta}
\usepackage{ circuitikz }

\usepackage{pgfplots}
\usepackage{tikzscale}
\usepackage{adjustbox}
\usepackage{subcaption}
\usepackage[group-digits=integer,group-minimum-digits=4,print-unity-mantissa=false]{siunitx}
\usepackage{booktabs}
\usepackage{multicol}
\usepackage{placeins}
\usepackage{enumitem}

\usepackage[short]{optidef}

\usetikzlibrary{external}
\graphicspath{{Figures/}}

\usepackage[hidelinks]{hyperref}

\newcommand{\st}{\textrm{s.t.\xspace}}
\newcommand{\ie}{i.e.,\xspace}

\newcommand{\define}{\coloneqq}

\newcommand{\Real}{\mathds{R}}
\newcommand{\Nat}{\mathds{N}}
\newcommand{\FS}{\mathcal{C}}

\newcommand{\Lag}{\mathcal{L}}

\newcommand{\IndexSet}{\mathcal{I}}
\newcommand{\I}[1]{\IndexSet_{#1}}
\newcommand{\II}[2]{\IndexSet_{(#1,#2)}}
\newcommand{\SetN}{\mathcal{N}}
\newcommand{\VanConUP}{G}
\newcommand{\ConConUP}{H}
\newcommand{\VanCon}[2]{\VanConUP_{#1,#2}}
\newcommand{\ConCon}[1]{\ConConUP_{#1}}
\newcommand{\DVanCon}[2]{\nabla \VanCon{#1}{#2}}
\newcommand{\DConCon}[1]{\nabla \ConCon{#1}}
\newcommand{\idxVCL}[2]{#2+(#1-1)n_\VanConUP+n_\ConConUP}%{{#1}_G}
\newcommand{\idxVC}[2]{#1,#2}
\newcommand{\idxCC}[1]{#1}%{{#1}_H}

\newcommand{\ECon}{g}
\newcommand{\ICon}{h}
\newcommand{\DECon}{\nabla g}
\newcommand{\DICon}{\nabla h}

\newcommand{\primalProd}[2]{\langle #1, #2 \rangle_{X}}
\newcommand{\dualProd}[2]{\langle #1, #2 \rangle_{Y}}

\newcommand{\primalNorm}[1]{\|#1\|_{X}}
\newcommand{\dualNorm}[1]{\|#1\|_{Y}}

\theoremstyle{definition}% default
\newtheorem{theorem}{Theorem}

\newtheorem{lemma}[theorem]{Lemma}

\newtheorem{corollary}[theorem]{Corollary}
\newtheorem{definition}{Definition}

\newcommand{\IfU}[1]{\If{\textup{#1}}}

\newcommand{\PosIndices}{\mathcal{B}^\ConConUP_{+}}
\newcommand{\ZeroIndices}{\mathcal{B}^\ConConUP_{0}}
\newcommand{\PosIndicesVC}{\mathcal{B}_{+}}
\newcommand{\BetaIndices}{\mathcal{B}^\ConConUP_\mathrm{nsg}}
\newcommand{\ZeroBetaIndices}{\mathcal{B}^\ConConUP_\mathrm{sg}}
\newcommand{\BetaIndicesVC}{\mathcal{B}_\mathrm{nsg}}
\newcommand{\ZeroBetaIndicesVC}{\mathcal{B}_\mathrm{sg}}

\newcommand{\IUBIndices}{\mathcal{I}_{\mathrm{UB}}^{\ConConUP}}

\DeclareMathOperator{\MPBVCS}{MPBVCS}
\DeclareMathOperator{\BVC}{BVC}
\DeclareMathOperator{\NLP}{NLP}
\DeclareMathOperator{\super}{super}

\newcommand{\signature}{\sigma}

\renewcommand{\epsilon}{\varepsilon}

\newcommand{\cupdot}{\mathbin{\mathaccent\cdot\cup}}

\newcommand{\instName}[1]{\texttt{#1}}

\newcommand{\thetitle}{Constraint Qualifications and Gradient Flows for Block Vanishing Constraint Problems}

\newcommand{\firstauthor}{Julian Niederer}
\newcommand{\secondauthor}{Christoph Hansknecht}
\newcommand{\thirdauthor}{Andreas Potschka}
\newcommand{\fourthauthor}{Ekaterina Kostina}

\newcommand{\firstaffil}{Interdisciplinary Center for Scientific Computing, University of Heidelberg, Heidelberg, Germany}
\newcommand{\secondaffil}{Institute of Mathematics, Clausthal University of Technology, Clausthal-Zellerfeld, Germany}

\hypersetup{
  pdftitle = {\thetitle},
  pdfauthor = {\firstauthor, \secondauthor, \thirdauthor, \fourthauthor}
}

\tikzset{
  potential_bar/.style={line cap=round, line join=round, very thin},
  potential_tenbar/.style={line cap=round, line join=round, thin},
  bar/.style={line cap=round, line join=round},
  potential_coord/.style={fill,circle, inner sep=0},
  loading_force/.style={->,>=stealth, very thick, gray},
  solution_bar/.style={line join=round},
}

\newcommand{\drawNumbersPotentialBars}[1]{%
  \csvreader[head to column names,
  ]
  {Data/#1}{}{
    \draw[potential_tenbar] (\srcx, \srcy) -- (\tgtx,\tgty);

    \path let \p1 = (\srcx,\srcy), \p2 = (\tgtx,\tgty) in
      node[draw, circle, fill=white, inner sep=0.2pt, font=\small] at ($(\p1)!.5!(\p2)+ (\posx,\posy)$) {\the\numexpr\csvcoli+1\relax};
  }
}

\makeatletter
\newcommand{\setword}[2]{%
  \phantomsection
  #1\def\@currentlabel{\unexpanded{#1}}\label{#2}%
}
\makeatother

\newenvironment{enumalpha}{\begin{enumerate}[label=(\alph*)]}{\end{enumerate}}

\begin{document}

\title{\thetitle}

\author[1]{\firstauthor}
\author[2]{\secondauthor}
\author[2]{\thirdauthor}
\author[1]{\fourthauthor}

\affil[1]{\firstaffil}
\affil[2]{\secondaffil}

\maketitle
\begin{abstract}
Mathematical Programs with Blocks of Vanishing Constraints (MPBVCs) are a
challenging class of optimization problems due to the loss of standard constraint qualifications. 
Based on existing theory for classical MPVCs, i.e., MPBVC with block size one, we extend the constraint-qualification theory for MPBVCs 
and show that the known LICQ-type condition for MPBVCs implies the Guignard Constraint Qualification (GCQ). 
A novel piecewise gradient flow approach converges under GCQ. 
It directly computes strongly stationary points and is an extension of the 
piecewise gradient flow approach for classic MPVC problems. 
Numerical experiments on several truss topology design instances 
with two and even three applied load forces demonstrate the robustness and effectiveness of the method.
\end{abstract}

\section{Introduction}

In this paper we investigate so-called \emph{Mathematical Programs with Blocks of Vanishing Constraints}~\cite{mpbvc_pal,diss_pal} (MPBVCs), 
which generalize the well-known class of Mathematical Programs with Vanishing Constraints (MPVCs). 
Our primary aim is to obtain \emph{\ref{word:strong_stationary} points} for MPBVCs, which are equivalent to \emph{Karash-Kuhn-Tucker} (KKT) points. 
To achieve this, we require suitable optimality conditions together with appropriate constraint qualifications. 
However, existing theory for MPBVCs is not yet complete in this regard. 
Therefore, we state new constraint qualifications tailored for MPBVC based on existing theory for MPVC, ensuring that the Sequential Homotopy Method~\cite{sequential_homotopy} 
can be used to approximate strongly stationary points for MPBVCs.
An MPBVC is an optimization problem of the form
\begin{equation}
  \label{eq:mpbvc}
  \tag{MPBVC}
  \begin{aligned}
    \min_{x} \quad & f(x) \\
    \st \quad & \ECon(x) = 0, && \\
              & \ICon(x) \geq 0, && \\
               & \ConCon i(x) \geq 0, && \forall i\,\in \{1,\dots,n_\ConConUP\} \\
               & \VanCon ij(x)\cdot\ConCon i(x) \geq 0, && \forall (i,j)\in \{1,...,n_\ConConUP\}\times\{1,\dots,n_\VanConUP\}, \\
  \end{aligned}
\end{equation}
where $f, \VanCon ij, \ConCon i: \Real^{n} \to \Real$,
$\ECon : \Real^{n} \to \Real^{n_\ECon}$, and
$\ICon : \Real^{n} \to \Real^{n_\ICon}$ are functions which we assume
to be at least twice continuously differentiable. The constraints $\VanCon ij$ are called \emph{vanishing}
constraints~(VCs) because, depending on the value of their
corresponding \emph{controlling} constraint~(CC) $\ConCon i$,
$\VanCon ij$ is either present in the problem (when
$\ConCon{i}(x) > 0$) or vanishes from it (when $\ConCon{i}(x) = 0$).
MPVCs are MPBVCs with a block size
$n_\VanConUP$ equal to 1, \ie restricted to the case where
each controlling constraint only ever controls a single vanishing constraint.
Note that we assume that $n_\VanConUP$ is equal for each block. This
simplification is for notational convenience only and our results
hold in the non-homogeneous case as well.

As is the case for MPVCs, difficulties in solving MPBVCs arise from the loss of strong constraint qualifications.
Classical conditions such as \emph{Linear Independence Constraint Qualification (LICQ)}~\cite{numerical_optimization} and
\emph{Mangasarian-Fromovitz Constraint Qualification (MFCQ)}~\cite{mfcq} are typically not satisfied at bi-active points,
impeding the determination of suitable dual multipliers that satisfy the KKT conditions.
Furthermore the products $ \VanCon ij(x)\cdot\ConCon i(x)$ results in a non convex feasible set,
where either $\ConCon{i}(x) > 0$ and $\VanCon{i}{j}(x) \geq 0$ or $\ConCon{i}(x) = 0$
and $\VanCon{i}{j}(x)$ is free (see e.g. Figure~\ref{fig:MPBVC_vertical}). Depending on the additional constraints $g$ and $h$,
the feasible set may even become disconnected.

MPVCs can be transformed into \emph{Mathematical Programs
  with Equilibrium Constraints}, called MPECs for short~\cite{mpec,mpvc}.
As opposed to MPVCs, in MPECs the product $G_{i}H_{i}$ must be equal to zero and
each constraint $G_{i}$ and $H_{i}$ must be non-negative.
Solution strategies for MPECs are also the subject of current research, 
which relies, for example, on relaxation approaches~\cite{steffensen2010new, scholtes} or solving a sequence of 
specific subproblems~\cite{nurkanovic2025globally}. 
The problem class \emph{Mathematical Programs with Complementarity Constraints} (MPCCs)
encompasses MPVCs and MPECs, where solution strategies can be found, for example,
 in~\cite{nurkanovic2025solving,hoheisel2013theoretical}.
However it remains unclear how to reformulate the block structure of the vanishing 
constraints into an MPCC without increasing the problem size 
or introducing additional stationarity concepts. 
We therefore focus on the direct solution of MPBVCs.

In~\cite{mpvc}, a modified version of the classical constraint qualifications was introduced along with
corresponding optimality conditions for MPVC (\ie $n_\VanConUP = 1$) as well as a definition of strong stationarity (see~\cite[Definition~6.1.1]{diss_hoheisel}).
A more detailed analysis of constraint qualifications is provided in~\cite{on_the_abadie, mpvc_stat_cons}, 
where both the Abadie and Guignard conditions are examined for MPVC. The results indicate that the the Guignard constraint qualification 
holds under similarly mild (LICQ-type) assumptions. 
This LICQ-type constraint qualifications implies strong stationarity for MPVC. Note that which are KKT-points (see~\cite{mpvc}), 
and therefore supports second-order sufficient conditions and local convergence analysis \cite{relaxation_method}.

For \eqref{eq:mpbvc},~\cite{diss_pal,mpbvc_pal} extended results on MPVC
to the case of blocks of vanishing constraints (\ie $n_\VanConUP \geq 1$). 
They analyze corresponding stationarity concepts and constraint qualifications for strong and weak stationarity. 
In particular they introduced the LICQ-type (\ref{word:BVC_LICQ}) constraint qualification. 
While~\ref{word:BVC_LICQ} ensures weak stationarity, strong 
stationarity at optimal points under~\ref{word:BVC_LICQ} was not shown. 

Motivated by~\cite{on_the_abadie} and~\cite{diss_hoheisel,mpvc}, we show that~\ref{word:GCQ} for MPBVC implies strong stationarity at optimal points, 
which are equivalent to KKT points and therefore coincides with classic stationarity concepts of the original problem.
As intermediate steps, we introduce MFCQ-type (\ref{word:BVC_MFCQ}), Abadie-type (\ref{word:BVC_ACQ}), and Guignard-type (\ref{word:BVC_GCQ}) constraint qualifications, and prove that, 
together with an additional independence condition (\ref{word:partial_BVC_LICQ}), the~\ref{word:BVC_LICQ} imply~\ref{word:GCQ} in the MPBVC setting.

These theoretical results enable the computation of strongly stationary points for 
\eqref{eq:mpbvc} via the Sequential Homotopy Method~\cite{sequential_homotopy},
since under GCQ, first-order critical points coincide with equilibrium points of the employed projected antigradient / gradient flow. 
We therefore propose a practical method in Section~\ref{sec:algorithm} by extending a
variant of the sequential homotopy method used to solved
MPVCs presented in~\cite{flow_preprint} to the block vanishing case.

Primal-dual gradient flows interpret constrained optimization problems as 
saddle-point dynamical systems where equilibria coincide with KKT points.
Lyapunov-based stability analyses were developed by Feijer and Paganini~\cite{FP2010}, 
while subsequent works gave convergence properties for projected primal-dual 
dynamics~\cite{CMC2016} and exponential stability~\cite{QuLi2018}. 
Related developments include, for example, first-order primal-dual methods such as the Chambolle-Pock
algorithm~\cite{CP2011}. The sequential homotopy method~\cite{sequential_homotopy} 
is based on a projected backward Euler discretization of the projected primal-dual resp. antigradient / gradient flow, 
providing a globalization strategy for locally convergent optimization methods.

We present a numerical case study in Section~\ref{sec:num_ex} based on truss topology design problems. These problems
are of MPVC type in case of a single load case but generalize to MPBVCs under the natural extension of multiple load
cases. We compare our approach with the relaxation approach introduced in~\cite{diss_pal}, which approximates weak stationary points.

The contributions and structure of this paper are as follows:
\begin{itemize}
\item
  We begin in Section~\ref{sec:constraints_qualifications} by expanding the theory of constraint qualifications for \eqref{eq:mpbvc}.
  We prove, that~\eqref{word:BVC_LICQ} implies~\eqref{word:GCQ} at a feasible point.
  Furthermore, we show that if~\eqref{word:GCQ} holds at a local minimum, it is a strongly stationary point and therefore a KKT point.
\item
  We extend the flow-based algorithm from~\cite{flow_preprint} in Section~\ref{sec:algorithm} and show 
  it is capable of approximating strongly stationary points.
\item
  We perform several computational experiments in Section~\ref{sec:num_ex} by solving truss topology optimization instances with several load cases.
\end{itemize}

\paragraph{Notation}
For a point $x \in \Real^{n}$, we define with $\SetN_\ECon\define\{1,\dots,n_\ECon\}$, $\SetN_\ICon\define\{1,\dots,n_\ICon\}$,
$\SetN_\ConConUP\define\{1,\dots,n_\ConConUP\}$, and $\SetN_\VanConUP\define\{1,\dots,n_\VanConUP\}$ the 
index sets
\begin{align}\label{vanishing: indexsets}
\begin{split}    
\II{+}{+}(x) &\define\{(i,j)\in \SetN_\ConConUP\times\SetN_\VanConUP\mid\ConCon i(x)>0, \, \VanCon ij(x) >0\},\\
\II{+}{0}(x) &\define\{(i,j)\in \SetN_\ConConUP\times\SetN_\VanConUP\mid\ConCon i(x)>0, \, \VanCon ij(x) =0\},\\
\II{0}{+}(x) &\define\{(i,j)\in \SetN_\ConConUP\times\SetN_\VanConUP\mid\ConCon i(x)=0, \, \VanCon ij(x) >0\},\\
\II{0}{0}(x) &\define\{(i,j)\in \SetN_\ConConUP\times\SetN_\VanConUP\mid\ConCon i(x)=0, \, \VanCon ij(x) =0\},\\
\II{0}{-}(x) &\define\{(i,j)\in \SetN_\ConConUP\times\SetN_\VanConUP\mid\ConCon i(x)=0, \, \VanCon ij(x) <0\},\\
\I{0}(x) &\define\{(i,j)\in \II{0}{+}\cup\II{0}{0}\cup\II{0}{-}\},\\
\I{\ICon}(x) &\define\{k \in \SetN_\ICon \mid \ICon_k(x) = 0\}.
\end{split}
\end{align}
where we call the indices in $\II{0}{0}(x)$ and respective $x$ \emph{bi-active} (see Figure~\ref{fig:MPBVC_vertical}).
We also consider the indices for $\ConConUP$ in $\II \cdot \cdot (x)$ and set
\begin{equation*}
  \begin{aligned}
    \II{\cdot}{\cdot}^\ConConUP(x) \define \{i\in \SetN_\ConConUP \mid \exists j \in \SetN_\VanConUP \text{ with } (i,j) \in \II\cdot\cdot (x) \},\\
    \I{0}^\ConConUP(x) \define \{i\in \SetN_\ConConUP \mid \exists j \in \SetN_\VanConUP \text{ with } (i,j) \in \I{0} (x) \}.
  \end{aligned}
\end{equation*}
Furthermore for a given set $M\subset\SetN_\ConConUP\times\SetN_\VanConUP$ we write for short \[
  \ConCon{M^\ConConUP}(x) := \big(\ConCon{i}(x)\big)_{i\in M^\ConConUP}~\text{and}~
  \VanConUP_{M}(x) := \big(\VanCon{i}{j}(x)\big)_{(i,j)\in M}. 
\]
\begin{figure}[ht]
  \centering
  \includegraphics[width=.35\textwidth]{MPBVC_vertical}
  \caption{The feasible region of a controlling / vanishing constraint.}
  \label{fig:MPBVC_vertical}
\end{figure}

We let $\primalProd{x}{x'}$ denote the scalar product on the real
finite-dimensional Hilbert space $X$, inducing the norm
$\primalNorm{\cdot}$ for all $x, x' \in X$.  Analogously, we equip the
real finite-dimensional Hilbert spaces $Y$ and $\tilde{S}$ with scalar
products that induce their respective norms.

\section{Constraint Qualifications for Block Vanishing Problems}\label{sec:constraints_qualifications}
\begin{figure}[!t]
 	\centering
  \includegraphics{schema}
 	\caption{Schematic overview of constraint qualifications and their implications.
  The strongest presented constraint qualification is the~\ref{word:BVC_LICQ} which leads both to strong stationarity, i.e. a KKT point for a feasible point.}
\label{vanishing: fig: CQ implikations}
\end{figure}

 In this section we derive a concept of stationarity in order to
 obtain practical first-order optimality conditions for MPBVCs.  The
 theoretical results are based
 on~\cite{diss_pal,mpbvc_pal,on_the_abadie,diss_hoheisel,mpvc}.  In
 particular we prove that if~\eqref{word:GCQ} holds for a local
 minimum of an MPBVC, the corresponding point is strongly stationary,
 \ie a KKT point.  Furthermore we show that~\eqref{word:BVC_LICQ}
 implies~\eqref{word:GCQ}. A schematic overview of introduced
 constraint qualifications and their corresponding relationships are
 given in Figure~\ref{vanishing: fig: CQ implikations}.

 In~\cite{mpbvc_pal}, a weaker formulation of~\ref{word:ACQ} was introduced, which is based on
 VC-ACQ from~\cite[Definition~3]{mpvc}. There are two common ways to
 define further constraint qualifications.  One is stricter and
 is based on~\cite[Section~4]{mpvc}. The other option is based
 on~\cite{diss_pal} and~\cite[Section~5]{mpvc} and leads
 to slightly different and also weaker version of KKT-like conditions
 (\textit{weak stationarity}), which gives weak necessary conditions for local optimality. 
 Our aim is to get KKT-points of \eqref{eq:mpbvc}. Note that we present here new formulations of constraint qualifications which differ
 from~\cite{diss_pal, mpbvc_pal}.
 
\subsection{Strong Stationarity under Guignard CQ}
We introduce a new cone, where we use more information of the problem structure. More precisely we include derivative
information in the linearized cone of all active constraints. The following results are based on~\cite[Section~3.3]{diss_pal}
and~\cite{on_the_abadie}.

\begin{lemma}
  \label{sec:linearized_cone_def}
  Let $x^\star\in\Real^n$ be a feasible point of~\eqref{eq:mpbvc}.
  Then the linearized cone of~\eqref{eq:mpbvc} at $x^\star$ is equal to
  \begin{align}
    \label{eq:mpbvc-L(x)}
    \Lag(x^\star)=\left\{
    d\in\Real^n \biggm| \begin{array}{rl}
      \DECon_k(x^\star)^Td~=0&\mathrm{for}\,\;k\in\SetN_\ECon,\\
      \DICon_l(x^\star)^Td~\geq0&\mathrm{for}\,\;l\in\I\ICon(x^\star),\\
      \DConCon i(x^\star)^Td\;~= 0&\mathrm{for}\,\;i\in\II{0}{-}^\ConConUP(x^\star),\\
      \DConCon i(x^\star)^Td\; ~\geq 0&\mathrm{for}\,\;i\in\II{0}{0}^\ConConUP(x^\star)\cup\II{0}{+}^\ConConUP(x^\star),\\
      \DVanCon ij(x^\star)^Td ~\geq 0&\mathrm{for}\,(i,j)\in\II{+}{0}(x^\star)
    \end{array}
    \right\}.
  \end{align}
\end{lemma}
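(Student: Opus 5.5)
The plan is to start from the standard definition of the linearized cone of a nonlinear program, applied to \eqref{eq:mpbvc} viewed as an ordinary NLP with constraints $\ECon(x)=0$, $\ICon(x)\geq 0$, $\ConCon i(x)\geq 0$ and $\Theta_{ij}(x)\define\VanCon ij(x)\ConCon i(x)\geq 0$. It is
\[
\Lag(x^\star)=\{d \mid \DECon_k(x^\star)^Td=0,\ \DICon_l(x^\star)^Td\geq 0\ (l\in\I\ICon(x^\star)),\ \DConCon i(x^\star)^Td\geq 0\ (\ConCon i(x^\star)=0),\ \nabla\Theta_{ij}(x^\star)^Td\geq 0\ (\Theta_{ij}(x^\star)=0)\}.
\]
The equality and $\ICon$ rows coincide with those in \eqref{eq:mpbvc-L(x)}, so everything reduces to the controlling and vanishing constraints. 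We compute
\[
\nabla\Theta_{ij}(x^\star)=\ConCon i(x^\star)\DVanCon ij(x^\star)+\VanCon ij(x^\star)\DConCon i(x^\star).
\]

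Next we determine which product constraints are active and what each contributes. By feasibility, $\Theta_{ij}(x^\star)=0$ exactly for $(i,j)\in\II{+}{0}(x^\star)\cup\I{0}(x^\star)$. On $\II{+}{0}$ the gradient is $\ConCon i(x^\star)\DVanCon ij(x^\star)$ with $\ConCon i(x^\star)>0$, which gives $\DVanCon ij(x^\star)^Td\geq0$. On $\II{0}{0}$ the gradient vanishes, so there is no condition. On $\II{0}{+}$ the condition is $\DConCon i(x^\star)^Td\geq 0$ after dividing by $\VanCon ij(x^\star)>0$. On $\II{0}{-}$ the condition is $\DConCon i(x^\star)^Td\leq 0$ after dividing by $\VanCon ij(x^\star)<0$. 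Finally, the active controlling constraints $\ConCon i(x^\star)=0$, i.e.\ $i\in\I{0}^\ConConUP(x^\star)$, each give $\DConCon i(x^\star)^Td\geq0$.

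The main step is combining these conditions blockwise for each $i\in\I{0}^\ConConUP(x^\star)$; this is also where care is needed. A single block may contain indices $j$ of several types. If some $j$ has $(i,j)\in\II{0}{-}$, then $\geq 0$ from $\ConCon i$ and $\leq 0$ from that product combine to $\DConCon i(x^\star)^Td=0$. Otherwise only $\geq0$ conditions remain. So the description should be read with $\II{0}{-}^\ConConUP$ taking precedence: an $i$ lying in both $\II{0}{-}^\ConConUP$ and $\II{0}{0}^\ConConUP\cup\II{0}{+}^\ConConUP$ gets both $=0$ and $\geq0$, which is consistent and equivalent to $=0$. Because of this, listing $i$ in both rows is harmless, and the stated set equals the computed one.

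Both inclusions then follow from this componentwise equivalence. Any $d$ in the standard cone satisfies every listed condition. Conversely, any $d$ satisfying the listed conditions satisfies every original linearized inequality, since each was shown to be implied by, or equal to, a listed one. The bi-active indices contribute nothing except through $\ConCon i$ itself.
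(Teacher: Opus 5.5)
Your proposal is correct and follows essentially the same route as the paper. You write the standard linearized cone with the product constraints $\VanCon ij\ConCon i\geq 0$, compute their gradients, and case-split over $\II{+}{0}$, $\II{0}{0}$, $\II{0}{+}$, $\II{0}{-}$, combining the $\leq 0$ coming from $\II{0}{-}$ with the $\geq 0$ from the active controlling constraint to obtain equality. Your remark that a block index $i$ can lie in both $\II{0}{-}^\ConConUP$ and $\II{0}{0}^\ConConUP\cup\II{0}{+}^\ConConUP$, and that the resulting $=0$ and $\geq 0$ rows are then consistent, makes explicit a point the paper leaves implicit.
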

\begin{proof} 
  The proof is based on the proof of~\cite[Lemma~4]{mpvc}.
  We note that the linearized cone of~\eqref{eq:mpbvc} is defined as
  \begin{align*}
    \left\{
    d\in\Real^n \biggm|
    \begin{array}{rl}
      \DECon_k(x^\star)^Td=0\,&\mathrm{for}\,k\,\in\SetN_\ECon,\\
      \DICon_l(x^\star)^Td\geq0\,&\mathrm{for}\,l\,\in\I\ICon(x^\star),\\
      \DConCon i(x^\star)^Td\,\geq 0\,&\mathrm{for}\,i\,\in\I0^\ConConUP(x^\star),\\
      \nabla \theta_{ij}(x^\star)^Td \geq 0\,&\mathrm{for}\,(i,j)\,\in\I0(x^\star)\cup\II{+}{0}(x^\star)
    \end{array}
    \right\},
  \end{align*}
  where $\theta_{ij}(x)\define\VanCon ij(x)\ConCon i(x)$.
  We distinguish the following cases with respect to the gradients $\nabla\theta_{ij}(x^\star)$:
\begin{itemize}
    \item For $(i,j) \in \II{0}{-}$ it holds that $\nabla\theta_{ij}(x^\star)^Td = \VanCon ij(x^\star)\DConCon i(x^\star)^Td \geq 0$
    which is equivalent to ${\DConCon i(x^\star)^Td \leq 0}$, since $\VanCon ij(x^\star)<0$. We obtain with $\DConCon i(x^\star)^Td\geq 0$ that $\DConCon i(x^\star)^Td=0$. 
    \item For $(i,j) \in \II{0}{0}$ it holds that $\nabla\theta_{ij}(x^\star)^Td = 0\geq 0$, \ie we can exclude this case, since it provides no information.
    \item For $(i,j) \in \II{0}{+}$ it holds that $\nabla\theta_{ij}(x^\star)^Td = \VanCon ij(x^\star)\DConCon i(x^\star)^Td\geq 0$ which is equivalent to  $\DConCon i(x^\star)^Td \geq 0$. This requirement is already included.
    \item For $(i,j) \in \II{+}{0}$ it holds that $\nabla\theta_{ij}(x^\star)^Td =\ConCon i(x^\star)\nabla \VanCon ij(x^\star)^Td \geq 0$. Since $\ConCon i(x^\star)\geq 0$, it follows that ${\DVanCon ij(x^\star)^Td\geq 0}$.
\end{itemize}
\vspace{-1.5em}\end{proof}
We are now able to define Abadie and Guignard constraint qualification.
\begin{definition}[Abadie and Guignard constraint qualification~\cite{abadie1966kuhn,guignard_stat_cons}]
  \label{vanishing: def: ACQ}
    Let $x$ be a feasible point of a standard NLP with feasible set $X$. We define the standard \textit{tangent cone} $\mathcal{T}(X,x)$ at $x \in X$
    and the \textit{polar cone} $M^{-}$ of a set $M$ as
    \begin{align}
        \mathcal{T}(X,x) &\define \bigg\{ d\in \Real^n \Bigm| \exists\{x^{(k)}\}\subset X,\{t^{(k)}\}\to0\,:\, x^{(k)}\to x\text{ and }\frac{x^{(k)}-x}{t^{(k)}}\to d \bigg\},\label{vanishing: T(x)}\\
        M^{-} &\define \{d \in \Real^{\tilde{n}} \mid \langle d, x \rangle \leq 0 \text{ for all } x \in M\}.
    \end{align}
    The point $x$ satisfies the \textit{Abadie constraint qualification} (\setword{ACQ}{word:ACQ}) if {$\Lag(x)=\mathcal{T}(X,x)$} and the \textit{Guignard constraint qualification} (\setword{GCQ}{word:GCQ}) if $\Lag(x)^{-}=\mathcal{T}(X,x)^{-}$.
\end{definition}
We define the  \textit{BVC Lagrange function} for~\ref{eq:mpbvc} by \begin{align}
\label{eq:mpbvc-Lagrangian}
    \Lag(x,\mu^\ECon,\mu^\ICon,\mu^\VanConUP,\mu^\ConConUP) &\define f(x)-\sum_{k\in\SetN_\ECon}\mu^\ECon_k\ECon_k(x) -\sum_{l\in\SetN_\ICon}\mu^\ICon_l\ICon_l(x)\\ &\enspace-\!\!\!\sum_{(i,j)\in\SetN_\ConConUP\times\SetN_\VanConUP} \!\!\!\mu^\VanConUP_{ij}\VanCon ij(x) -\sum_{i\in\SetN_\ConConUP}\mu^\ConConUP_i\ConCon i(x)\notag
\end{align}
with $x\in\Real^n$, the (classic) Lagrange multipliers $\mu^\ECon\in\Real^{n_\ECon}\mu^\ICon\in\Real^{n_\ICon}$, and \textit{BVC Lagrange multipliers} $\mu^\VanConUP\in\Real^{n_\VanConUP\cdot n_\ConConUP},\mu^\ConConUP\in\Real^{n_\ConConUP}$. 
Here, the product is split and every constraint is treated separately. The product structure is hidden in the BVC Lagrange multipliers and index sets.
The following theorem gives KKT like conditions for vanishing constraint problems and is analogous to~\cite[Theorem~3.16]{diss_pal} 
applied to~\ref{word:GCQ} instead of~\ref{word:ACQ}.\begin{restatable}{theorem}{vanishingKKT}\label{vanishing: thm: kkt}
    Let $x^\star\in\Real^n$ be a local minimum of problem~\eqref{eq:mpbvc} where~\ref{word:GCQ} holds at $x^\star$. Then there exist multipliers $\mu^\ECon\in\Real^{n_\ECon}$, $\mu^\ICon\in\Real^{n_\ICon}$, $\mu^\VanConUP\in\Real^{n_\VanConUP\cdot n_\ConConUP}$, and $\mu^\ConConUP\in\Real^{n_\ConConUP}$ such that \begin{align}\label{vanishing: kkt-lagrange-equation}
        \nabla_x\Lag(x,\mu^\ECon,\mu^\ICon,\mu^\VanConUP,\mu^\ConConUP)=0
    \end{align} 
    and \begin{align}\label{vanishing: kkt-lagrange-multiplier}
    	\ECon_k(x^\star)&=0& &\hspace{-9em}\text{for all }\;k\in \SetN_\ECon,\notag\\
    	\mu^\ICon_l &\geq0,\, \ICon_l(x^\star)\geq0, \, \mu^\ICon_l\ICon_l(x^\star)=0 & &\text{for all }\;l\in \SetN_\ICon,\notag \\
            \mu^\ConConUP_i &=0  & &\hspace{-9em}\text{for all }~i\in \II{+}{+}^\ConConUP(x^\star)\cup\II{+}{0}^\ConConUP(x^\star),\notag \\
            \mu^\ConConUP_i &\geq0 & &\hspace{-9em}\text{for all }~i\in\I0^\ConConUP \text{ if there exists no } j ~\text{with}~(i,j)\in \II{0}{-}(x^\star),  \notag \\
            \mu^\ConConUP_i &\text{ free} & &\hspace{-9em}\text{for all }~i\in\I0^\ConConUP \text{ if there exists } j~ \text{with}~ (i,j)\in \II{0}{-}(x^\star), \notag  \\
            \mu^\VanConUP_{ij} &\geq0 &&\hspace{-9em}\text{for all }(i,j)\in \II{+}{0}(x^\star), \notag\\
            \mu^\VanConUP_{ij} &=0 &&\hspace{-9em}\text{otherwise.}    
    \end{align}
  \end{restatable}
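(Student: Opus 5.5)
The argument runs through the standard first-order necessary condition for a local minimum, phrased with the tangent cone, combined with the description of the linearized cone in Lemma~\ref{sec:linearized_cone_def}. Since $x^\star$ is a local minimum of \eqref{eq:mpbvc} and $f$ is continuously differentiable, the usual argument gives $\nabla f(x^\star)^T d \geq 0$ for all $d\in\mathcal{T}(X,x^\star)$. Indeed, for any sequence $x^{(k)}\to x^\star$ in $X$ with $(x^{(k)}-x^\star)/t^{(k)}\to d$, Taylor expansion yields $0\le f(x^{(k)})-f(x^\star)=t^{(k)}\nabla f(x^\star)^Td+o(t^{(k)})$. Hence $-\nabla f(x^\star)\in\mathcal{T}(X,x^\star)^{-}$. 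By \ref{word:GCQ} this polar equals $\Lag(x^\star)^{-}$. Therefore $\nabla f(x^\star)^Td\ge 0$ for every $d\in\Lag(x^\star)$, with $\Lag(x^\star)$ given by \eqref{eq:mpbvc-L(x)}.

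The next step writes $\Lag(x^\star)$ as a polyhedral cone. The constraints are:
\begin{itemize}
\item equalities $\DECon_k^Td=0$ for $k\in\SetN_\ECon$ and $\DConCon i^Td=0$ for $i\in\II{0}{-}^\ConConUP$;
\item inequalities $\DICon_l^Td\ge0$ for $l\in\I\ICon$, $\DConCon i^Td\ge0$ for $i\in(\II{0}{0}^\ConConUP\cup\II{0}{+}^\ConConUP)\setminus\II{0}{-}^\ConConUP$, and $\DVanCon ij^Td\ge0$ for $(i,j)\in\II{+}{0}$.
\end{itemize}
An index $i$ may lie in both $\II{0}{-}^\ConConUP$ and $\II{0}{0}^\ConConUP$. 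In that case the equality dominates, so the inequality is dropped. I then apply Farkas' lemma, in the form that the dual of a polyhedral cone is generated by its constraint gradients, to the statement that $\nabla f(x^\star)$ lies in the dual cone of $\Lag(x^\star)$. This gives
\[
\nabla f(x^\star)=\sum_{k}\mu^\ECon_k\DECon_k(x^\star)+\sum_{l\in\I\ICon}\mu^\ICon_l\DICon_l(x^\star)+\sum_{i\in\I0^\ConConUP}\mu^\ConConUP_i\DConCon i(x^\star)+\sum_{(i,j)\in\II{+}{0}}\mu^\VanConUP_{ij}\DVanCon ij(x^\star).
\]
The signs are: $\mu^\ECon$ free; $\mu^\ConConUP_i$ free for $i\in\II{0}{-}^\ConConUP$; $\mu^\ICon_l\ge0$; $\mu^\ConConUP_i\ge0$ for the remaining $i\in\I0^\ConConUP$; and $\mu^\VanConUP_{ij}\ge0$ on $\II{+}{0}$.

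Finally, all multipliers not appearing above are set to zero. These are $\mu^\ICon_l$ for $l\notin\I\ICon$, $\mu^\ConConUP_i$ for $i$ with $\ConCon i(x^\star)>0$ (that is, $i\in\II{+}{+}^\ConConUP\cup\II{+}{0}^\ConConUP$), and $\mu^\VanConUP_{ij}$ outside $\II{+}{0}$. The index sets partition properly because $\ConCon i(x^\star)$ is either $>0$ or $=0$ for the whole block $i$, so every $i$ lies either in $\I0^\ConConUP$ or in $\II{+}{+}^\ConConUP\cup\II{+}{0}^\ConConUP$.

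With these choices the representation is exactly $\nabla_x\Lag=0$ for the BVC Lagrangian \eqref{eq:mpbvc-Lagrangian}. The conditions \eqref{vanishing: kkt-lagrange-multiplier} then follow: feasibility is given; complementarity $\mu^\ICon_l\ICon_l(x^\star)=0$ holds because $\mu^\ICon_l=0$ off the active set; and the sign and free conditions on $\mu^\ConConUP$ and $\mu^\VanConUP$ match the cases listed.

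The only delicate point is this index bookkeeping. One must check that the case distinction in the theorem (whether some $j$ has $(i,j)\in\II{0}{-}$) coincides exactly with the split into equality and inequality rows of $\Lag(x^\star)$. One must also check that the overlap between $\II{0}{-}^\ConConUP$ and $\II{0}{0}^\ConConUP\cup\II{0}{+}^\ConConUP$ is handled consistently. Everything else is the standard GCQ-plus-Farkas argument.
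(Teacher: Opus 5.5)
Your proposal is correct and follows essentially the same route as the paper: the tangent-cone first-order condition, \ref{word:GCQ} to pass to $\Lag(x^\star)^{-}$, Farkas' lemma applied to the polyhedral description of $\Lag(x^\star)$ from Lemma~\ref{sec:linearized_cone_def}, and zero multipliers for all inactive indices. Your explicit handling of blocks $i\in\II{0}{-}^\ConConUP(x^\star)$ that also contain indices in $\II{0}{0}(x^\star)\cup\II{0}{+}(x^\star)$ corresponds to the paper placing both $-\DConCon i(x^\star)^T$ and $\DConCon i(x^\star)^T$ as rows of its matrix $A$, which likewise leaves $\mu^\ConConUP_i$ free.
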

\begin{proof}
  The proof is standard and may be found in Appendix~\ref{sec:appendix}.
\end{proof}
\begin{definition}[Strongly stationary points for MPBVC]
  \label{vanishing: def: kkt}
  A point $x^\star\in\Real^n$ is a \emph{\setword{strongly stationary}{word:strong_stationary}} (or KKT) point if there exist multipliers $\mu^{*}$ satisfying
  equations~\eqref{vanishing: kkt-lagrange-equation} and~\eqref{vanishing: kkt-lagrange-multiplier}
  from Theorem~\ref{vanishing: thm: kkt}.
\end{definition}
The reason to stick to~\ref{word:GCQ} and strong stationarity is the
equivalence between strong stationarity of MPBVC and the KKT
conditions of an NLP:
\begin{restatable}{theorem}{vanishingEquiv}
  \label{vanishing: thm: equivalence of KKT points}
  If~\ref{word:GCQ} holds at a local minimum $x^\star$ of~\eqref{eq:mpbvc}, 
  then the classic KKT conditions and strong stationarity for MPBVC from Definition~\ref{vanishing: def: kkt} are equivalent.
\end{restatable}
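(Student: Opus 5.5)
We prove both directions by an explicit translation between classic KKT multipliers of \eqref{eq:mpbvc}, taken with respect to the constraints $\ECon$, $\ICon$, $\ConCon i$ and the products $\theta_{ij}=\VanCon ij\ConCon i$, and the BVC multipliers of Definition~\ref{vanishing: def: kkt}. Write the classic Lagrangian as
\[
f-\sum_k\lambda^\ECon_k\ECon_k-\sum_l\lambda^\ICon_l\ICon_l-\sum_i\eta_i\ConCon i-\sum_{i,j}\nu_{ij}\theta_{ij},
\]
with $\lambda^\ICon,\eta,\nu\geq0$ and complementarity. The product rule gives
\[
\nabla\theta_{ij}(x^\star)=\ConCon i(x^\star)\DVanCon ij(x^\star)+\VanCon ij(x^\star)\DConCon i(x^\star).
\]
This identity is the basis of the whole argument. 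The hypothesis that GCQ holds at a local minimum enters only to guarantee that both sets of conditions are nonempty: classic KKT multipliers exist by the standard Guignard theorem, and BVC multipliers exist by Theorem~\ref{vanishing: thm: kkt}. The equivalence itself is purely algebraic.

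\textbf{Classic KKT $\Rightarrow$ strongly stationary.} Let $(\lambda,\eta,\nu)$ be classic KKT multipliers. Complementarity forces $\nu_{ij}=0$ unless $\theta_{ij}(x^\star)=0$, that is, unless $(i,j)\in\II{+}{0}\cup\I0$. It also forces $\eta_i=0$ unless $\ConCon i(x^\star)=0$. We set $\mu^\ECon=\lambda^\ECon$ and $\mu^\ICon=\lambda^\ICon$, and
\[
\mu^\VanConUP_{ij}=\nu_{ij}\ConCon i(x^\star),\qquad \mu^\ConConUP_i=\eta_i+\sum_j\nu_{ij}\VanCon ij(x^\star).
\]
By the product rule, \eqref{vanishing: kkt-lagrange-equation} then holds. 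We check the sign conditions in \eqref{vanishing: kkt-lagrange-multiplier} case by case.
\begin{itemize}
\item $\mu^\VanConUP_{ij}\ge0$ on $\II{+}{0}$, and $\mu^\VanConUP_{ij}=0$ elsewhere, because either $\ConCon i(x^\star)=0$ or $\nu_{ij}=0$.
\item $\mu^\ConConUP_i=0$ if $\ConCon i(x^\star)>0$, since then $\eta_i=0$ and every $j$ with $\nu_{ij}\neq0$ has $\VanCon ij(x^\star)=0$.
\item For $i\in\I0^\ConConUP$ without any $(i,j)\in\II{0}{-}$, all summands are nonnegative, so $\mu^\ConConUP_i\ge0$.
\item Otherwise $\mu^\ConConUP_i$ is free, so there is nothing to check.
\end{itemize}

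\textbf{Strongly stationary $\Rightarrow$ classic KKT.} This direction is the main obstacle, because the classic multipliers must be nonnegative while $\mu^\ConConUP_i$ may be negative. Given BVC multipliers $\mu$, we keep $\lambda^\ECon=\mu^\ECon$ and $\lambda^\ICon=\mu^\ICon$, and assign the remaining multipliers block by block.
\begin{itemize}
\item For $(i,j)\in\II{+}{0}$, set $\nu_{ij}=\mu^\VanConUP_{ij}/\ConCon i(x^\star)\ge0$. This reproduces the $\DVanCon ij$ term, and since $\VanCon ij(x^\star)=0$ it contributes nothing along $\DConCon i$.
\item For $i$ with $\ConCon i(x^\star)>0$, set $\eta_i=0$.
\item For $i\in\I0^\ConConUP$ with $\mu^\ConConUP_i\ge0$, set $\eta_i=\mu^\ConConUP_i$ and $\nu_{ij}=0$ for all $j$.
\item For $i\in\I0^\ConConUP$ with $\mu^\ConConUP_i<0$, the conditions in \eqref{vanishing: kkt-lagrange-multiplier} allow a negative value only when some $j_0$ has $(i,j_0)\in\II{0}{-}$. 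Set $\eta_i=0$ and $\nu_{ij_0}=\mu^\ConConUP_i/\VanCon i{j_0}(x^\star)>0$, and $\nu_{ij}=0$ for the other $j$. Then $\nu_{ij_0}\nabla\theta_{ij_0}(x^\star)=\mu^\ConConUP_i\DConCon i(x^\star)$, because $\ConCon i(x^\star)=0$.
\end{itemize}
Complementarity holds in every case, since each nonzero $\nu_{ij}$ sits on an index where $\theta_{ij}(x^\star)=0$, and each nonzero $\eta_i$ sits on an index where $\ConCon i(x^\star)=0$. Hence the classic stationarity equation reproduces \eqref{vanishing: kkt-lagrange-equation}, which completes the equivalence.

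Finally, we record that the multiplier relation is not injective. Several $(\nu,\eta)$ map to the same $\mu$, and bi-active indices in $\II{0}{0}$ contribute the zero gradient $\nabla\theta_{ij}(x^\star)=0$. This does not affect the argument, since we only need an equivalence of existence of multipliers.
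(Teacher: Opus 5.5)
Your proof is correct and follows the same route as the paper's. The forward direction uses the same translation $\mu^\ConConUP_i=\eta_i+\sum_j\nu_{ij}\VanCon{i}{j}(x^\star)$, $\mu^\VanConUP_{ij}=\nu_{ij}\ConCon{i}(x^\star)$ from the product rule, and the converse inverts it with $\nu_{ij}=\mu^\VanConUP_{ij}/\ConCon{i}(x^\star)$ on $\II{+}{0}(x^\star)$.

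Your sign bookkeeping is tighter than the paper's for genuine blocks. The paper asserts $\lambda^{\VanConUP\ConConUP}_{ij}\VanCon{i}{j}(x^\star)=0$, which fails on $\II{0}{+}(x^\star)\cup\II{0}{-}(x^\star)$; your case split on whether block $i$ meets $\II{0}{-}(x^\star)$ is what is actually needed. The paper's choices $\lambda^{\VanConUP\ConConUP}_{ij}\in[0,\mu^\ConConUP_i/\VanCon{i}{j}(x^\star)]$ on $\II{0}{+}(x^\star)$ can also be empty, or make $\lambda^\ConConUP_i$ negative when a block has several indices. Your choice of a single nonzero $\nu_{ij_0}$ on $\II{0}{-}(x^\star)$ avoids both problems.
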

\begin{proof}
  The proof may be found in Appendix~\ref{sec:appendix}.
\end{proof}
\subsection{Abadie and Guignard Type CQ for BVC}
Due to the fact that a single CC controls multiple VCs, the geometry
of the feasible set and its linearized and tangent cones is more
complicated than in the case of MPVCs. To capture the geometry
accurately, we introduce several cones based on a partition of
the bi-active controlling constraints into a subset of constraints
fixed to zero ($\ZeroIndices$) and another ($\PosIndices$), where
CCs can become positive. Formally, we let
$\PosIndices \cupdot \ZeroIndices = \II{0}{0}^\ConConUP(x^\star)$ be this
partition and set
${\PosIndicesVC\define\{(i,j)\in\II{0}{0}(x^\star)\,\mid\,i\in\PosIndices\}}$.
The partition gives rise to the following {$x^\star$-dependent} NLPs with feasible
regions contained in that of~\eqref{eq:mpbvc}:
\begin{mini}
	{x \in \Real^n}{f(x)}
	{\label{vanishing: NLP_beta}\tag{$\NLP(\PosIndices,\ZeroIndices,x^\star)$}}{}
	\addConstraint{\ECon_k(x)}{= 0,\quad k\,\in \SetN_\ECon}
	\addConstraint{\ICon_l(x)}{\geq 0,\quad l\,\in \SetN_\ICon}
	\addConstraint{\VanCon ij(x)}{\geq 0, \quad (i,j) \in \II{+}{+}(x^\star)\cup\II{0}{+}(x^\star)\cup\II{+}{0}(x^\star)}
	\addConstraint{\ConCon i(x)}{\geq 0, \quad i\, \in \II{+}{+}^\ConConUP(x^\star)\cup\II{0}{+}^\ConConUP(x^\star)\cup\II{+}{0}^\ConConUP(x^\star)}
	\addConstraint{\ConCon i(x)}{= 0, \quad i\, \in \II{0}{-}^\ConConUP(x^\star)}
	\addConstraint{\ConCon i(x)}{\geq 0, \quad i\, \in \PosIndices}
	\addConstraint{\VanCon ij(x)}{\geq 0, \quad (i,j) \in \PosIndicesVC}
	\addConstraint{\ConCon i(x)}{= 0, \quad i\, \in \ZeroIndices.}
\end{mini}
To define the linearized cone of \ref{vanishing: NLP_beta} we set
\begin{align}
  \label{vanishing: L(x)_super}
\Lag^{\super}(x^\star) \define \left\{
d\in\Real^n \biggm| \begin{array}{rl}
	\DECon_k(x^\star)^Td=0&\text{for}~k\,\in\SetN_\ECon,\\
	\DICon_l(x^\star)^Td\geq0&\text{for}~l\,\in\I\ICon(x^\star),\\
	\DConCon i(x^\star)^Td = 0&\text{for}~i\,\in\II{0}{-}^\ConConUP(x^\star),\\
	\DConCon i(x^\star)^Td \geq 0&\text{for}~i\,\in\II{0}{+}^\ConConUP(x^\star),\\
	\DVanCon ij(x^\star)^Td \geq 0&\text{for}~(i,j)\,\in\II{+}{0}(x^\star)\\
\end{array}\right\}.
\end{align}
The linearized cone of \ref{vanishing: NLP_beta} is then given by
\begin{align}
  \label{vanishing: L(x)_beta}
\Lag^{\NLP(\PosIndices,\ZeroIndices,x^\star)}(x^\star)& \define \Lag^{\super}(x^\star)\\
&\quad\cap\left\{
d\in\Real^n \biggm| \begin{array}{rl}
	\DConCon i(x^\star)^Td \geq 0&\text{for}~i\,\in\PosIndices\\
	\nabla \VanCon ij(x^\star)^Td \geq 0&\text{for}~(i,j)\,\in\PosIndicesVC\\
	\DConCon i(x^\star)^Td = 0&\text{for}~i\,\in\ZeroIndices\\
\end{array}\right\}.\notag
\end{align}
The \emph{modified linearized cone} $\Lag^\mathrm{mod}(x^\star) \define \Lag^{\NLP(\II{0}{0}(x^\star),\emptyset,x^\star)}(x^\star)$,
which appears for instance in~\cite{mpbvc_pal,diss_pal}, is not part of our examination,
as it only leads to a weak stationarity concept for MPBVC.
The cones given in~\eqref{vanishing: L(x)_beta} have a connection to the following cone:
\begin{definition}[BVC-linearized cone]\label{vanishing: def: BVC-linearized cone}
Let $x^\star$ be a feasible point of~\eqref{eq:mpbvc}.
The \emph{BVC-linearized cone} is defined as \begin{align}\label{eq:mpbvc-L(x)beta}
        \Lag^\mathrm{BVC}(x^\star)&\define\Lag^{\super}(x^\star)\\
&\hspace{-4.3em}\cap\left\{
d\in\Real^n \biggm| \begin{array}{rl}
	\ConCon i(x^\star)^Td \geq 0,&\text{for}~i\,\in\II{0}{0}^\ConConUP(x^\star)\\
(\DConCon i(x^\star)^Td)\cdot(\nabla \VanCon ij(x^\star)^Td)\geq 0,&\text{for}~(i,j)\,\in\II{0}{0}(x^\star)\\
\end{array}\right\}.\notag\end{align} 
\end{definition} 
\begin{lemma}
  \label{vanishing: lem. T(x) and L(x) beta}
	Let $x^\star$ be a feasible point of a MPBVC. Then it holds
  \begin{enumalpha}
  \item \label{vanishing: lem. T(x) and L(x) beta: caseA} $\displaystyle{\mathcal{T}(X,x^\star) = \bigcup_{\PosIndices \cupdot \ZeroIndices = \II{0}{0}^\ConConUP(x^\star)}\mathcal{T}^{\NLP(\PosIndices,\ZeroIndices,x^\star)}(x^\star)}$
  \item \label{vanishing: lem. T(x) and L(x) beta: caseB} $\displaystyle{\Lag^{\mathrm{BVC}}(x^\star) = \bigcup_{\PosIndices \cupdot \ZeroIndices = \II{0}{0}^\ConConUP(x^\star)}\Lag^{\NLP(\PosIndices,\ZeroIndices,x^\star)}(x^\star)}$
  \item \label{vanishing: lem. T(x) and L(x) beta: caseC} $\mathcal{T}(X,x^\star) \subseteq\Lag^{\mathrm{BVC}}(x^\star)$
  \end{enumalpha}
\end{lemma}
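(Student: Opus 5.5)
Note that the first constraint in the BVC cone, written $\ConCon i(x^\star)^Td \geq 0$, is clearly a typo for $\DConCon i(x^\star)^Td\geq 0$; I read it that way throughout.

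\emph{Part (a).} I would argue via feasible sets. Let $X_{\PosIndices,\ZeroIndices}$ be the feasible set of \ref{vanishing: NLP_beta}.

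First I show $X\cap U = \bigcup X_{\PosIndices,\ZeroIndices}\cap U$ for a small neighbourhood $U$ of $x^\star$. Continuity gives the sign persistence needed:
\begin{itemize}
\item for indices in $\II{+}{+}$, $\II{+}{0}$, $\II{0}{+}$, the strict signs persist near $x^\star$;
\item near $x^\star$, feasibility in $X$ forces $\VanCon ij\ge 0$ on $\II{+}{+}\cup\II{+}{0}$ (since $\ConCon i>0$);
\item for $i\in\II{0}{-}^\ConConUP$ we have $\VanCon ij<0$, so $\ConCon i\VanCon ij\ge0$ together with $\ConCon i\ge 0$ forces $\ConCon i=0$;
\item for $(i,j)\in\II{0}{+}$, $\VanCon ij>0$ holds automatically near $x^\star$.
\end{itemize}
For a point $x\in X\cap U$ I pick the partition $\ZeroIndices=\{i\in\II{0}{0}^\ConConUP:\ConCon i(x)=0\}$ and $\PosIndices$ the rest. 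For $i\in\PosIndices$ we have $\ConCon i(x)>0$, so all $\VanCon ij(x)\ge0$. Hence $x\in X_{\PosIndices,\ZeroIndices}$. The converse inclusion is direct, since every constraint of the NLP implies the MPBVC constraints.

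The tangent cone is local, and the tangent cone of a finite union equals the union of the tangent cones. For the nontrivial direction, a sequence realizing $d$ has a subsequence lying in a single piece, because there are finitely many partitions. This gives (a).

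\emph{Part (b).} I argue pointwise on $d$. If $d\in\Lag^{\NLP(\PosIndices,\ZeroIndices,x^\star)}(x^\star)$, then $d\in\Lag^{\super}(x^\star)$. Moreover $\DConCon i^Td\ge 0$ on all of $\II{0}{0}^\ConConUP$, with equality on $\ZeroIndices$. The products $(\DConCon i^Td)(\DVanCon ij^Td)$ are therefore zero on $\ZeroIndices$, and a product of two nonnegatives on $\PosIndicesVC$. So $d\in\Lag^\mathrm{BVC}(x^\star)$.

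Conversely, for $d\in\Lag^\mathrm{BVC}(x^\star)$ I set $\ZeroIndices=\{i\in\II{0}{0}^\ConConUP:\DConCon i(x^\star)^Td=0\}$ and $\PosIndices$ the remaining indices, where $\DConCon i^Td>0$. The product condition then yields $\DVanCon ij^Td\ge0$ for every $(i,j)\in\PosIndicesVC$. Thus $d$ lies in the corresponding NLP cone.

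\emph{Part (c).} Each NLP satisfies the standard inclusion $\mathcal{T}^{\NLP}\subseteq\Lag^{\NLP}$ (a first-order Taylor argument). Taking the union over partitions and applying (a) and (b) gives $\mathcal{T}(X,x^\star)\subseteq\Lag^\mathrm{BVC}(x^\star)$.

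\emph{Main obstacle.} The delicate step is the localization in (a). The pieces $X_{\PosIndices,\ZeroIndices}$ drop the sign-persisting constraints, so one must check that $\mathcal{T}(X,x^\star)$ coincides with the tangent cone of $X\cap U$. This holds because $\mathcal{T}$ only depends on the set near $x^\star$. One must also check that the NLP constraint $\VanCon ij\ge0$ for $(i,j)\in\II{0}{+}$ is inactive near $x^\star$, so it does not shrink the set there.
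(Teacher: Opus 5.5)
Your proof is correct and follows essentially the same route as the paper. In (a) you use a pigeonhole argument over the finitely many partitions together with the inclusion of the NLP feasible sets in $X$; in (b) you build the partition explicitly from $d$; and you obtain (c) from $\mathcal{T}^{\NLP}\subseteq\Lag^{\NLP}$ combined with (a) and (b). The only differences are in which valid partition you pick. You put $i$ into $\ZeroIndices$ when $\ConCon i(x)=0$ (resp.\ $\DConCon i(x^\star)^Td=0$), whereas the paper does so when some bi-active $j$ has $\VanCon ij(x^{(k)})<0$ (resp.\ $\DVanCon ij(x^\star)^Td<0$). Both choices work, and you also spell out the cone inclusion in (b) that the paper leaves as "clear".
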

\begin{proof}
  This proof is analogously to the proof of~\cite[Lemma~2.4]{on_the_abadie}. For completeness we state the proof for MPBVC.
  We start with~\ref{vanishing: lem. T(x) and L(x) beta: caseA}.
   Assume $d \in \mathcal{T}(X,x^\star)$, then there exists a sequence
    $\{x^{(k)}\}$ of points feasible for MPBVC such that $x^{(k)} \to x^\star$
    for $k \to \infty$ which together with a sequence $\{t^{(k)}\}$ of
    step lengths converges to $d$. Our goal is to identify a subsequence
    which is contained entirely in the feasible set of
    $\NLP(\PosIndices,\ZeroIndices,x^\star)$ for a suitably chosen partition.
    Due to the feasibility of the iterates $x^{k}$ it holds that
    $\ConCon i(x^{(k)}) \geq 0$. From the continuity of the
    constraint functions it follows that starting from a sufficiently
    large index $K$ it holds that
    $\ConCon i(x^{(k)})=0$ for $i \in \II{0}{-}^\ConConUP(x^\star)$ and
    $\VanCon ij (x^{(k)}) > 0$ for
    $(i,j) \in \II{+}{+}(x^\star) \cup \II{0}{+}(x^\star)$
    for all $k \geq K$ (we can assume that $K = 1$).
    To find a suitable partition of the bi-active indices at $x^\star$ we set  for all $k\in\Nat$
    \begin{align*}
      \mathcal{B}^\ConConUP_{0,k}& \define \{i\,\mid\,\text{there exists } (i,j) \in \II{0}{0}(x^\star)~\text{with}~ (i, j) \in \II{0}{-}(x^{k})\}, \\
      \mathcal{B}^\ConConUP_{+,k}& \define \II{0}{0}^\ConConUP(x^\star)  \setminus\mathcal{B}^\ConConUP_{0,k}.
    \end{align*}
   We know that the number of partitions of $\II{0}{0}(x^\star)$ is finite. We can therefore choose an infinite
    subset $\mathcal{K}\subset\Nat$, such that $\widebar{\PosIndices} \define\mathcal{B}^\ConConUP_{+,k}$ and $\widebar{\mathcal{B}^\ConConUP_{0}}\define\mathcal{B}^\ConConUP_{0,k}$
    for all $k \in \mathcal{K}$. We see that $x^{(k)}$ is feasible for all $k\in\mathcal{K}$ for $\NLP(\widebar\PosIndices,\widebar\ZeroIndices)$.
    Therefore, the subsequence based on $\mathcal{K}$ shows that $d$ is contained in $\mathcal{T}^{\NLP(\widebar\PosIndices,\widebar\ZeroIndices)}(x^\star)$.

    Conversely, for a value $d$ in the union of tangential cones there exists a given partition $(\PosIndices,\ZeroIndices)$
    such that $d\in\mathcal{T}^{\NLP(\PosIndices,\ZeroIndices,x^\star)}(x^\star)$ with corresponding sequences $\{x^{(k)}\},\{t^{(k)}\}$. Since the feasible set
    of $\NLP(\PosIndices,\ZeroIndices,x^\star)$ is contained within that of~\eqref{eq:mpbvc}, the 
    $x^{(k)}$ are all feasible for \eqref{eq:mpbvc}, showing that $d \in \mathcal{T}(X,x^\star)$.

  We carry on with~\ref{vanishing: lem. T(x) and L(x) beta: caseB}.
    Assume $d\in\Lag^\mathrm{BVC}(x^\star)$ and take the partition
    \begin{align*}
      \ZeroIndices & \define \{i\,\mid\,\text{there exists }(i,j)\in\II{0}{0}(x^\star)~\st~ \DVanCon ij(x^\star)^Td<0\},\\
      \PosIndices & \define \II{0}{0}^\ConConUP(x^\star) \setminus\ZeroIndices.
    \end{align*} From the product $(\DConCon i(x^\star)^Td)\cdot(\nabla \VanCon ij(x^\star)^Td)\geq0$ and $\DConCon i(x^\star)^Td\geq0$ for all $(i,j)\in\II{0}{0}(x^\star)$
    we obtain with the choice of $\ZeroIndices$ that $\DConCon i(x^\star)^Td=0$ for all $i\in\ZeroIndices$. For $\PosIndices$ we obtain $\DConCon i(x^\star)^Td\geq0$
    and $\nabla \VanCon ij(x^\star)^Td\geq0$ for all $(i,j)\in\PosIndicesVC$.
 The converse direction is clear by definitions of the respective cones.
 For~\ref{vanishing: lem. T(x) and L(x) beta: caseC} we know that $\mathcal{T}^{\NLP(\PosIndices,\ZeroIndices,x^\star)}(x^\star)\subseteq\Lag^{\NLP(\PosIndices,\ZeroIndices,x^\star)}(x^\star)$
  for all partitions $\PosIndices \cupdot \ZeroIndices = \II{0}{0}^\ConConUP(x^\star)$. If follows
  \begin{align*}
   	\mathcal{T}(X,x^\star) \overset{\ref{vanishing: lem. T(x) and L(x) beta: caseA}}{=} & \bigcup_{\PosIndices \cupdot \ZeroIndices = \II{0}{0}^\ConConUP(x^\star)} \mathcal{T}^{\NLP(\PosIndices,\ZeroIndices,x^\star)}(x^\star)
    \\ \subseteq &
    \bigcup_{\PosIndices \cupdot \ZeroIndices = \II{0}{0}^\ConConUP(x^\star)} \Lag^{\NLP(\PosIndices,\ZeroIndices,x^\star)}(x^\star) \overset{(b)}{=}\Lag^{\mathrm{BVC}}(x^\star).
  \end{align*}
  This is our assertion.
\end{proof}
Note that $\Lag^\mathrm{BVC}(x^\star)\subset\Lag(x^\star)$ and equality holds when $\II{0}{0}=\emptyset$ and shows the importance of the bi-active indices.
\begin{definition}\label{vanishing: def: BVC-ACQ}
	Let $x^\star$ be a feasible point of~\eqref{eq:mpbvc}. We say the
  \begin{enumalpha}
  \item \emph{BVC-Abadie constraint qualification} (\setword{BVC-ACQ}{word:BVC_ACQ}) holds at  $x^\star$ if
    \begin{equation*}
      \Lag^\mathrm{BVC}(x^\star)=\mathcal{T}(X,x^\star).
    \end{equation*}
  \item \emph{BVC-Guignard constraint qualification} (\setword{BVC-GCQ}{word:BVC_GCQ}) holds at  $x^\star$ if
    \begin{equation*}
      \Lag^\mathrm{BVC}(x^\star)^{-}=\mathcal{T}(X,x^\star)^{-}.
    \end{equation*}
    \end{enumalpha}
\end{definition}
\begin{lemma}\label{vanishing: lem: BVC-ACQ implies BVC-GCQ}
	Let $x^\star$ be a feasible point. The following statements hold
  \begin{enumalpha}
  \item \ref{word:BVC_ACQ} implies \ref{word:BVC_GCQ}.
  \item
    If~\ref{word:ACQ} holds for~\eqref{vanishing: NLP_beta} for every partition $\PosIndices \cupdot \ZeroIndices = \II{0}{0}^\ConConUP(x^\star)$, then~\ref{word:BVC_ACQ} holds.
  \item
    If~\ref{word:GCQ} holds for~\eqref{vanishing: NLP_beta} for every partition $\PosIndices \cupdot \ZeroIndices = \II{0}{0}^\ConConUP(x^\star)$, then~\ref{word:BVC_GCQ} holds.
  \end{enumalpha}
\end{lemma}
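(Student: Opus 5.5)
All three parts follow from Lemma~\ref{vanishing: lem. T(x) and L(x) beta} together with elementary properties of polar cones. The plan is to handle them in order and reduce each to a set identity or to a standard fact about polars of unions.

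For (a), the argument is immediate from the definition: if $\Lag^\mathrm{BVC}(x^\star)=\mathcal{T}(X,x^\star)$, then applying the polar operation to both sides gives $\Lag^\mathrm{BVC}(x^\star)^{-}=\mathcal{T}(X,x^\star)^{-}$. This is exactly \ref{word:BVC_GCQ}.

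For (b), assume that for every partition $\PosIndices \cupdot \ZeroIndices = \II{0}{0}^\ConConUP(x^\star)$ we have $\mathcal{T}^{\NLP(\PosIndices,\ZeroIndices,x^\star)}(x^\star)=\Lag^{\NLP(\PosIndices,\ZeroIndices,x^\star)}(x^\star)$. Taking the union over all (finitely many) partitions, part~\ref{vanishing: lem. T(x) and L(x) beta: caseA} identifies the left-hand union with $\mathcal{T}(X,x^\star)$. Part~\ref{vanishing: lem. T(x) and L(x) beta: caseB} identifies the right-hand union with $\Lag^\mathrm{BVC}(x^\star)$. Hence \ref{word:BVC_ACQ} holds.

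For (c), the key tool is the identity $\bigl(\bigcup_\alpha M_\alpha\bigr)^{-}=\bigcap_\alpha M_\alpha^{-}$, valid for any family of sets. To verify it, note that $d$ lies in the polar of the union if and only if $\langle d,x\rangle\le 0$ for every $x$ in every $M_\alpha$, which is the same as $d\in M_\alpha^{-}$ for all $\alpha$. Using parts~\ref{vanishing: lem. T(x) and L(x) beta: caseA} and~\ref{vanishing: lem. T(x) and L(x) beta: caseB} together with the hypothesis $\mathcal{T}^{\NLP(\PosIndices,\ZeroIndices,x^\star)}(x^\star)^{-}=\Lag^{\NLP(\PosIndices,\ZeroIndices,x^\star)}(x^\star)^{-}$ for each partition, we obtain
\[
\mathcal{T}(X,x^\star)^{-}=\bigcap_{\PosIndices \cupdot \ZeroIndices}\mathcal{T}^{\NLP(\PosIndices,\ZeroIndices,x^\star)}(x^\star)^{-}=\bigcap_{\PosIndices \cupdot \ZeroIndices}\Lag^{\NLP(\PosIndices,\ZeroIndices,x^\star)}(x^\star)^{-}=\Lag^\mathrm{BVC}(x^\star)^{-}.
\]
This is \ref{word:BVC_GCQ}.

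The only point requiring care is the bookkeeping in Lemma~\ref{vanishing: lem. T(x) and L(x) beta}: both unions must run over the same index family of partitions, and the sets $\mathcal{T}^{\NLP(\cdot)}$ and $\Lag^{\NLP(\cdot)}$ must be the standard tangent and linearized cones of \ref{vanishing: NLP_beta} at $x^\star$, which is how ACQ and GCQ for that NLP are defined. Since the lemma supplies exactly these identities, I expect no genuine obstacle; the polar-of-union identity is the one step worth stating explicitly.
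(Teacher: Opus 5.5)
Your proof is correct: part (a) follows by taking polars of the BVC-ACQ identity, part (b) is the union identities of Lemma~\ref{vanishing: lem. T(x) and L(x) beta}(a),(b), and part (c) adds $\bigl(\bigcup_\alpha M_\alpha\bigr)^{-}=\bigcap_\alpha M_\alpha^{-}$. The paper states this lemma without an explicit proof, but this is the same reasoning it relies on when it derives \eqref{vanishing: Tx*=Schnitt} in the proof of Theorem~\ref{vanishing: thm: BVC-GCQ+partialLICQ}.
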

\begin{lemma}\label{vanishing: cor: BVC-ACQ = ACQ}
  If $\II{0}{0}(x^\star)=\emptyset$ holds for a feasible point $x^\star$ of~\eqref{eq:mpbvc}, then~\ref{word:BVC_ACQ} is equivalent to~\ref{word:ACQ} and~\ref{word:BVC_GCQ} is equivalent to~\ref{word:GCQ}.
\end{lemma}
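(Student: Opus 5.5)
The statement concerns Lemma~\ref{vanishing: cor: BVC-ACQ = ACQ}. Both equivalences follow once we show that $\Lag^\mathrm{BVC}(x^\star)=\Lag(x^\star)$ whenever $\II{0}{0}(x^\star)=\emptyset$. The tangent cone $\mathcal{T}(X,x^\star)$ is the same object in both definitions, since it depends only on the feasible set $X$ of~\eqref{eq:mpbvc}. Hence \ref{word:BVC_ACQ}, i.e.\ $\Lag^\mathrm{BVC}(x^\star)=\mathcal{T}(X,x^\star)$, is literally the statement $\Lag(x^\star)=\mathcal{T}(X,x^\star)$, which is \ref{word:ACQ}. Likewise, equality of the cones gives equality of their polars, $\Lag^\mathrm{BVC}(x^\star)^-=\Lag(x^\star)^-$, so \ref{word:BVC_GCQ} and \ref{word:GCQ} coincide.

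To prove the cone equality, I would compare the explicit description of $\Lag(x^\star)$ from Lemma~\ref{sec:linearized_cone_def} with the definition~\eqref{eq:mpbvc-L(x)beta} of $\Lag^\mathrm{BVC}(x^\star)$. If $\II{0}{0}(x^\star)=\emptyset$, then $\II{0}{0}^\ConConUP(x^\star)=\emptyset$ as well. The conditions in the second set of~\eqref{eq:mpbvc-L(x)beta} are indexed by these empty sets, so that set is all of $\Real^n$ and $\Lag^\mathrm{BVC}(x^\star)=\Lag^{\super}(x^\star)$.

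Next I compare $\Lag^{\super}(x^\star)$ in~\eqref{vanishing: L(x)_super} with~\eqref{eq:mpbvc-L(x)} row by row:
\begin{itemize}
\item The equality constraints coincide.
\item The active inequality constraints $\ICon_l$, $l\in\I\ICon(x^\star)$, coincide.
\item The conditions $\DConCon i(x^\star)^Td=0$ for $i\in\II{0}{-}^\ConConUP(x^\star)$ coincide.
\item The conditions $\DVanCon ij(x^\star)^Td\ge0$ for $(i,j)\in\II{+}{0}(x^\star)$ coincide.
\item The only row that differs is $\DConCon i(x^\star)^Td\ge0$, which in $\Lag(x^\star)$ is indexed by $\II{0}{0}^\ConConUP(x^\star)\cup\II{0}{+}^\ConConUP(x^\star)$. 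Once the bi-active set is empty, this index set reduces to $\II{0}{+}^\ConConUP(x^\star)$, exactly as in $\Lag^{\super}(x^\star)$.
\end{itemize}
Hence $\Lag(x^\star)=\Lag^{\super}(x^\star)=\Lag^\mathrm{BVC}(x^\star)$.

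The only step needing any care is a bookkeeping point about indices $i$ that lie in both $\II{0}{-}^\ConConUP(x^\star)$ and $\II{0}{+}^\ConConUP(x^\star)$. Such an index can occur, since $\ConCon i(x^\star)=0$ while some of its $\VanCon ij(x^\star)$ are negative and others positive. For such $i$ both cones impose the equality together with the (then redundant) inequality, so the sets agree. There is no genuine obstacle; the lemma is a direct consequence of the definitions.
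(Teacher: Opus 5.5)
Your proposal is correct and follows essentially the paper's argument. Both rest on the identity $\Lag^{\mathrm{BVC}}(x^\star)=\Lag(x^\star)$ when $\II{0}{0}(x^\star)=\emptyset$: you verify this identity row by row, while the paper asserts it and then closes a sandwich of inclusions using $\mathcal{T}(X,x^\star)\subseteq\Lag(x^\star)$. Your observation that the cone equality makes the two conditions literally identical, so no sandwich is needed, is a slightly cleaner finish.
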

\begin{proof}
		Let $x^\star$ be a feasible point with $\II{0}{0}(x^\star) =\emptyset$. We obtain the inclusions\begin{align*}
			&\Lag(x^\star) = \Lag^\mathrm{BVC}(x^\star)\subseteq \mathcal{T}(X,x^\star)\subseteq\Lag(x^\star) \text{ for ACQ},\\
			&\Lag(x^\star)^{-} = \Lag^\mathrm{BVC}(x^\star)^{-}\supseteq \mathcal{T}(X,x^\star)^{-}\supseteq\Lag(x^\star)^{-} \text{ for GCQ},
		\end{align*}which leads to equalities.
  \end{proof}
Examples where~\ref{word:ACQ} is violated with $\II{0}{0}(x^\star)\neq\emptyset$ can be found in~\cite{mpvc}.
In general it is shown in~\cite[Section~5]{mpvc} that~\ref{word:BVC_ACQ} is weaker then~\ref{word:ACQ} already for MPVCs.\\
\subsection{Mangasarian-Fromovitz and Linear Independence Type CQ for BVC}
If we would like to apply the classic \emph{Mangasarian-Fromovitz Constraint Qualification} (MFCQ) \cite{mfcq}
we face the problem that due to the product structure we cannot guarantee $\nabla\big(\ConCon i(x)\VanCon ij(x)\big)^Td>0$ 
for all active indices. First of all, for the indices contained in $\II{0}{-}(x)$ 
we know that the $\ConCon{i}$ are active and we need to treat them as equality constraint. 
Secondly, the index set $\II{0}{0}(x)$ may have indices $(i,j)$ with $\VanCon ij(x)^Td<0$ for a given direction $d$. 
So we need to distinguish in every partition of $\PosIndices\cupdot\ZeroIndices=\II{0}{0}^\ConConUP(x)$ between indices 
where we could move into the interior of feasible set ($\PosIndices)$ and 
indices where the vanishing constraint may vanish ($\ZeroIndices$). 
\begin{definition}[BVC-MFCQ]\label{vanishing: def: BVC-MFCQ}
  Let $x^\star\in\Real^n$ be a feasible point of~\eqref{eq:mpbvc}. 
  We say that \emph{BVC-Mangasarian-Fromovitz Constraint Qualification}
  (\setword{BVC-MFCQ}{word:BVC_MFCQ})
  is satisfied at $x^\star$ if for every partition $\PosIndices\cupdot\ZeroIndices=\II{0}{0}^\ConConUP(x^\star)$ 
  MFCQ holds for \ref{vanishing: NLP_beta}. Precisely, for every partition $\PosIndices\cupdot\ZeroIndices=\II{0}{0}^\ConConUP(x^\star)$  the gradients 
  \begin{align*}
    \DECon_k(x^\star)\text{ for }k\in\SetN_\ECon,\enspace\DConCon i(x^\star) \text{ for }i \in \II{0}{-}^\ConConUP(x^\star)\cup\ZeroIndices
  \end{align*} are linearly independent and a vector $d\in\Real^n$ exists such that \begin{align*}
    \DECon_k(x^\star)^Td = 0, &~ k\, \in\SetN_\ECon,\\
    \DICon_l(x^\star)^Td > 0, & ~l\, \in\I\ICon(x^\star),\\
    \DConCon i(x^\star)^Td = 0,& ~ i\, \in\II{0}{-}^\ConConUP(x^\star)\cup\ZeroIndices,\\
    \DConCon i(x^\star)^Td > 0,&  ~i\,\in\I{0}^\ConConUP(x^\star) \setminus (\II{0}{-}^\ConConUP(x^\star)\cup\ZeroIndices),\\
    \DVanCon{i}{j}(x^\star)^Td > 0,&~  (i,j)\,\in \II{+}{0}(x^\star)\cup\PosIndicesVC.
  \end{align*}
\end{definition}
Note that this definition differs from the well-known standard MFCQ~\cite{mfcq}
for general NLPs in the sense that some of the inequalities are handled as equality constraints.
This is due to the more complicated structure of~\eqref{eq:mpbvc}. 
Indeed, the \ref{word:BVC_MFCQ} is a collection of the MFCQ conditions for all possible \ref{vanishing: NLP_beta}.
In order for a (nonzero) value of $d$
to exist, it has to hold that $\#\SetN_\ECon + \#\II{0}{-}^\ConConUP(x^\star) + \#\II{0}{0}^\ConConUP(x^\star) < n$ to cover all partitions of $\II{0}{0}^\ConConUP(x^\star)$.
\begin{lemma}\label{vanishing: lem: BVC-MFCQ}
    Let $x^\star$ be feasible point of~\eqref{eq:mpbvc} satisfying~\ref{word:BVC_MFCQ}.
    Then there exist a $\epsilon>0,\,d\in\Real^n$, and a continuously differentiable function $x:(-\epsilon,\epsilon)\to\Real^n$ such that $x(0)=x^\star$, $x'(0)=d$, and $x(t)$ is feasible for all $t\in[0,\epsilon)$. 
\end{lemma}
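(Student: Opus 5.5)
We follow the classical construction for MFCQ in standard NLPs (as in the proof that MFCQ implies ACQ), applied to a single auxiliary problem \ref{vanishing: NLP_beta}. Since \ref{word:BVC_MFCQ} holds for every partition, we may pick the particular partition $\PosIndices\define\II{0}{0}^\ConConUP(x^\star)$, $\ZeroIndices\define\emptyset$. Any partition would do, because the feasible set of each \ref{vanishing: NLP_beta} lies in that of \eqref{eq:mpbvc}. For this partition, \ref{word:BVC_MFCQ} provides linearly independent gradients $\DECon_k(x^\star)$, $k\in\SetN_\ECon$, and $\DConCon i(x^\star)$, $i\in\II{0}{-}^\ConConUP(x^\star)$. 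It also provides a vector $d$ with the stated equalities and strict inequalities. This $d$ is the one claimed in the lemma.

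Collect the equality constraints into $E(x)\define\big(\ECon(x),\ConCon{\II{0}{-}^\ConConUP}(x)\big)\in\Real^m$, so that $A\define\nabla E(x^\star)^T$ has full row rank $m$. We seek $x(t)=x^\star+td+A^Tu(t)$ with $u(t)\in\Real^m$ determined by $\Phi(t,u)\define E(x^\star+td+A^Tu)=0$. We have $\Phi(0,0)=E(x^\star)=0$ by feasibility, and $\partial_u\Phi(0,0)=AA^T$ is invertible. The implicit function theorem ($E\in C^2$) therefore yields a $C^1$ map $u$ on some $(-\epsilon_0,\epsilon_0)$ with $u(0)=0$. Differentiating gives $AA^Tu'(0)=-Ad=0$, hence $u'(0)=0$. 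Thus $x(0)=x^\star$ and $x'(0)=d$, and all equality constraints $\ECon$ and $\ConCon i$, $i\in\II{0}{-}^\ConConUP(x^\star)$, hold along the curve. Consequently the products $\VanCon ij\ConCon i=0$ for these $i$.

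It remains to check the inequality constraints for small $t\ge 0$ by first-order Taylor expansion. Take any constraint $c\in\{\ICon_l,\ConCon i,\VanCon ij\}$ that is active at $x^\star$ and for which MFCQ gives $\nabla c(x^\star)^Td>0$. Then $c(x(t))=t\nabla c(x^\star)^Td+o(t)>0$ for small $t>0$. This applies to $l\in\I\ICon(x^\star)$, to $i\in\I0^\ConConUP(x^\star)\setminus\II{0}{-}^\ConConUP(x^\star)$ (which includes all of $\II{0}{0}^\ConConUP\cup\II{0}{+}^\ConConUP$), and to $(i,j)\in\II{+}{0}(x^\star)\cup\PosIndicesVC$. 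Inactive constraints stay strictly satisfied by continuity: $\ICon_l>0$ for inactive $l$, $\ConCon i>0$ for $i\in\II{+}{\cdot}^\ConConUP$, and $\VanCon ij>0$ on $\II{+}{+}\cup\II{0}{+}$.

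We then verify the products $\VanCon ij\ConCon i\ge0$ case by case:
\begin{itemize}
\item if $\ConCon i(x^\star)>0$, then $\ConCon i(x(t))>0$, and every $\VanCon ij$ is positive or becomes positive;
\item if $i\in\II{0}{-}^\ConConUP(x^\star)$, then $\ConCon i\equiv0$;
\item otherwise $\ConCon i(x(t))>0$, and every $j$ of the block lies in $\II{0}{+}\cup\II{0}{0}$, so $\VanCon ij(x(t))>0$.
\end{itemize}
The third case uses that an index $i$ outside $\II{0}{-}^\ConConUP$ has no $j$ with $\VanCon ij<0$. Taking the minimum of the finitely many thresholds gives $\epsilon$.

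The main obstacle is the interaction within a block. The same $i$ may lie in both $\II{0}{-}^\ConConUP$ and $\II{0}{0}^\ConConUP$ (or $\II{0}{+}^\ConConUP$), since these are projections of pair sets. Such an $i$ must be handled as an equality, and we must make sure the MFCQ sign requirements remain consistent. Definition~\ref{vanishing: def: BVC-MFCQ} already resolves this by removing $\II{0}{-}^\ConConUP$ from the strict-inequality set. Accordingly, we include $\II{0}{-}^\ConConUP$ in the implicit-function system and apply the strict inequalities only to the remaining controlling indices.
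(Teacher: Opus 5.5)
Your proof is correct and follows essentially the paper's route: the same implicit-function curve $x(t)=x^\star+td+J_z(x^\star)^Ty(t)$ with $y'(0)=0$ (because $J_z(x^\star)d=0$), followed by first-order sign checks of the active inequalities and of the products $\ConCon i\VanCon ij$. The one real difference is that you fix the partition with $\ZeroIndices=\emptyset$, which is enough for the existential statement. The paper keeps the partition arbitrary because Theorem~\ref{vanishing: thm: BVC-MFCQ implies BVC-ACQ} applies the lemma to a prescribed partition and a prescribed direction $d_\delta$. Also, your parenthetical should say that $\I0^\ConConUP(x^\star)\setminus\II{0}{-}^\ConConUP(x^\star)$ is \emph{contained in} $\II{0}{0}^\ConConUP\cup\II{0}{+}^\ConConUP$, not that it includes this set, as your last paragraph itself recognizes.
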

\begin{proof}
    This proof is based on the Implicit Function Theorem (IFT) and modified from~\cite{mpvc,mpbvc_pal}.
    In the first step we use the IFT to show the existence of a continuously differentiable function.
    We then show that $x(t)$ is feasible for all $t\in[0,\epsilon)$.
    Let a partition $\PosIndices\cupdot\ZeroIndices=\II{0}{0}^\ConConUP(x^\star)$ be given and fixed.\\

    \textsc{Step 1.} We define the mapping $z:\Real^n\to\Real^q$ with \newline $q\define\#\SetN_\ECon+
    \#\II{0}{-}^\ConConUP(x^\star)+ \#\ZeroIndices$ and
    \begin{align*}
        z(x)\define\begin{pmatrix}
          \ECon(x) \\
          \ConCon{\II{0}{-}^\ConConUP(x^\star)}(x) \\
          \ConCon{\ZeroIndices}(x)\\
        \end{pmatrix}.
    \end{align*}We denote with $z_j$ the $j$-th component of the mapping $z$. 
    Let further $t \in \Real, y\in\Real^q$, $d\in\Real^n$ from~\ref{word:BVC_MFCQ} and 
    define $\hat{h}: \Real^{q} \times \Real \to \Real^q$ with
    \begin{align*}
      \hat{h}_j(y,t)\define z_j(x^\star + td + J_z(x^\star)^Ty) \text{ for all }j=1,\dots,q,
    \end{align*}where $J_z(x^\star)$ denotes the Jacobian of $z$ evaluated at $x^\star$.
    Then the (nonlinear) system $\hat{h}(y,t)=0$ has a solution $(y^\star,t^\star)\define(0,0)$ and the
    partial Jacobian $\frac{\partial}{\partial y}\hat{h}(0,0)=J_z(x^\star)J_z(x^\star)^T$ has full 
    rank since $x^\star$ satisfies~\ref{word:BVC_MFCQ}.
    It follows from the IFT that there exists an $\epsilon>0$ and a continuously differentiable function
    $y:(-\epsilon,\epsilon)\to\Real^q$ such 
    that $y(0)=0$ and $\hat{h}(y(t),t)=0$ for all $t\in(-\epsilon,\epsilon)$. 
    Its derivative is given by \begin{align*}
        \frac{d}{dt} y(t) = -\left(\frac{\partial}{\partial y}\hat{h}(y(t),t)\right)^{-1}\frac{\partial}{\partial t}\hat{h}(y(t),t)\;\text{ for all }t\in(-\epsilon,\epsilon).
    \end{align*} Moreover, we obtain with the chain rule
    \begin{align*}
    \frac{d}{dt} y(0) = -\left(\frac{\partial}{\partial y}\hat{h}(y(0),0)\right)^{-1}\frac{\partial}{\partial t}\hat{h}(y(0),0) 
    = -\left(\frac{\partial}{\partial y}\hat{h}(y(0),0)\right)^{-1} J_z(x^\star) d =0,
    \end{align*}
    since $J_z(x^\star) d=0$ from~\ref{word:BVC_MFCQ}. 
    We set \begin{align*}
        x(t)\define x^\star + td + J_z(x^\star)^Ty(t),
    \end{align*} which is by definition continuously differentiable on the interval $(-\epsilon,\epsilon)$. 
    Furthermore it holds that
    \begin{align*}
        x(0) &= x^\star + 0\cdot d + J_z(x^\star)^Ty(0) = x^\star + J_z(x^\star)^T\cdot 0=x^\star \text{, and}\\
         \frac{d}{dt}x(0)&= d+ J_z(x^\star)^T  \frac{d}{dt}y(0) = d + J_z(x^\star)^T\cdot 0 = d.
    \end{align*}
    
    \textsc{Step 2.} We note that $0 = \hat{h}(y(t),t) = z(x(t))$ and by definition of $z$ it follows that \begin{equation}
      \label{vanishing:\ConCon i(x(t))=0}
      \begin{aligned}        
        \ECon_k(x(t)) &=0 \text{ for all } k\in\SetN_\ECon\text{ and } t\in (-\epsilon,\epsilon) \text{, and}\\
       \ConCon i(x(t)) &=0 \text{ for all }i\in\II{0}{-}^\ConConUP(x^\star)\cup\ZeroIndices \text{ and } t\in (-\epsilon,\epsilon).
      \end{aligned}
    \end{equation} Furthermore, by continuity it holds that $\ConCon i(x(t))\geq0$ 
    for all $i \in \II{+}{0}^\ConConUP(x^\star) \cup \II{+}{+}^\ConConUP(x^\star)$ 
    and sufficiently small $t>0$.

    We define $\phi(t)\define\ConCon i(x(t))$ for the indices ${i \in \I{0}^\ConConUP(x^\star) \setminus (\II{0}{-}^\ConConUP(x^\star)\cup\ZeroIndices)}$. 
    It follows that $\frac{d}{dt} \phi(t) = \nabla_x\ConCon i(x(t))^T \frac{d}{dt} x(t)$ and so 
    $\frac{d}{dt} \phi(0) = \nabla_x\ConCon i(x^\star)^Td>0$ by~\ref{word:BVC_MFCQ}, i.e.
     $\ConCon i(x(t))\geq0$ for $i\in\SetN_\ConConUP$ and sufficiently small $t\geq0$. 
     Analogously we obtain $\ICon_l(x(t))\geq0$ for all $l\in\SetN_\ICon$ and $\VanCon{i}{j}(x(t))\geq0$ for $(i,j)\in\II{+}{0}(x^\star)\cup\PosIndicesVC$.
     It remains to show that $\varphi(x(t))\define\ConCon i(x(t))\VanCon ij(x(t))\geq 0$ for $(i,j)\in\SetN_\ConConUP\times\SetN_\VanConUP$. 
     For $\II{0}{-}(x^\star)\cup\ZeroIndices$ it is satisfied by~\eqref{vanishing:\ConCon i(x(t))=0}. Let in the following $t>0$ be sufficiently small.
     By continuity, $\varphi(x(t))\geq0$ for $(i,j)\in\II{+}{+}(x^\star)$. 
     Let $(i,j)\in\II{0}{+}(x^\star)\cup\II{+}{0}(x^\star)$ and $i\not\in\II{0}{-}^\ConConUP(x^\star)\cup\ZeroIndices$. 
     We obtain \begin{align*}
        \frac{d}{dt}\varphi(t) = \VanCon ij(x(t))\DConCon i(x(t))^Tx'(t)+\ConCon i(x(t))\DVanCon ij(x(t))^Tx'(t) 
    \end{align*} and so \begin{align*}
        \frac{d}{dt}\varphi(0) = \VanCon ij(x^\star)\DConCon i(x^\star)^Td+\ConCon i(x^\star)\DVanCon ij(x^\star)^Td > 0,
    \end{align*}since~\ref{word:BVC_MFCQ} holds. From $\varphi(0) = 0$ we obtain $\varphi(t)>0$ for sufficiently small $t>0$.
    The case $(i,j) \in \PosIndicesVC$ remains. We know $\frac{d}{dt}\varphi(0)=0$, 
    but $\ConCon{i}(x(t))\geq0$ and $\VanCon{i}{j}(x(t))\geq0$ are feasible by~\ref{word:BVC_MFCQ}.
    It follows, that $\varphi(t)\geq0$.
    We conclude that $\ConCon i(x(t))\VanCon ij(x(t))\geq 0$ for sufficiently small $t\geq0$.
\end{proof}
The following theorem is adapted from~\cite[Theorem~2]{mpvc}.
\begin{theorem}\label{vanishing: thm: BVC-MFCQ implies BVC-ACQ}
 Let $x^\star$ be a feasible point of~\eqref{eq:mpbvc} where~\ref{word:BVC_MFCQ} holds.
 Then~\ref{word:BVC_ACQ} holds at $x^\star$.
\end{theorem}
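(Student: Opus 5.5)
We need to show $\Lag^{\mathrm{BVC}}(x^\star)=\mathcal{T}(X,x^\star)$. By Lemma~\ref{vanishing: lem. T(x) and L(x) beta}\ref{vanishing: lem. T(x) and L(x) beta: caseC} we already have $\mathcal{T}(X,x^\star)\subseteq\Lag^{\mathrm{BVC}}(x^\star)$, so only the reverse inclusion needs proof. We use the decompositions in parts \ref{vanishing: lem. T(x) and L(x) beta: caseA} and \ref{vanishing: lem. T(x) and L(x) beta: caseB} of the same lemma. It therefore suffices to show, for every partition $\PosIndices\cupdot\ZeroIndices=\II{0}{0}^\ConConUP(x^\star)$, that
\[
\Lag^{\NLP(\PosIndices,\ZeroIndices,x^\star)}(x^\star)\subseteq\mathcal{T}^{\NLP(\PosIndices,\ZeroIndices,x^\star)}(x^\star).
\]
In other words, \ref{word:ACQ} must hold for each \ref{vanishing: NLP_beta}. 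This is exactly Lemma~\ref{vanishing: lem: BVC-ACQ implies BVC-GCQ}(b), but we argue it directly.

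Fix a partition. By Definition~\ref{vanishing: def: BVC-MFCQ}, the standard MFCQ holds at $x^\star$ for \ref{vanishing: NLP_beta}. Concretely, the equality gradients $\DECon_k$, $k\in\SetN_\ECon$, and $\DConCon i$, $i\in\II{0}{-}^\ConConUP(x^\star)\cup\ZeroIndices$, are linearly independent. There is also a direction $\hat d$ that is strictly positive on all active inequalities of this NLP. Note that the active inequality constraints of \ref{vanishing: NLP_beta} at $x^\star$ are exactly:
\begin{itemize}
\item $\ICon_l$ for $l\in\I\ICon(x^\star)$;
\item $\ConCon i$ for $i\in\II{0}{+}^\ConConUP(x^\star)\cup\PosIndices$;
\item $\VanCon ij$ for $(i,j)\in\II{+}{0}(x^\star)\cup\PosIndicesVC$.
\end{itemize}
The remaining constraints $\VanCon ij\ge0$ on $\II{+}{+}\cup\II{0}{+}$ and $\ConCon i\ge 0$ on $\II{+}{\cdot}^\ConConUP$ are inactive. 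Hence the classical result that MFCQ implies ACQ for a standard NLP applies and gives the desired inclusion.

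To keep the argument self-contained, we sketch the classical step using the machinery of Lemma~\ref{vanishing: lem: BVC-MFCQ}. Take $d\in\Lag^{\NLP(\PosIndices,\ZeroIndices,x^\star)}(x^\star)$ and set $d_\tau\define d+\tau\hat d$ for $\tau>0$. Then $d_\tau$ satisfies all linearized equalities and all active linearized inequalities strictly. We then repeat Step~1 of the proof of Lemma~\ref{vanishing: lem: BVC-MFCQ} with $d_\tau$ in place of the BVC-MFCQ vector. The implicit function theorem, applied with the map $z$ built from the equality constraints of the NLP, yields a $C^1$ curve $x_\tau(t)$ with $x_\tau(0)=x^\star$ and $x_\tau'(0)=d_\tau$, along which all these equalities hold identically. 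By the strict inequalities and continuity, $x_\tau(t)$ is feasible for \ref{vanishing: NLP_beta} for small $t\ge0$. Since this feasible set is contained in that of \eqref{eq:mpbvc}, $x_\tau(t)$ is also feasible for the original problem. Hence $d_\tau\in\mathcal{T}^{\NLP(\PosIndices,\ZeroIndices,x^\star)}(x^\star)$. Finally, the tangent cone is closed and $d_\tau\to d$ as $\tau\to 0$, so $d$ belongs to it as well.

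Taking the union over all partitions and invoking Lemma~\ref{vanishing: lem. T(x) and L(x) beta}\ref{vanishing: lem. T(x) and L(x) beta: caseA},\ref{vanishing: lem. T(x) and L(x) beta: caseB} gives $\Lag^{\mathrm{BVC}}(x^\star)\subseteq\mathcal{T}(X,x^\star)$, which completes the proof. The main delicate point is Step~1 for a general $d$ rather than the specific BVC-MFCQ vector. We must check that the feasibility argument of Lemma~\ref{vanishing: lem: BVC-MFCQ} used only $J_z(x^\star)d=0$ and strict positivity on the active inequalities, both of which $d_\tau$ satisfies. We must also verify closedness of the tangent cone, which is standard by a diagonal sequence argument.
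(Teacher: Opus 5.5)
Your proposal is correct and follows essentially the same route as the paper. You pick a partition with $d\in\Lag^{\NLP(\PosIndices,\ZeroIndices,x^\star)}(x^\star)$ via part (b) of Lemma~\ref{vanishing: lem. T(x) and L(x) beta}, and perturb to $d+\delta\hat d$ with the BVC-MFCQ vector $\hat d$ of that partition. You then rerun the implicit-function construction of Lemma~\ref{vanishing: lem: BVC-MFCQ} to get a feasible curve, pass to the limit by closedness of the tangent cone, and take the reverse inclusion from part (c). The only cosmetic difference is that you first land in $\mathcal{T}^{\NLP(\PosIndices,\ZeroIndices,x^\star)}(x^\star)$ and then use the easy inclusion of part (a), whereas the paper places $d_\delta$ directly in $\mathcal{T}(X,x^\star)$.
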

\begin{proof} We prove that $\Lag^\mathrm{BVC}(x^\star)\subseteq\mathcal{T}(X,x^\star)$. 
  Let $d\in\Lag^\mathrm{BVC}(x^\star)$. From Lemma~\ref{vanishing: lem. T(x) and L(x) beta}~\ref{vanishing: lem. T(x) and L(x) beta: caseB}
  we can find a partition $\PosIndices \cupdot \ZeroIndices = \II{0}{0}^\ConConUP(x^\star)$
  such that $d\in\Lag^{\NLP(\PosIndices,\ZeroIndices,x^\star)}$, \ie for the vanishing and controlling constraints it holds\begin{align*}
            \DConCon i(x^\star)^Td &= 0 &&\text{for all }i\, \in \II{0}{-}^\ConConUP(x^\star),\\
            \DConCon i(x^\star)^Td &\geq 0 &&\text{for all }i\, \in \II{0}{+}^\ConConUP(x^\star), \\
            \nabla \VanCon ij(x^\star)^Td &\geq 0 &&\text{for all }(i,j) \in \II{+}{0}(x^\star),\\
            \DConCon i(x^\star)^Td &\geq 0&&\text{for all }i\,\in\PosIndices,\\
            \nabla \VanCon ij(x^\star)^Td &\geq 0&&\text{for all }(i,j)\in\PosIndicesVC,\\
            \DConCon i(x^\star)^Td &= 0&&\text{for all }i\,\in\ZeroIndices.
        \end{align*} Let $\hat{d}\in\Real^n$ such that it satisfies~\ref{word:BVC_MFCQ} with the same partition
        and define $d_{\delta}\define d+\delta\hat{d}$. It follows that
        \begin{subequations}\label{vanishing: MFCQ->ACQ eq}
        \begin{align*}
        &\DConCon i(x^\star)^Td_{\delta}= \DConCon i(x^\star)^Td + \delta \DConCon i(x^\star)^T\hat{d} = 0\\
        &\hspace{2.5em}\text{for all } i\in \II{0}{-}^\ConConUP(x^\star)\cup\ZeroIndices, \\
        &\DConCon i(x^\star)^Td_{\delta} = \DConCon i(x^\star)^Td + \delta \DConCon i(x^\star)^T\hat{d} > 0\\
        &\hspace{2.5em} \text{for all } i\in \II{0}{+}^\ConConUP(x^\star)\cup\PosIndices. 
        \shortintertext{We move on to the active vanishing constraints}
        &\nabla \VanCon ij(x^\star)^Td_{\delta}
        = \nabla \VanCon ij(x^\star)^Td + \delta \nabla \VanCon ij(x^\star)^T\hat{d} > 0\\
        &\hspace{2.5em} \text{for all } (i,j)\in \II{+}{0}(x^\star)\cup\PosIndicesVC 
        \end{align*}
        \end{subequations}
        for $\delta>0$. 
        It follows that $d_{\delta}$ satisfies the~\ref{word:BVC_MFCQ}
        conditions for sufficiently small $\delta$. 
        Let $\delta$ be fixed. Since~\ref{word:BVC_MFCQ} holds for $x^\star$ and $d_{\delta}$ we obtain with Lemma~\ref{vanishing: lem: BVC-MFCQ}
        that there exists an $\epsilon^{(\delta)}>0$ and a smooth curve $x_{\delta}:(-\epsilon,\epsilon)\to\Real^n$
        such that $x_{\delta}(0)=x^\star$, $\frac{d}{dt} x_{\delta}(0)=d_{\delta}$
        and $x_{\delta}(t)$ is feasible for all $t\geq0$ sufficiently small. We take a sequence $\{t^{(k)}\}$ with $t^{(k)}$ sufficiently small,
        $t^{(k)}\to0$ for $k\to\infty$ and define $x^{(k)}\define x_{\delta}(t^{(k)})$.
        Then $x^{(k)}$ is feasible and $x^{(k)}\to x^\star$ for $k\to\infty$. Hence, \begin{align*}
            d_{\delta} = \frac{d}{dt} x_{\delta}(0) = \lim_{k\to\infty}\frac{x_{\delta}(t^{(k)})-x_{\delta}(0)}{t^{(k)}} =\lim_{k\to\infty}\frac{x^{(k)}-x^\star}{t^{(k)}},
        \end{align*} \ie $d_{\delta}\in\mathcal{T}(X,x^\star)$ for every sufficiently small $\delta>0$.
        Since the cone $\mathcal{T}(X,x^\star)$ is closed we can let $\delta \to 0$ and get $ d_{\delta}\to d\in\mathcal{T}(X,x^\star)$.
\end{proof}
We now state the adapted linear independence constraint qualification, introduced in \cite{mpbvc_pal,diss_pal}. 
As in classic nonlinear programming, this constraint qualification is a strong regularity condition and implies weaker
constraint qualifications.
\begin{definition}[BVC-LICQ]\label{vanishing: def: BVC-LICQ}
Let $x^\star$ be a feasible point of~\eqref{eq:mpbvc}. We say $x^\star$ satisfies the \emph{linear independence constraint qualification} (\setword{BVC-LICQ}{word:BVC_LICQ}) if the gradients \begin{align*}
	\DECon_k(x^\star)& \text{ for all }k \in \SetN_\ECon,\\
    \DICon_l(x^\star)& \text{ for all }l \in \I\ICon(x^\star),\\
    \DConCon i(x^\star)& \text{ for all }i \in \I0^\ConConUP(x^\star), \\
    \DVanCon ij(x^\star)& \text{ for all }(i,j) \in \II{0}{0}(x^\star)\cup\II{+}{0}(x^\star)
\end{align*} are linearly independent.
\end{definition}
The following lemma gives the relation between \ref{word:BVC_LICQ} and \ref{word:BVC_MFCQ}.
\begin{lemma}\label{vanishing: lem: BVC-LICQ implies BVC-MFCQ}
    Let $x^\star$ a feasible point of~\eqref{eq:mpbvc}. If~\ref{word:BVC_LICQ} holds at $x^\star$, then so does~\ref{word:BVC_MFCQ}.
\end{lemma}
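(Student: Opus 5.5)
Fix an arbitrary partition $\PosIndices\cupdot\ZeroIndices=\II{0}{0}^\ConConUP(x^\star)$. We then check the two requirements of Definition~\ref{vanishing: def: BVC-MFCQ} for \ref{vanishing: NLP_beta} separately.

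The first requirement is linear independence. The gradients $\DECon_k(x^\star)$ for $k\in\SetN_\ECon$ and $\DConCon i(x^\star)$ for $i\in\II{0}{-}^\ConConUP(x^\star)\cup\ZeroIndices$ form a subfamily of the family in Definition~\ref{vanishing: def: BVC-LICQ}. This holds because $\II{0}{-}^\ConConUP(x^\star)\cup\ZeroIndices\subseteq\I0^\ConConUP(x^\star)$. A subfamily of a linearly independent family is linearly independent, so this part follows at once.

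The second requirement is the existence of the direction $d$. Collect the gradients from Definition~\ref{vanishing: def: BVC-LICQ} as the rows of a matrix $A$ with full row rank. The relevant rows are
\begin{itemize}
\item $\DECon_k(x^\star)^T$ for $k\in\SetN_\ECon$,
\item $\DICon_l(x^\star)^T$ for $l\in\I\ICon(x^\star)$,
\item $\DConCon i(x^\star)^T$ for $i\in\I0^\ConConUP(x^\star)$,
\item $\DVanCon ij(x^\star)^T$ for $(i,j)\in\II{0}{0}(x^\star)\cup\II{+}{0}(x^\star)$.
\end{itemize}
Prescribe a right-hand side $b$ componentwise: $b=0$ on the rows that must vanish, namely the $\ECon$-rows and the $\ConConUP_i$-rows with $i\in\II{0}{-}^\ConConUP(x^\star)\cup\ZeroIndices$, and $b=1$ on all other rows. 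Because $A$ has full row rank, $AA^T$ is invertible, and $d\define A^T(AA^T)^{-1}b$ solves $Ad=b$.

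It remains to confirm that this $d$ satisfies every required condition, which comes down to checking that each row appears in $A$.
\begin{itemize}
\item The strict inequalities for $\DICon_l$ with $l\in\I\ICon(x^\star)$ hold because these rows receive the value $1$.
\item The strict inequalities for $\DConCon i$ with $i\in\I0^\ConConUP(x^\star)\setminus(\II{0}{-}^\ConConUP(x^\star)\cup\ZeroIndices)$ hold for the same reason.
\item The strict inequalities for $\DVanCon ij$ with $(i,j)\in\II{+}{0}(x^\star)\cup\PosIndicesVC$ also hold, since $\PosIndicesVC\subseteq\II{0}{0}(x^\star)$ and so these rows are present in $A$ with value $1$.
\item The rows $\DVanCon ij$ with $(i,j)\in\II{0}{0}(x^\star)$ and $i\in\ZeroIndices$ are unconstrained by BVC-MFCQ, so assigning them the value $1$ is harmless.
\end{itemize}
Thus $d$ satisfies all the equalities and strict inequalities of Definition~\ref{vanishing: def: BVC-MFCQ}. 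Since the partition was arbitrary, \ref{word:BVC_MFCQ} holds.

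The only step needing care is this bookkeeping. One must make sure that every gradient appearing in the BVC-MFCQ system occurs among the BVC-LICQ gradients and that no row is assigned two conflicting prescribed values. Both points hold because the index sets involved are disjoint subsets of those listed in Definition~\ref{vanishing: def: BVC-LICQ}, so no genuine obstacle is expected.
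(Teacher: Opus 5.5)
Your proof is correct and is essentially the paper's argument. The paper just observes that \ref{word:BVC_LICQ} gives LICQ, hence MFCQ, for every $\NLP(\PosIndices,\ZeroIndices,x^\star)$, and your construction $d = A^T(AA^T)^{-1}b$ is exactly the standard LICQ$\Rightarrow$MFCQ argument written out against the explicit conditions of Definition~\ref{vanishing: def: BVC-MFCQ}. That explicitness even sidesteps a literal reading of the NLP, where an index $i\in\PosIndices\cap\II{0}{-}^\ConConUP(x^\star)$ would appear both as an inequality and as an equality.

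One small inaccuracy in your closing remark: the index sets are not all disjoint. $\II{0}{-}^\ConConUP(x^\star)$ can intersect $\II{0}{0}^\ConConUP(x^\star)$, since one controlling constraint may govern both a bi-active and a strictly negative vanishing constraint. This causes no conflict, because you prescribe $b$ row by row, with value $0$ on the whole union $\II{0}{-}^\ConConUP(x^\star)\cup\ZeroIndices$.
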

\begin{proof}
	Since~\ref{word:BVC_LICQ} holds, it follows that LICQ holds
  for~\ref{vanishing: NLP_beta} for every partition ${\PosIndices \cupdot \ZeroIndices = \II{0}{0}^\ConConUP(x^\star)}$.
  Consequently, MFCQ holds for all of these NLPs as well, which
  by its definition implies~\ref{word:BVC_MFCQ}.
\end{proof}
As a consequence, it follows that \ref{word:BVC_LICQ} implies \ref{word:BVC_GCQ}.
\begin{theorem}\label{vanishing: thm: BVC-LICQ implies ACQ}
 Let $x^\star$ be a local minimum of~\eqref{eq:mpbvc} such that~\ref{word:BVC_LICQ} holds at $x^\star$. Then~\ref{word:BVC_ACQ} and~\ref{word:BVC_GCQ} hold at $x^\star$ and the Lagrange multipliers for~\eqref{eq:mpbvc-Lagrangian} are unique.
\end{theorem}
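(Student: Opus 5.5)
The argument is a chain of implications already available in the paper, plus a separate uniqueness argument for the multipliers.

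\textbf{Step 1: BVC-ACQ and BVC-GCQ.} By Lemma~\ref{vanishing: lem: BVC-LICQ implies BVC-MFCQ}, \ref{word:BVC_LICQ} at $x^\star$ implies \ref{word:BVC_MFCQ} at $x^\star$. Theorem~\ref{vanishing: thm: BVC-MFCQ implies BVC-ACQ} then gives \ref{word:BVC_ACQ}, that is, $\Lag^\mathrm{BVC}(x^\star)=\mathcal{T}(X,x^\star)$. Taking polar cones of both sides yields \ref{word:BVC_GCQ}; this is Lemma~\ref{vanishing: lem: BVC-ACQ implies BVC-GCQ}(a). Local optimality of $x^\star$ is not needed for this part.

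\textbf{Step 2: existence of multipliers.} Local optimality is used here, so that the multipliers in the statement make sense. Since $x^\star$ is a local minimum, $-\nabla f(x^\star)\in\mathcal{T}(X,x^\star)^{-}$.

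Under \ref{word:BVC_LICQ}, LICQ holds for every $\NLP(\PosIndices,\ZeroIndices,x^\star)$. So for each partition the KKT conditions of that NLP hold at $x^\star$, which is also a local minimum of each restricted NLP. Choosing the partition $\PosIndices=\II{0}{0}^\ConConUP(x^\star)$, $\ZeroIndices=\emptyset$ gives multipliers with $\mu^\VanConUP_{ij}\ge 0$ on $\II{0}{0}(x^\star)$ and $\mu^\ConConUP_i\ge0$ on $\II{0}{0}^\ConConUP(x^\star)$.

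Next consider, for each $(i,j)\in\II{0}{0}(x^\star)$, a partition with $i\in\ZeroIndices$. Its KKT conditions give a representation of $\nabla f(x^\star)$ with no $\VanConUP_{ij}$-term for that $i$. By linear independence of all the gradients listed in \ref{word:BVC_LICQ}, this representation must coincide with the first one. Hence $\mu^\VanConUP_{ij}=0$ on the bi-active set. Comparing the partition $i\in\PosIndices$ with the partition $i\in\ZeroIndices$ in the same way yields the sign conditions required in Theorem~\ref{vanishing: thm: kkt}. So strongly stationary multipliers exist, independently of whether Theorem~\ref{vanishing: thm: kkt} is invoked via GCQ.

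\textbf{Step 3: uniqueness.} Suppose two multiplier vectors satisfy \eqref{vanishing: kkt-lagrange-equation} and \eqref{vanishing: kkt-lagrange-multiplier}. Subtract the two gradient equations. The only gradients that can carry nonzero multipliers are:
\begin{itemize}
\item $\DECon_k$ for $k\in\SetN_\ECon$,
\item $\DICon_l$ for $l\in\I\ICon(x^\star)$ (complementarity kills the rest),
\item $\DConCon i$ for $i\in\I0^\ConConUP(x^\star)$ (since $\mu^\ConConUP_i=0$ on $\II{+}{+}^\ConConUP\cup\II{+}{0}^\ConConUP$),
\item $\DVanCon ij$ for $(i,j)\in\II{+}{0}(x^\star)$ (all other $\mu^\VanConUP_{ij}$ vanish).
\end{itemize}
This is a subfamily of the gradients in \ref{word:BVC_LICQ}. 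Linear independence therefore forces all differences to vanish, so the multipliers are unique.

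\textbf{Main obstacle.} The delicate point is the bookkeeping of the index sets, in particular making sure that $\II{+}{0}^\ConConUP$ and $\I{0}^\ConConUP$ are handled so that no dependent gradient, such as $\DConCon i$ with $\ConCon i(x^\star)>0$, enters with a nonzero multiplier. Once that is checked, everything else follows directly from the cited results.
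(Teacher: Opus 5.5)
Your Steps~1 and~3 are exactly the paper's proof. Step~1 is the chain \ref{word:BVC_LICQ} $\Rightarrow$ \ref{word:BVC_MFCQ} $\Rightarrow$ \ref{word:BVC_ACQ} $\Rightarrow$ \ref{word:BVC_GCQ} via Lemma~\ref{vanishing: lem: BVC-LICQ implies BVC-MFCQ}, Theorem~\ref{vanishing: thm: BVC-MFCQ implies BVC-ACQ} and polarity. Step~3 gives uniqueness because any multiplier vector satisfying \eqref{vanishing: kkt-lagrange-multiplier} is supported on a subfamily of the \ref{word:BVC_LICQ} gradients. You also rightly make explicit that the zero conditions keep out gradients such as $\DConCon i(x^\star)$ with $\ConCon i(x^\star)>0$.

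Step~2 goes beyond what the paper does here. The paper only asserts uniqueness and gets existence later, in Corollary~\ref{vanishing: cor: BVC-LICQ implies strong stat}: it passes through Lemma~\ref{vanishing: lem: BVC-LICQ implies P-LICQ} and Theorem~\ref{vanishing: thm: BVC-GCQ+partialLICQ} to \ref{word:GCQ}, then applies Farkas in Theorem~\ref{vanishing: thm: kkt}. You instead compare the KKT multipliers of the tightened programs directly, e.g.\ $\NLP(\II{0}{0}^\ConConUP(x^\star),\emptyset,x^\star)$ against $\NLP(\emptyset,\II{0}{0}^\ConConUP(x^\star),x^\star)$. This is the same subtract-two-representations trick the paper uses inside Theorem~\ref{vanishing: thm: BVC-GCQ+partialLICQ}, but applied only to $\nabla f(x^\star)$ rather than to every element of $\mathcal{T}(X,x^\star)^{-}$. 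Your version is shorter and needs no polar-cone calculus, but it relies on full \ref{word:BVC_LICQ} to identify the multipliers across partitions. The paper's route yields the stronger fact that \ref{word:BVC_GCQ} together with the weaker \ref{word:partial_BVC_LICQ} already gives \ref{word:GCQ}.

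One small bookkeeping point concerns $i\in\II{0}{0}^\ConConUP(x^\star)\cap\II{0}{-}^\ConConUP(x^\star)$. There every tightened program contains $\ConCon i=0$, so the duplicated constraints $\ConCon i\geq 0$ and $\ConCon i=0$ must be merged before LICQ is invoked. Also, the partition with $\ZeroIndices=\emptyset$ then does not give $\mu^\ConConUP_i\geq 0$ for such $i$. This is harmless, since Theorem~\ref{vanishing: thm: kkt} only requires $\mu^\ConConUP_i$ to be free there.
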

\begin{proof}
    Clear by definitions of~\ref{word:BVC_ACQ} and~\ref{word:BVC_GCQ}, Lemma~\ref{vanishing: lem: BVC-LICQ implies BVC-MFCQ}, and Theorem~\ref{vanishing: thm: BVC-MFCQ implies BVC-ACQ}.
    The linear independence of the gradients results in unique Lagrange multipliers.
\end{proof}
\subsection{Deriving Guignard CQ from BVC Constraint Qualifications}
We stated constraint qualifications implying~\ref{word:BVC_GCQ} so far (see Figure~\ref{vanishing: fig: CQ implikations}). 
What we actually need is \ref{word:GCQ} to get the strong stationarity conditions representing first order optimality conditions.  
The next definitions and results are motivated by~\cite{on_the_abadie} to have an implication from~\ref{word:BVC_LICQ} together with~\ref{word:BVC_GCQ} to~\ref{word:GCQ}.
\begin{definition}\label{vanishing: def: nonsingular ineq}
	Consider the linear system
  \begin{align}
    \label{vanishing: nonsingular system}
		Ad \geq r \text{ and } B d = q, 
	\end{align}
  where $A,B$ are matrices, and $d,r,q$ are vectors of appropriate dimensions.
  A single inequality $a_id \geq r_i$  with a row $a_i$ of the matrix $A$ is called \emph{nonsingular}
  if there exists a solution of system~\eqref{vanishing: nonsingular system} such that $a_id>r$ holds.
  Otherwise we call this inequality \emph{singular}, \ie $a_id=r_i$ is the only occurring solution.
\end{definition}
If we examine the system defining $\Lag(x^\star)$ in~\eqref{eq:mpbvc-L(x)}, which we can rewrite as
\begin{align*}
\begin{pmatrix}
	\DICon_{\I\ICon(x^\star)}(x^\star)^T \\
	\DConCon {\II{0}{+}^\ConConUP(x^\star)\cup\II{0}{0}^\ConConUP(x^\star)}(x^\star)^T \\
	\nabla \VanConUP_{\II{+}{0}(x^\star)} (x^\star)^T \\
\end{pmatrix}\cdot d &\geq 0 \text{, and}\\ \begin{pmatrix}
\DECon_{\SetN_\ECon}(x^\star)^T \\
\DConCon {\II{0}{-}^\ConConUP(x^\star)}(x^\star)^T \\
\end{pmatrix}\cdot d &= 0,
\end{align*} then we can define a partition of $\II{0}{0}^\ConConUP(x^\star)$ by setting
\begin{equation}\label{vanishing: def: betaH}
  \begin{aligned}
    \BetaIndices &\define\{i\,\mid\, \DConCon{i}(x^\star)^Td\geq0 \text{ is nonsingular for } \II{0}{0}^\ConConUP(x^\star) \},\\
    \ZeroBetaIndices &\define\{i\,\mid\, \DConCon{i}(x^\star)^Td\geq0 \text{ is singular for } i \in\II{0}{0}(x^\star)\}
  \end{aligned}
\end{equation} with respective $\BetaIndicesVC$ and $\ZeroBetaIndicesVC$. 
We can set $\PosIndices \define \BetaIndices$ and $\ZeroIndices \define \ZeroBetaIndices$.
Furthermore we define the following cone 
\begin{equation}\label{vanishing: LLxstar with betaH}
    \begin{aligned}
\Lag^{\mathrm{sg}}(x^\star)& \define \Lag^{\super}(x^\star)\\
&\quad\cap\left\{
d\in\Real^n \biggm| \begin{array}{rl}
	\DConCon i(x^\star)^Td= 0&\text{for}~\;i\in\ZeroBetaIndices,\\
  \DConCon i(x^\star)^Td\geq 0&\text{for}~\;i\in\BetaIndices\\
\end{array}\right\},
  \end{aligned}
\end{equation} where $\Lag^{\super}(x^\star)$ was defined in \eqref{vanishing: L(x)_super}. This cone $\Lag^{\mathrm{sg}}(x^\star)$ 
is used in the proof of Theorem~\ref{vanishing: thm: BVC-GCQ+partialLICQ} and is a 
different representation of the linearized cone $\Lag(x^\star)$ given in \eqref{eq:mpbvc-L(x)}, since on the one hand $\Lag^{\mathrm{sg}}(x^\star)\subset\Lag(x^\star)$ by definition.
On the other hand the condition $\DConCon{i}(x^\star)^Td\geq0$ for $i\in\II{0}{0}^\ConConUP(x^\star)$ in $\Lag(x^\star)$ can be equivalently written as
\begin{equation*}
	\DConCon i(x^\star)^Td= 0 \text{ for } i\in\ZeroBetaIndices \quad \text{ and } \quad
  \DConCon i(x^\star)^Td\geq 0\text{ for } i\in\BetaIndices,\\
\end{equation*}
since $\ZeroBetaIndices$ contains precisely those indices for which the linearized constraints of the index set $\II{0}{0}^\ConConUP(x^\star)$ are singular and
$\BetaIndices$ are nonsingular.
\begin{definition}[Partial BVC-LICQ with $\BetaIndices$]\label{vanishing: partial LICQ}
	Let $x^\star$ be a feasible point \newline of~\eqref{eq:mpbvc} and 
  consider $\BetaIndices\subset\II{0}{0}^\ConConUP(x^\star)$ from~\eqref{vanishing: def: betaH}.
  We say the \emph{\setword{partial BVC-LICQ}{word:partial_BVC_LICQ} with $\BetaIndices$}
  holds if for all multipliers $\mu^\ECon\in\Real^{n_\ECon}$, $\mu^\ICon\in\Real^{n_\ICon}$, $\mu^\ConConUP\in\Real^{n_\ConConUP}$,
  and $\mu^\VanConUP\in\Real^{n_\VanConUP\cdot n_\ConConUP}$ satisfying the linear combination 
  \begin{align*}
		0 &= \sum_{k\in\SetN_\ECon}\mu^\ECon_k\DECon_k(x^\star) 
    +\sum_{l\in\I\ICon(x^\star)}\mu^\ICon_l\DICon_l(x^\star)\\
    &\quad+\sum_{(i,j)\in\II+0(x^\star)} \mu^\VanConUP_{ij}\DVanCon ij(x^\star)
    +\sum_{(i,j)\in \II00(x^\star)} \mu^\VanConUP_{ij}\DVanCon ij(x^\star)\\
    &\quad+\sum_{\mathclap{i\in\II0-^\ConConUP(x^\star)}}\mu^\ConConUP_i\DConCon i(x^\star)
    +\sum_{\mathclap{i\in\II00^\ConConUP(x^\star)}}\mu^\ConConUP_i\DConCon i(x^\star)
    +\sum_{\mathclap{i\in\II0+^\ConConUP(x^\star)}}\mu^\ConConUP_i\DConCon i(x^\star)
	\end{align*}
  it follows that $\mu^\VanConUP_{ij}=0$ for all $(i,j)\in\BetaIndicesVC$.
\end{definition}
\begin{lemma}\label{vanishing: lem: BVC-LICQ implies P-LICQ}
  Let $x^\star$ be feasible point of~\eqref{eq:mpbvc} where~\ref{word:BVC_LICQ} holds. Then~\ref{word:partial_BVC_LICQ} with $\BetaIndices$ holds at $x^\star$.
\end{lemma}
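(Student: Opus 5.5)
The argument is a direct linear-independence argument. The gradient family appearing in the linear combination of the partial BVC-LICQ definition consists of
\[
\DECon_k(x^\star),\ k\in\SetN_\ECon;\qquad \DICon_l(x^\star),\ l\in\I\ICon(x^\star);\qquad \DVanCon ij(x^\star),\ (i,j)\in\II{+}{0}(x^\star)\cup\II{0}{0}(x^\star);
\]
\[
\DConCon i(x^\star),\ i\in\II{0}{-}^\ConConUP(x^\star)\cup\II{0}{0}^\ConConUP(x^\star)\cup\II{0}{+}^\ConConUP(x^\star).
\]
I would first check that every vector in this family belongs to the family assumed linearly independent in the BVC-LICQ definition. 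This is clear, because
\[
\II{0}{-}^\ConConUP(x^\star)\cup\II{0}{0}^\ConConUP(x^\star)\cup\II{0}{+}^\ConConUP(x^\star)=\I{0}^\ConConUP(x^\star),
\]
and the vanishing-constraint index set is exactly $\II{0}{0}(x^\star)\cup\II{+}{0}(x^\star)$. The two families therefore coincide.

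One bookkeeping point needs care. A controlling index $i$ may occur in several of the sets $\II{0}{\cdot}^\ConConUP(x^\star)$ simultaneously, because different $j$ in the same block can have different signs of $\VanCon ij(x^\star)$. In that case the same gradient $\DConCon i(x^\star)$ appears in more than one sum of the linear combination. I would group these terms, so that each $\DConCon i(x^\star)$ with $i\in\I{0}^\ConConUP(x^\star)$ occurs exactly once, with an aggregated coefficient. For the vanishing multipliers this issue does not arise: the sets $\II{+}{0}(x^\star)$ and $\II{0}{0}(x^\star)$ are disjoint, since $\ConCon i(x^\star)>0$ on the first and $\ConCon i(x^\star)=0$ on the second. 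Each $\mu^\VanConUP_{ij}$ therefore multiplies a single, distinct gradient.

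Now take any multipliers for which the linear combination vanishes. By BVC-LICQ, every coefficient of the independent family must be zero. In particular $\mu^\VanConUP_{ij}=0$ for all $(i,j)\in\II{0}{0}(x^\star)$. Since $\BetaIndicesVC\subseteq\II{0}{0}(x^\star)$ by construction from $\BetaIndices\subseteq\II{0}{0}^\ConConUP(x^\star)$, this gives $\mu^\VanConUP_{ij}=0$ on $\BetaIndicesVC$, which is exactly the partial BVC-LICQ with $\BetaIndices$.

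The grouping of duplicated $\DConCon i$ terms is the only step that needs attention. Its possible failure affects only the $\mu^\ConConUP$ coefficients, which partial BVC-LICQ does not constrain, so it cannot break the argument. The precise form of the singular/nonsingular partition plays no role either, since the conclusion holds for all of $\II{0}{0}(x^\star)$ at once.
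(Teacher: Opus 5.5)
Your proposal is correct and matches the paper's argument. The gradients in the partial BVC-LICQ combination are exactly the BVC-LICQ family, so linear independence forces every coefficient to vanish, in particular $\mu^\VanConUP_{ij}=0$ on $\BetaIndicesVC\subseteq\II{0}{0}(x^\star)$; your grouping of the repeated $\DConCon i(x^\star)$ terms is a harmless refinement that the paper leaves implicit.
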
 \begin{proof}
Since all gradients are linearly independent, all multipliers needs to be zero, especially $\mu^\VanConUP_{ij}=0$ for all $(i,j)\in\BetaIndicesVC$.
\end{proof}Note that \cite[Definition~5.4]{on_the_abadie} the whole index set of the bi-active points was used to define \emph{partial MPVC-LICQ}.
We stick to the weaker definition presented here with $\BetaIndices$ instead of $\II{0}{0}(x^\star)$,
since it still allows us to get~\ref{word:GCQ} with~\ref{word:BVC_GCQ} and partial BVC-LICQ with $\BetaIndices$.
\begin{theorem}\label{vanishing: thm: BVC-GCQ+partialLICQ}
	If $x^\star$ is a feasible point, where~\ref{word:BVC_GCQ} and~\ref{word:partial_BVC_LICQ} with $\BetaIndices$ holds, then~\ref{word:GCQ} holds at $x^\star$.
\end{theorem}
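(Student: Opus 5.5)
Since $\mathcal{T}(X,x^\star)\subseteq\Lag(x^\star)$ always holds, we get $\Lag(x^\star)^{-}\subseteq\mathcal{T}(X,x^\star)^{-}$ for free. The whole task is the reverse inclusion $\mathcal{T}(X,x^\star)^{-}\subseteq\Lag(x^\star)^{-}$. By \ref{word:BVC_GCQ} we have $\mathcal{T}(X,x^\star)^{-}=\Lag^{\mathrm{BVC}}(x^\star)^{-}$. It therefore suffices to show $\Lag^{\mathrm{BVC}}(x^\star)^{-}\subseteq\Lag(x^\star)^{-}$, and since $\Lag(x^\star)=\Lag^{\mathrm{sg}}(x^\star)$, equivalently $\Lag^{\mathrm{BVC}}(x^\star)^{-}\subseteq\Lag^{\mathrm{sg}}(x^\star)^{-}$.

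The plan is to work with polars of polyhedral cones via Farkas' lemma. Take the partition $\PosIndices\define\BetaIndices$, $\ZeroIndices\define\ZeroBetaIndices$. By Lemma~\ref{vanishing: lem. T(x) and L(x) beta}(b), $\Lag^{\NLP(\BetaIndices,\ZeroBetaIndices,x^\star)}(x^\star)\subseteq\Lag^{\mathrm{BVC}}(x^\star)$, so polarity reverses the inclusion. Hence any $w\in\Lag^{\mathrm{BVC}}(x^\star)^{-}$ lies in the polar of the polyhedral cone $\Lag^{\NLP(\BetaIndices,\ZeroBetaIndices,x^\star)}(x^\star)$. Farkas' lemma then gives the representation
\[
-w=\sum_{k}\mu^\ECon_k\DECon_k+\sum_{l\in\I\ICon}\mu^\ICon_l\DICon_l+\sum_{i}\mu^\ConConUP_i\DConCon i+\sum_{(i,j)\in\II{+}{0}\cup\BetaIndicesVC}\mu^\VanConUP_{ij}\DVanCon ij ,
\]
all evaluated at $x^\star$. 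The signs are as follows: $\mu^\ICon\ge0$; $\mu^\ConConUP_i\ge0$ for $i\in\II{0}{+}^\ConConUP\cup\BetaIndices$; $\mu^\ConConUP_i$ is free for $i\in\II{0}{-}^\ConConUP\cup\ZeroBetaIndices$; and $\mu^\VanConUP_{ij}\ge0$. Doing the same for $\Lag^{\mathrm{sg}}(x^\star)$ gives the same representation but without the $\BetaIndicesVC$ terms. So the goal is to remove these terms, i.e.\ to find a representation of $-w$ with $\mu^\VanConUP_{ij}=0$ for $(i,j)\in\BetaIndicesVC$.

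This removal is the main obstacle. The partition alone is not enough: $w$ must lie in the polar of every $\Lag^{\NLP(\PosIndices,\ZeroIndices,x^\star)}(x^\star)$, since $\Lag^{\mathrm{BVC}}(x^\star)$ is their union. For $(i,j)\in\BetaIndicesVC$, I would use a second partition that moves $i$ into $\ZeroIndices$ (or all of $\BetaIndices$ into it, i.e.\ $\PosIndices=\emptyset$, $\ZeroIndices=\II{0}{0}^\ConConUP$). This yields a second representation of $-w$ with no $\BetaIndicesVC$ terms at all, but with $\mu^\ConConUP_i$ free for $i\in\BetaIndices$. Subtracting the two representations gives a vanishing linear combination of exactly the gradients in Definition~\ref{vanishing: partial LICQ}. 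Partial BVC-LICQ with $\BetaIndices$ then forces the $\BetaIndicesVC$ coefficients of the difference to vanish, and those are exactly the first representation's $\mu^\VanConUP_{ij}$. So the first representation already has $\mu^\VanConUP_{ij}=0$ on $\BetaIndicesVC$. It is a representation with sign-constrained $\mu^\ConConUP_i\ge0$ on $\BetaIndices$ and free multipliers on $\ZeroBetaIndices$, which is precisely membership in $\Lag^{\mathrm{sg}}(x^\star)^{-}$.

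One point needs checking: the free multipliers for $\ZeroBetaIndices$ must be compatible with $\Lag^{\mathrm{sg}}$. They are, because singularity of those inequalities means $\DConCon i(x^\star)^Td=0$ on all of $\Lag(x^\star)$. Concluding, $w\in\Lag(x^\star)^{-}$, which gives $\mathcal{T}(X,x^\star)^{-}=\Lag(x^\star)^{-}$, i.e.\ \ref{word:GCQ}.
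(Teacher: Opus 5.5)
Your proof is correct and follows the paper's argument. Both reduce via BVC-GCQ to the intersection of the polars of the cones $\Lag^{\NLP(\PosIndices,\ZeroIndices,x^\star)}(x^\star)$, take Farkas representations of the same vector for two partitions, subtract, and use partial BVC-LICQ to kill the multipliers on $\BetaIndicesVC$, which gives membership in $\Lag^{\mathrm{sg}}(x^\star)^{-}=\Lag(x^\star)^{-}$. The only difference is cosmetic: the paper pairs $(\BetaIndices,\ZeroBetaIndices)$ with the swapped partition $(\ZeroBetaIndices,\BetaIndices)$ rather than your $(\emptyset,\II{0}{0}^\ConConUP(x^\star))$, and both choices work because partial BVC-LICQ only constrains the $\BetaIndicesVC$ coefficients (your $-w$ sign convention for the polar is also the more careful one).
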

\begin{proof}
	Since $\mathcal{T}(X,x^\star)\subseteq\Lag(x^\star)$ we need to show that $\mathcal{T}(X,x^\star)^{-}\subseteq\Lag(x^\star)^{-}$ for equality
  of the polar cones. Lemma~\ref{vanishing: lem. T(x) and L(x) beta} and~\ref{word:BVC_GCQ} gives us the representation of the dual tangent cone
  \begin{align}\label{vanishing: Tx*=Schnitt}
		\mathcal{T}(X,x^\star)^{-}=\bigcap_{\PosIndices \cupdot \ZeroIndices = \II{0}{0}^\ConConUP}\Lag^{\NLP(\PosIndices,\ZeroIndices,x^\star)}(x^\star)^{-}
	\end{align} From~\cite[Theorem~2]{giorgi} we obtain the representation for dual linear cones \begin{align}\label{vanishing: LLNLPxstar*}
		&\left( \Lag^{\NLP(\PosIndices,\ZeroIndices,x^\star)}(x^\star) \right)^{-}=\bigg\{ v\in\Real^n\,\bigg|\,
		v = \sum_{k\in\SetN_\ECon}\mu^\ECon_k\DECon_k(x^\star) 
    +\sum_{l\in\I\ICon(x^\star)}\mu^\ICon_l\DICon_l(x)\notag\\
    &\hspace{14em}+\enspace\sum_{\mathclap{(i,j)\in\II{+}{0}(x^\star)\cup\PosIndicesVC}}\enspace \mu^\VanConUP_{ij}\DVanCon ij(x) 
    +\sum_{i\in\I0^\ConConUP(x^\star)}\mu^\ConConUP_i\DConCon i(x)\notag\\
  &\hspace{14em}\text{with }\mu^\ICon_l\geq0\text{ for }l\in\I\ICon(x^\star),\notag\\
  &\hspace{14em}\mu^\VanConUP_{ij}\geq0\text{ for }(i,j)\in\II{+}{0}(x^\star)\cup\PosIndicesVC,\notag\\
  &\hspace{14em}\mu^\ConConUP_i\geq0\text{ for }i\in\II{0}{+}^\ConConUP(x^\star) \cup \PosIndices~\bigg\}
	\end{align}and taking $\BetaIndices$ from~\eqref{vanishing: def: betaH} and the respective representation for $\Lag(x^\star)=\Lag^{\mathrm{sg}}(x^\star)$ from~\eqref{vanishing: LLxstar with betaH}
  it follows that \begin{align}\label{vanishing LLxstar*}
		&\Lag^{\mathrm{sg}}(x^\star)^{-} = \bigg\{ v\in\Real^n\,\bigg|\,
			v = \sum_{k\in\SetN_\ECon}\mu^\ECon_k\DECon_k(x^\star) +\sum_{l\in\I\ICon(x^\star)}\mu^\ICon_l\DICon_l(x)\notag\\
      &\hspace{12em}+\enspace\sum_{\mathclap{(i,j)\in\II{+}{0}(x^\star)}}\enspace \mu^\VanConUP_{ij}\DVanCon ij(x) +\sum_{i\in\I0^\ConConUP(x^\star)}\mu^\ConConUP_i\DConCon i(x)\notag\\ 
      &\hspace{12em}\text{with }\mu^\ICon_l\geq0\text{ for }l\in\I\ICon(x^\star),\notag\\
      &\hspace{12em}\mu^\VanConUP_{ij}\geq0\text{ for }(i,j)\in\II{+}{0}(x^\star),\notag\\
      &\hspace{12em}\mu^\ConConUP_i\geq0 \text{ for }i\in\II{0}{+}^\ConConUP(x^\star)\cup\BetaIndices\bigg\}.
	\end{align} 
  Let $v\in\mathcal{T}(X,x^\star)^{-}$ and it follows from~\eqref{vanishing: Tx*=Schnitt}
  that $v\in \left( \Lag^{\NLP(\PosIndices,\ZeroIndices,x^\star)}(x^\star) \right)^{-}$ for all
  $\PosIndices \cupdot \ZeroIndices = \II{0}{0}^\ConConUP(x^\star)$. Taking again $\BetaIndices$ we set 
  \begin{align*}
		(\hat{\mathcal{B}}^\ConConUP_1,\hat{\mathcal{B}}^\ConConUP_2)\define(\BetaIndices,\ZeroBetaIndices)
    \text{ and }(\overline{\mathcal{B}}^\ConConUP_1,\overline{\mathcal{B}}^\ConConUP_2)
    \define(\hat{\mathcal{B}}^\ConConUP_2,\hat{\mathcal{B}}^\ConConUP_1)=(\ZeroBetaIndices,\BetaIndices) 
	\end{align*} with respective $\hat{\mathcal{B}}_1,\hat{\mathcal{B}}_2$ and $\overline{\mathcal{B}}_1,\overline{\mathcal{B}}_2$. 
  Together with~\eqref{vanishing: LLNLPxstar*} it follows that there exists two vectors
	$\hat{\mu}=(\hat{\mu}^\ECon,\hat{\mu}^\ICon,\hat{\mu}^\VanConUP,\hat{\mu}^\ConConUP)$ 
  with $\hat\mu^\ICon_l\geq0\text{ for }l\in\I\ICon(x^\star),~\hat\mu^\VanConUP_{ij}\geq0$ 
  for $(i,j)\in\II{+}{0}(x^\star)\cup\hat{\mathcal{B}}_1,~\hat\mu^\ConConUP_i\geq0$ 
  for $i\in\II{0}{+}^\ConConUP(x^\star)\cup\hat{\mathcal{B}}^\ConConUP_1$, 
  and $\overline{\mu}=(\overline{\mu}^\ECon,\overline{\mu}^\ICon,\overline{\mu}^\VanConUP,\overline{\mu}^\ConConUP)$ 
  with $\overline\mu^\ICon_l\geq0\text{ for }l\in\I\ICon(x^\star),~\overline\mu^\VanConUP_{ij}\geq0$ 
  for $(i,j)\in\II{+}{0}(x^\star)\cup\overline{\mathcal{B}}_1,~\overline\mu^\ConConUP_i\geq0$ 
  for $i\in\II{0}{+}^\ConConUP(x^\star)\cup\overline{\mathcal{B}}^\ConConUP_1$
	such that
  \begin{align*}
	v = \!\sum_{k\in\SetN_\ECon}\hat\mu^\ECon_k\DECon_k(x^\star) +\!\sum_{l\in\I\ICon(x^\star)}\hat\mu^\ICon_l\DICon_l(x)
	+\enspace\sum_{\mathclap{(i,j)\in\II{+}{0}(x^\star)\cup\hat{\mathcal{B}}_1}}\enspace \hat\mu^\VanConUP_{ij}\DVanCon ij(x) 
  +~\sum_{\mathclap{i\in\I0^\ConConUP(x^\star)}}~\hat\mu^\ConConUP_i\DConCon i(x),\\
	v = \!\sum_{k\in\SetN_\ECon}\overline\mu^\ECon_k\DECon_k(x^\star) +\!\sum_{l\in\I\ICon(x^\star)}\overline\mu^\ICon_l\DICon_l(x)
	+\enspace\sum_{\mathclap{(i,j)\in\II{+}{0}(x^\star)\cup\overline{\mathcal{B}}_1}}\enspace \overline\mu^\VanConUP_{ij}\DVanCon ij(x) 
  +~\sum_{\mathclap{i\in\I0^\ConConUP(x^\star)}}~\overline\mu^\ConConUP_i\DConCon i(x).\notag
	\end{align*}
	By subtracting both representations we obtain \begin{align*}
		0&=\sum_{k\in\SetN_\ECon}(\hat\mu^\ECon_k-\overline\mu^\ECon_k)\DECon_k(x^\star)
		+\sum_{l\in\I\ICon(x^\star)}(\hat\mu^\ICon_l-\overline{\mu}^\ICon_l)\DICon_l(x)
    +~\sum_{\mathclap{i\in\I0^\ConConUP(x^\star)}}~(\hat\mu^\ConConUP_i-\overline\mu^\ConConUP_i)\DConCon i(x)\\&\quad
		+\enspace\sum_{\mathclap{(i,j)\in\II{+}{0}(x^\star)}}\enspace (\hat\mu^\VanConUP_{ij}-\overline\mu^\VanConUP_{ij})\DVanCon ij(x) 
		+\sum_{(i,j)\in\hat{\mathcal{B}}_1}\hat\mu^\VanConUP_{ij}\DVanCon ij(x) 
		-\sum_{(i,j)\in\overline{\mathcal{B}}_1}\overline\mu^\VanConUP_{ij}\DVanCon ij(x).
	\end{align*} From~\ref{word:partial_BVC_LICQ} follows that $\hat{\mu}^\VanConUP_{ij} = 0$
  for all ${(i,j)\in\hat{\mathcal{B}}_1=\BetaIndicesVC}$ and we obtain \begin{align*}
	v = \sum_{k\in\SetN_\ECon}\hat\mu^\ECon_k\DECon_k(x^\star) +\sum_{l\in\I\ICon(x^\star)}\hat\mu^\ICon_l\DICon_l(x)
	+\enspace\sum_{\mathclap{(i,j)\in\II{+}{0}(x^\star)}}\enspace \hat\mu^\VanConUP_{ij}\DVanCon ij(x) 
  +~\sum_{\mathclap{i\in\I0^\ConConUP(x^\star)}}~\hat\mu^\ConConUP_i\DConCon i(x)
	\end{align*} 
  with $\overline\mu^\ICon_l\geq0$ for $l\in\I\ICon$,~$\overline\mu^\VanConUP_{ij}\geq0$ for $(i,j)\in\II{+}{0}(x^\star)$,
  ~$\overline\mu^\ConConUP_i\geq0$ for $i\in\II{0}{+}^\ConConUP(x^\star)\cup\hat{\mathcal{B}}^\ConConUP_1$. 
  This means by~\eqref{vanishing LLxstar*} and $\hat{\mathcal{B}}^\ConConUP_1=\BetaIndices$ that $v\in\Lag^{\mathrm{sg}}(x^\star)^{-}=\Lag(x^\star)^{-}$.
\end{proof}
The following corollary is the main consequence of the preceding constraint qualification results and their relations.
\begin{corollary}\label{vanishing: cor: BVC-LICQ implies strong stat}
  If $x^\star$ is a locally optimal point where~\ref{word:BVC_LICQ} holds, then $x^\star$ is a strongly stationary point.
\end{corollary}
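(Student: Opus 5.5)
The corollary follows by chaining the implications established above and then invoking the GCQ-based optimality result.

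First, since $x^\star$ is a local minimum satisfying \ref{word:BVC_LICQ}, Theorem~\ref{vanishing: thm: BVC-LICQ implies ACQ} gives that \ref{word:BVC_ACQ} and \ref{word:BVC_GCQ} hold at $x^\star$. That theorem relies on Lemma~\ref{vanishing: lem: BVC-LICQ implies BVC-MFCQ} and Theorem~\ref{vanishing: thm: BVC-MFCQ implies BVC-ACQ}; from \ref{word:BVC_ACQ} one passes to \ref{word:BVC_GCQ} by taking polars of both sides of $\Lag^\mathrm{BVC}(x^\star)=\mathcal{T}(X,x^\star)$.

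Second, Lemma~\ref{vanishing: lem: BVC-LICQ implies P-LICQ} shows that \ref{word:BVC_LICQ} implies \ref{word:partial_BVC_LICQ} with $\BetaIndices$. The reason is that linear independence of all the gradients listed in Definition~\ref{vanishing: def: BVC-LICQ} forces every multiplier in the linear combination of Definition~\ref{vanishing: partial LICQ} to vanish, in particular those on $\BetaIndicesVC$.

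Third, with both \ref{word:BVC_GCQ} and \ref{word:partial_BVC_LICQ} with $\BetaIndices$ available at the feasible point $x^\star$, Theorem~\ref{vanishing: thm: BVC-GCQ+partialLICQ} yields \ref{word:GCQ} at $x^\star$, that is, $\Lag(x^\star)^{-}=\mathcal{T}(X,x^\star)^{-}$.

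Finally, since $x^\star$ is a local minimum and \ref{word:GCQ} holds there, Theorem~\ref{vanishing: thm: kkt} supplies multipliers $\mu^\ECon,\mu^\ICon,\mu^\VanConUP,\mu^\ConConUP$ satisfying \eqref{vanishing: kkt-lagrange-equation} and \eqref{vanishing: kkt-lagrange-multiplier}. By Definition~\ref{vanishing: def: kkt}, $x^\star$ is therefore strongly stationary. Theorem~\ref{vanishing: thm: equivalence of KKT points} additionally shows that $x^\star$ is a classical KKT point, and the multipliers are unique by Theorem~\ref{vanishing: thm: BVC-LICQ implies ACQ}.

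No step here is a real obstacle, since each link is an earlier result. The one hypothesis to check is that Theorem~\ref{vanishing: thm: BVC-GCQ+partialLICQ} is applied to exactly the index set $\BetaIndices$ defined in \eqref{vanishing: def: betaH} for this $x^\star$. Lemma~\ref{vanishing: lem: BVC-LICQ implies P-LICQ} covers that set, and indeed covers any subset of $\II{0}{0}^\ConConUP(x^\star)$, so the hypothesis is met.
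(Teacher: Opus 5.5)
Your argument is correct and is the same chain the paper relies on. BVC-LICQ gives BVC-GCQ (via Lemma~\ref{vanishing: lem: BVC-LICQ implies BVC-MFCQ} and Theorems~\ref{vanishing: thm: BVC-MFCQ implies BVC-ACQ} and~\ref{vanishing: thm: BVC-LICQ implies ACQ}) and partial BVC-LICQ (Lemma~\ref{vanishing: lem: BVC-LICQ implies P-LICQ}); together these give GCQ by Theorem~\ref{vanishing: thm: BVC-GCQ+partialLICQ}, and Theorem~\ref{vanishing: thm: kkt} then yields strong stationarity, whereas the paper states the corollary without a separate proof and simply cites this chain (cf.\ Figure~\ref{vanishing: fig: CQ implikations}).
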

With Theorem~\ref{vanishing: thm: BVC-GCQ+partialLICQ} and Corollary~\ref{vanishing: cor: BVC-LICQ implies strong stat} 
we are able to verify with the fairly mild assumption (\ref{word:BVC_LICQ})  
that at any optimal point satisfying \ref{word:BVC_LICQ}, the Guignard 
constraint qualification holds. Thus strong stationarity and therefore a KKT point is obtained.

\section{A Practical Algorithm}
\label{sec:algorithm}
In the following, we generalize the algorithm given
in~\cite{flow_preprint} for solving MPVCs to the case of MPBVCs.
The key algorithmic approach is to decompose the feasible region of
the vanishing and controlling constraints into so-called
\emph{branches} according to the index sets defined
above. Specifically, at each point $x$ we distinguish for each
$i \in \SetN_\ConConUP$ whether $i \in \II{0}{-}^\ConConUP$, implying
that $\ConCon i(x) < 0$ and $\VanCon ij(x) < 0$, and its complement,
where $\ConCon i(x) \geq 0$ and $\VanCon ij(x) \geq 0$. These two
regions are connected via points $x$ where $i \in \II{0}{0}(x)$. 
The individual regions are well-behaved in the sense that normal
constraint qualifications hold with any problematic behavior showing
itself between them. We therefore begin by restricting ourselves to
one of these regions, solve one or
more subproblems within the branch, and switch branches when
necessary. In order to justify this approach, we use a projected
primal-dual gradient/antigradient flow~\cite{sequential_homotopy},
which produces a curve through the entire feasible region,
naturally leads from one branch to the other based on an augmented
Lagrangian approach, and whose equilibrium points correspond
to strongly stationary points of~\eqref{eq:mpbvc} under GCQ.

To make~\eqref{eq:mpbvc} amenable to our approach we however need to
modify the formulation from its original form. First, we convert the
nonlinear inequality constraints to a set of equality constraints and
add (possibly infinite) lower and upper bounds to all variables, which
gives rise to the convex set
\begin{equation*}
  \FS \define \{x \in \Real^{n} \mid x_{l} \leq x \leq x_{u} \}
\end{equation*}
of variable bounds. Second, we turn~\eqref{eq:mpbvc} to
its \emph{vertical form}, where
controlling and vanishing constraints correspond to individual
variables. This can of course be ensured by simply adding slack
variables and equating their values to the corresponding constraints:
We write for short $s_{\idxVC{i}{j}}\define s_{\idxVCL{i}{j}}$ for
$\idxCC{i} = 1, \ldots, n_\ConConUP$ and $j=1,\dots,n_\VanConUP$ and
order the slack variables as follows: $H_{i}(x) = s_{\idxCC{i}}$ for
$\idxCC{i}=1,\ldots,n_\ConConUP$ and
$\VanCon{i}{j}(x) = s_{\idxVC{i}{j}}$ for
$\idxCC{i}=1,\ldots,n_\ConConUP,\,j=1,\ldots,n_\VanConUP$, which
results in a slack variable vector
$s\in\Real^{(n_\VanConUP+1)n_\ConConUP}$. We also impose
bounds on $s$, producing a convex set
$\FS^\mathrm{s} \define
\{s \in \Real^{(n_\VanConUP+1)n_\ConConUP} \mid s_\mathrm{l} \leq s \leq s_\mathrm{u} \}$.
While these bounds can be trivially $\pm \infty$, it is also possible to restrict the
slack variables further to reflect bounds on the functions $H_{i}$ and
$G_{ij}$ (provided that the BVC structure is preserved).  The
resulting variant of the problem is then given by
\begin{equation}
  \label{eq:mpbvcs}
  \tag{MPBVCS}
  \begin{aligned}
    \min_{x \in \FS,s \in \FS^\mathrm{s}} \quad & f(x) \\
    \st \quad & \ECon(x) = 0, && \\
    & \ConCon{i}(x)-s_{\idxCC{i}}= 0 && \text{for } i = 1, \ldots , n_\ConConUP, \\
    & \VanCon{i}{j}(x)-s_{\idxVC{i}{j}}= 0 && \text{for } i = 1, \ldots , n_\ConConUP,~j = 1,\ldots,n_\VanConUP, \\
    & s_{\idxCC{i}}\geq0 && \text{for } i = 1, \ldots , n_\ConConUP, \\
    & s_{\idxVC{i}{j}}s_{\idxCC{i}}\geq0 && \text{for } i = 1, \ldots , n_\ConConUP,~ j = 1,\ldots,n_\VanConUP. \\
  \end{aligned}
\end{equation}
In order to extend the approach from~\cite{flow_preprint}
to~\eqref{eq:mpbvcs}, some algorithmic modifications are required,
mainly concerning the feasible set for the slack variables of each
block of vanishing constraints.  In the classic MPVC case, our
algorithm coincides with the algorithm in~\cite{flow_preprint}, thereby
facilitating the applicability of our proposed approach.

\subsection{Projected Gradient/Antigradient Flows}
Consider the nonlinear program
\begin{equation}
  \label{eq:nlp}
  \tag{NLP}
  \min_{x \in \FS} f(x) \quad \text{s.t. } c(x) = 0
\end{equation}
with constraint function $c:\Real^n\to\Real^m$ and
let $\Lag(x, y) = f(x) + \dualProd{y}{c(x)}$
and
$\Lag^{\rho}(x, y) = \Lag(x, y) + \tfrac{\rho}{2}\dualNorm{c(x)}^{2}$
denote its normal and augmented Lagrangian with $\rho > 0$ respectively.
The associated \emph{projected gradient / anti-gradient flow} is defined by
\begin{equation*}
  \begin{aligned}
    \dot{x}(t) &= P_{\mathcal{T}(\FS, x(t))} \left( -\nabla_{x} \Lag^{\rho} (x(t), y(t)) \right) &\text{ with } x(0)&=x_0 \text{, and }\\
    \dot{y}(t) &= \nabla_{y} \Lag^{\rho}(x(t), y(t)), &\text{ with } y(0)&=y_0,
  \end{aligned}
\end{equation*} with $\mathcal{T}(\FS,x(t))$ from \eqref{vanishing: T(x)} the tangent cone to $\FS$ at $x(t)$
where $P_{M}$ denotes the orthogonal projection onto  the nonempty closed convex set $M$.  
Equilibrium points of this flow coincide with stationary points of~\eqref{eq:nlp}~\cite{sequential_homotopy}.

To reach stationary points we integrate along the stiff flow with implicit schemes. 
A projected backward Euler step for $\Delta t > 0$ and $(\hat{x}, \hat{y})$ solves the system
\begin{equation}
  \label{eq:BES}
  \begin{aligned}
    x - P_{\FS}(\hat{x} - \Delta t \nabla_x \Lag^{\rho}(x, y)) &= 0, \\
    y - \hat{y} - \Delta t \nabla_y \Lag^{\rho}(x, y) &= 0,
  \end{aligned}
\end{equation}
e.g.  by a semismooth Newton (SSN) method~\cite{semismooth, ssn}.  
Equivalently, the solution of the system \eqref{eq:BES} corresponds a first-order optimal solution of the regularized subproblem
\begin{equation*}
  \begin{aligned}
    \min_{x \in \FS, w \in \Real^{m}} \; & f^{\rho}(x) + \lambda \!\left[\tfrac{1}{2}\primalNorm{x-\hat{x}}^{2} + \tfrac{1}{2}\dualNorm{w-\hat{y}}^{2}\right] \\
    \text{s.t. } & c(x) + \lambda w = 0,
  \end{aligned}
\end{equation*} where $f^{\rho}(x) \define f(x) + \tfrac{\rho}{2} \dualNorm{c(x)}^{2}$ and $\lambda\define 1/\Delta t$. 
A clear advantage of this subproblem is that it is always feasible and satisfies LICQ~\cite{sequential_homotopy}.
We can therefore solve it using a standard solver such as~\textsc{Ipopt}~\cite{ipopt}.
\subsection{Algorithmic Framework}
The practical implementation of our approach involves identifying convex sets based on the index sets of the slack variables 
and their changes, decomposing the flow into piecewise branch-consistent segments, and solving suitable reduced subsequent nonlinear problems.
\paragraph{Branch Decomposition}
We decompose the feasible region of the slack variables corresponding to
$\ConCon{i}(x) \ge 0$ and $\VanCon{i}{j}\ConCon{i} \ge 0$  for each pair $(i,j)$ into \begin{equation*}
\begin{aligned}
    \FS^\ConConUP_{+}(i) &\define \{s_{\idxCC{i}}\in \Real\,\mid\, s_{\idxCC{i}} \geq 0 \}, & \FS^\VanConUP_{+}(i,j) &\define \{s_{\idxVC{i}{j}}\in \Real\,\mid\, s_{\idxVC{i}{j}} \geq 0 \},\\
    \FS^\ConConUP_{0}(i) &\define \{s_{\idxCC{i}}\in \Real\,\mid\, s_{\idxCC{i}} = 0  \}, &\FS^\VanConUP_{0}(i,j) &\define \{s_{\idxVC{i}{j}}\in \Real\,\mid\, s_{\idxVC{i}{j}} \leq 0 \}.
  \end{aligned}
\end{equation*}We define two convex branches
\begin{equation}\label{eq: convex pieces}
  \begin{aligned}
    \FS_{+}(i,j) &\define \FS^\ConConUP_{+}(i) \times \FS^\VanConUP_{+}(i,j), \text{ and} \\
    \FS_{0}(i,j) &\define \FS^\ConConUP_{0}(i) \times \FS^\VanConUP_{0}(i,j),
  \end{aligned}
\end{equation}
where we call $ \FS_{+}(i,j)$ the \emph{upper branch} and $ \FS_{0}(i,j) $ the \emph{lower branch}. 
Note that the definition of $\FS_{+}(i,j)$ and $\FS_{0}(i,j)$ coincides
with the upper and lower branch definition presented in~\cite{flow_preprint}
for a block size of one ($n_\VanConUP = 1$).
The intersection of lower and upper branch consists of the bi-active point.
We introduce a \emph{signature} $\signature \in \{0, 1\}^{n_\ConConUP \times n_\VanConUP}$ and 
define the set of variable bounds of the slack variables of block $i$ as
\begin{align}\label{eq: convex pieces BVC}
    \FS^\mathrm{\BVC}(\signature,i) \define& \left(\bigcap_{j=1}^{n_\VanConUP}\begin{cases}
   \FS^\ConConUP_{+}(i) &\text{ if } \signature_{i,j} = 1, \\
  \FS^\ConConUP_{0}(i) &\text{ if } \signature_{i,j} = 0,
\end{cases}\right) \\
&\quad\times \left(\bigtimes_{j=1}^{n_\VanConUP} 
    \begin{cases}
   \FS^\VanConUP_{+}(i,j) &\text{ if } \signature_{i,j} = 1, \\
  \FS^\VanConUP_{0}(i,j) &\text{ if } \signature_{i,j} = 0
\end{cases}\right).\notag
\end{align}
Note that this set enforces $s_{\idxCC{i}} = 0$ if the
block corresponding to $\idxCC{i}$ contains any vanished constraints.
We employ this
definition in order to define the feasible set of the variables
in \eqref{eq:mpbvcs} restricted to the branches associated with
$\signature \in \{0,1\}^{n_\ConConUP\times n_\VanConUP}$ as
\begin{equation*}
\FS^\mathrm{b}(\signature) \define \FS^\mathrm{s} \cap \left( \bigtimes_{i = 1}^{n_\ConConUP} \FS^\mathrm{\BVC}(\signature,i),
  \right)
\end{equation*}
leading to the following nonlinear program associated with $\signature$:
\begin{equation}
  \label{eq:branch_nlp}
  \tag{NLP$\sigma$}
    \begin{aligned}
    \min_{x \in \FS, s \in \FS^\mathrm{b}(\signature)} \quad & f(x) \\
    \st \quad & g(x) = 0,  &&\\
    & \ConCon{i}(x)-s_{\idxCC{i}}= 0, && \text{for } i = 1, \ldots , n_\ConConUP,\\
    & \VanCon{i}{j}(x)-s_{\idxVC{i}{j}}= 0, && \text{for } i = 1, \ldots , n_\ConConUP,~j = 1,\ldots,n_\VanConUP \\
  \end{aligned}
\end{equation}
with convex domain $\FS \times \FS^\mathrm{b}(\signature)$.
As in the case of MPVC, a stationary point of~\eqref{eq:mpbvcs} is stationary for all~\eqref{eq:branch_nlp}, but not vice versa, which can be proven analogously to~\cite{flow_preprint}.

\paragraph{Piecewise Gradient Flows}
Starting from a initial guess $(x_0,s_0,y_0)$ with $x_0 \in \FS$ and $s_0 \in \FS^\mathrm{b}(\signature)$ for a given signature $\signature$, 
we consider the flow
\begin{equation*}
  \begin{aligned}
  (\dot{x}(t),\dot{s}(t)) &= P_{T(\FS\times \FS^\mathrm{b}(\signature), (x(t),s(t)))} \left( -\nabla_{x,s} \Lag^{\rho} (x(t), s(t), y(t)) \right),\\
  &\hspace{1em}\text{ with }x(0)=x_0,~s(0)=s_0 \text{ and }\\
    \dot{y}(t) &= \nabla_{y} \Lag^{\rho}(x(t), s(t), y(t))\text{ with }y(0)=y_0
  \end{aligned}
\end{equation*} for solving~\eqref{eq:branch_nlp}. If no signature is given, 
we refer the reader to use the branch initialization strategy described in~\cite{flow_preprint}.
Since an optimal point of~\eqref{eq:branch_nlp} is in general not a strongly stationary point,
we need to switch between convex sets at bi-active indices of $s(t)$ for some $t>0$ by flipping one or more branches,
yielding a continuous (but generally non-smooth) piecewise flow. This switching mechanism is combined with a stationarity 
test for~\eqref{eq:mpbvcs} to detect termination.
Specifically, we test stationarity according to Theorem~\ref{vanishing: thm: kkt}. 
As described in~\cite{flow_preprint}, the multipliers $\eta^\ConConUP$ and $\eta^\VanConUP$ are chosen to minimize the absolute value of the corresponding components of
$\xi^{\MPBVCS} \define - \nabla_{x,s} \Lag^{\MPBVCS}(x,s,y)$ subjecting to their sign constraints. We terminate once the
distance
\begin{equation}\label{alg:distance}
  \Delta \xi^{\MPBVCS} \define \xi^{\MPBVCS} - P_{T^{-}(\FS\times \FS^\mathrm{s}, (x^{-},s^{-}))}(\xi^{\MPBVCS})
\end{equation}
falls below a threshold $\epsilon$ in an appropriate norm and therefore stationarity is (approximately) reached.
Otherwise we change the signatures from $\signature$ to $\signature'$ based on
the signs of the negative gradient $-\nabla_{x,s} \Lag^{\rho}(x,s,y)$.
To this end, we iterate over the indices $(i, j) \in \SetN_\ConConUP\times\SetN_\VanConUP$ and make the following
changes:
\begin{itemize}
\item
  If $(i,j) \in \IndexSet_{00}(x)$, $\signature_{i,j}=1$, and
  \begin{equation*}
    (-\nabla_{s} \Lag^{\rho}(x,s,y))_{\idxCC{i}}\leq 0 \quad\textrm{and}\quad
    (-\nabla_{s} \Lag^{\rho}(x,s,y))_{\idxVC{i}{j}} < 0,
  \end{equation*}
  then we switch to the lower branch $\FS_0(i,j)$ by setting $\signature'_{i,j} = 0$.
\item
  If $(i, j) \in \IndexSet_{00}(x)$, $\signature_{i,j} = 0$, and
  \begin{equation*}
    (-\nabla_{s} \Lag^{\rho}(x,s,y))_{\idxVC{i}{j}} > 0,
  \end{equation*} or
  \begin{equation*}
    (-\nabla_{s} \Lag^{\rho}(x,s,y))_{\idxVC{i}{j}} = 0 \quad \text{and}\quad (-\nabla_{s} \Lag^{\rho}(x,s,y))_{\idxCC{i}} > 0,
  \end{equation*}
  then we switch to the upper branch $\FS_+(i,j)$ by setting $\signature'_{i,j} = 1$.
\item
  All other signature values are kept constant.
\end{itemize}
Since the changes in signatures occur only in the bi-active variables, it follows that $s \in \FS^\mathrm{b}(\signature) \cap \FS^\mathrm{b}(\signature')$.
We can therefore continue the piecewise flow through the convex piece associated with $\signature'$
starting from $(x,s)$ and $y$ by integrating the flow along the signature-modified initial value problem
\begin{equation*}
\begin{aligned}
  (\dot{x}(t),\dot{s}(t)) &= P_{T(\FS\times \FS^\mathrm{b}(\signature'), (x(t),s(t)))} \left( -\nabla_{x,s} \Lag^{\rho} (x(t), s(t), y(t)) \right) , \quad\\
      &\hspace{1em}\text{ with }x(0)=x,~s(0)=s \text{ and }\\
    \dot{y}(t) &= \nabla_{y} \Lag^{\rho}(x(t), s(t), y(t))\text{ with }y(0)=y.
  \end{aligned}
\end{equation*} 
Note that $\dot{y}(t)$ is kept as before.

\paragraph{Reduced Subsequent NLPs}
The lower branch $\FS_0$ in~\eqref{eq: convex pieces} corresponds to indices $\II{0}{-}$, 
for which the controlling constraints act as equalities and the associated 
vanishing constraints are inactive with multipliers $\mu_{ij}^\VanConUP=0$ for $(i,j) \in \II{0}{-}$. 
Motivated by the structure of the linearized cone~\eqref{eq:mpbvc-L(x)}, we omit from the subproblem the vanishing constraints 
$\VanCon{i}{j}$ and the corresponding slack variables $s_{\idxVC{i}{j}}$ for $(i,j)\in\II{0}{-}$, since they do not affect the 
optimality conditions and the associated controlling constraints must remain active at every feasible point.
Let $\signature$ be a signature for the current branch decomposition. Vanishing Constraints in the index set $\II{0}{-}$ are 
located in the lower branch. Let
\begin{equation*}
  \IUBIndices(\signature) \define\{i\in\{1,\dots,n_\ConConUP\}\,\mid\,\signature_{i,j} = 1\text{ for all }j\in\{1,\dots,n_\VanConUP\}\}
\end{equation*}
denote the index set detecting blocks with no vanishing constraint in the lower branch, since only vanishing constraints where the whole block
lies in the upper branch can contribute to optimality. Based on~\cite{flow_preprint}, the reduced slack variables are defined as follows: 
\begin{equation*}
  \tilde{s} \define \big(s_{\idxCC{1}},\ldots,s_{n_\ConConUP}, (s_{\idxVC{i}{j}},\;\text{for}\;\signature_{i,j} = 1,\,i\in\IUBIndices(\signature))\big)^T,
\end{equation*}
with the reduced feasible set for each block $i$ of vanishing constraints 
\begin{align*}
    \FS^\mathrm{\BVC red}(\signature,i) \define& \left(\bigcap_{j=1}^{n_\VanConUP}\begin{cases}
   \FS^\ConConUP_{+}(i) &\text{ if } \signature_{i,j} = 1 \\
  \FS^\ConConUP_{0}(i) &\text{ if } \signature_{i,j} = 0.
\end{cases}\right) \\
&\quad\times \left(\bigtimes_{j=1}^{n_\VanConUP} \FS^\VanConUP_{+}(i,j) \text{ if } i\in\IUBIndices(\signature) \right).
\end{align*}
The feasible set for the reduced slack variables is
\begin{equation*}
  \FS^\mathrm{red}(\signature) \define \FS^\mathrm{s} \cap \left(\bigtimes_{i = 1}^{n_\ConConUP}  \FS^\mathrm{\BVC red}(\signature,i) \right).
\end{equation*}
The reduced constraints are
\begin{equation*}
  \tilde{c}(x,\tilde{s}) \define
  \begin{pmatrix*}[l]
      \multicolumn{2}{c}{g(x)} \\
      \ConCon{i}(x)-s_{\idxCC{i}} & \text{for } i=1,\ldots,n_\ConConUP\\
      \VanCon{i}{j}(x)-s_{\idxVC{i}{j}} & \text{for } \signature_{i,j}=1,\,i\in\IUBIndices(\signature)
    \end{pmatrix*} \in \Real^r
\end{equation*}
with $r = m+n_\ConConUP+\#\{(i,j) \mid \signature_{i,j} = 1\}$. We obtain the reduced subproblem:
\begin{equation}\label{eq: Red-NLP}
  \tag{Red-NLP$\sigma$}
  \begin{aligned}
    \min_{\substack{x \in \FS \\ \tilde{s}\in \FS^\mathrm{red}(\signature) \\ \tilde{w} \in \Real^r}} & \: \: \tilde{f}^{\rho}(x,\tilde{s})
    + \lambda \left[ \tfrac{1}{2} \primalNorm{x - \hat{x}}^{2} 
        + \tfrac{1}{2} \|\tilde{s} - \tilde{\hat{s}}\|_{\tilde{S}}^2 
        + \tfrac{1}{2} \dualNorm{\tilde{w}- \tilde{\hat{y}}}^2 
    \right] \\
    \text{s.t.} & \: \: \tilde{c}(\tilde{x}) + \lambda \tilde{w} = 0,
  \end{aligned}
\end{equation} with real Hilbert space $\tilde{S}$ equipped with its induced norm 
for the reduced slack variables $\tilde{s}$.
Furthermore, \( \tilde{w} \) is a vector of the same dimension as \( \tilde{c} \), 
\( \tilde{f}^{\rho}(x,\tilde{s}) \) denotes the reduced augmented objective, 
and \( \hat{x}\), \(\tilde{\hat{s}}\), and \( \tilde{\hat{y}} \) denote 
the corresponding reduced solutions from the previous iteration. Analogously to~\cite{flow_preprint}, 
after solving~\eqref{eq: Red-NLP}, we update the slack variables \( s_{\idxVC{i}{j}} \) for \( i \not\in\IUBIndices(\signature)\) 
such that they satisfy  $\VanCon{i}{j}(x)-s_{\idxVC{i}{j}}=0$ and project the primal solution onto \( \FS^\mathrm{s}\cap \FS_0^\VanConUP(i,j) \). 
We derive the variables $y$ by $\tilde{\hat{y}} - \tilde{w}$ and assign zero multipliers to the VCs for \( i \not\in\IUBIndices(\signature)\).

\paragraph{Summary}
The following Theorem \ref{vanishing: alg thm} and Algorithm \ref{vanishing: algorithm} summarizes the results introduced in this section. 
The theorem states the theoretical foundation of the proposed method 
that equilibria of the piecewise gradient flow approach are strong stationary points.
\begin{theorem}\label{vanishing: alg thm}
	A strongly stationary point is equivalent to a point of zero flow under
  the proposed strategy based on piecewise flows, reduced subproblems, and switching strategy.
\end{theorem}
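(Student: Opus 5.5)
We prove both directions by making precise what a ``point of zero flow'' means for the proposed strategy. It is a point $(x,s,y)$ with three properties. First, the projected gradient/antigradient flow of the current branch problem \eqref{eq:branch_nlp} (equivalently of \eqref{eq: Red-NLP}) is stationary at $(x,s,y)$ for the current signature $\signature$. Second, the switching rule leaves $\signature$ unchanged. Third, the stationarity measure $\Delta\xi^{\MPBVCS}$ from \eqref{alg:distance} vanishes. By \cite{sequential_homotopy}, equilibria of the flow on the convex domain $\FS\times\FS^\mathrm{b}(\signature)$ are exactly first-order stationary points of \eqref{eq:branch_nlp}. This gives multipliers $y$ for the equality constraints $\ConCon i(x)-s_i=0$ and $\VanCon ij(x)-s_{ij}=0$, together with the normal-cone conditions $-\nabla_s\Lag(x,s,y)\in N_{\FS^\mathrm{b}(\signature)}(s)$. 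We identify $\mu^\ConConUP_i$ and $\mu^\VanConUP_{ij}$ with the corresponding components of $y$, up to sign. Then $\nabla_x\Lag^\rho=0$ is exactly \eqref{vanishing: kkt-lagrange-equation}, since $c=0$ at equilibrium and so the augmented term drops out.

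For the direction ``zero flow $\Rightarrow$ strongly stationary'', we check the sign conditions \eqref{vanishing: kkt-lagrange-multiplier} index set by index set, reading off the normal cone of the product set \eqref{eq: convex pieces BVC}.
\begin{enumerate}[label=(\roman*)]
\item For $i\in\II{+}{+}^\ConConUP\cup\II{+}{0}^\ConConUP$ the slack $s_i>0$ lies in the interior, so $\mu^\ConConUP_i=0$.
\item For $(i,j)\in\II{+}{+}\cup\II{0}{+}$ the slack $s_{ij}>0$ is interior, so $\mu^\VanConUP_{ij}=0$.
\item For $(i,j)\in\II{+}{0}$ we have $\signature_{ij}=1$, so $s_{ij}$ sits at the bound $0$ of $\FS^\VanConUP_+$, giving $\mu^\VanConUP_{ij}\ge0$.
\item For $(i,j)\in\II{0}{-}$ we have $\signature_{ij}=0$, so $s_i$ is fixed to $0$ and $\mu^\ConConUP_i$ is free. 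The reduced formulation assigns $\mu^\VanConUP_{ij}=0$, which is the required ``otherwise'' case.
\end{enumerate}

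The delicate case is the bi-active set $\II{0}{0}$, where the branch-wise normal cone is larger than the one allowed by strong stationarity. Here we use that the switching rule did not fire. If $\signature_{ij}=0$, the second bullet of the rule fails, which forces $(-\nabla_s\Lag)_{ij}\le0$ and, when that component is zero, additionally $(-\nabla_s\Lag)_i\le0$. If $\signature_{ij}=1$, the first bullet fails. Combining this with the vanishing of $\Delta\xi^{\MPBVCS}$, where $\eta^\VanConUP,\eta^\ConConUP$ are chosen to minimize the violation subject to the signs in Theorem~\ref{vanishing: thm: kkt}, we conclude $\mu^\VanConUP_{ij}=0$ and $\mu^\ConConUP_i\ge0$ whenever block $i$ contains no index of $\II{0}{-}$. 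The main obstacle is the block coupling. A single $s_i$ is shared by all $j$, and one lower-branch $j$ in block $i$ pins $s_i=0$ and frees its sign. We must therefore argue blockwise, distinguishing whether $i\in\IUBIndices(\signature)$. If $i\in\IUBIndices(\signature)$, we get $\mu^\ConConUP_i\ge0$ from $\FS^\ConConUP_+$. Otherwise, a free $\mu^\ConConUP_i$ is admissible only if some $(i,j)\in\II{0}{-}$. When block $i$ lies in $\II{0}{0}$ entirely but has $\signature_{ij}=0$, a zero test $\Delta\xi^{\MPBVCS}=0$ together with no switch is what rules out a negative $\mu^\ConConUP_i$.

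For the converse, start from a strongly stationary point with its multipliers. Choose the signature $\signature_{ij}=1$ except on $\II{0}{-}$, and set $y$ equal to the multipliers; this choice keeps the slack variables feasible. Then $\nabla_x\Lag^\rho=0$ and $c=0$, so $\dot y=0$. The sign conditions of Theorem~\ref{vanishing: thm: kkt} put $-\nabla_s\Lag$ into the normal cone of $\FS^\mathrm{b}(\signature)$, so the projected flow is zero. On $\II{0}{0}$ we have $\mu^\VanConUP_{ij}=0$ and $\mu^\ConConUP_i\ge0$, so neither switching condition is triggered. Finally, $\Delta\xi^{\MPBVCS}=0$ by construction. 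This completes the equivalence.
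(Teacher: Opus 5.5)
\textbf{There is a genuine gap: the bi-active case of the forward direction is circular.} Your translation of a flow equilibrium into the normal-cone conditions of the branch problem, cases (i)--(iv), and your converse with $\signature_{ij}=1$ off $\II{0}{-}$ are sound.

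The problem is that the case $\II{0}{0}$ is the entire content of the theorem, and you close it by appeal to $\Delta\xi^{\MPBVCS}=0$, which you built into the definition of a zero-flow point. By construction, $\Delta\xi^{\MPBVCS}$ is the residual of the conditions of Theorem~\ref{vanishing: thm: kkt}: the multipliers $\eta$ are chosen subject to exactly those sign constraints. So this assumes the conclusion. Your no-switch discussion never actually produces $\mu^\VanConUP_{ij}=0$ or $\mu^\ConConUP_i\ge 0$.

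The theorem concerns equilibria of the piecewise flow, so these signs must come from branch stationarity and the failure of the switching rules alone. They do. Let $\gamma\define-\nabla_s\Lag^\rho(x,s,y)$, so that $\gamma=-(\mu^\ConConUP,\mu^\VanConUP)$ once $c=0$, and take $(i,j)\in\II{0}{0}$.
\begin{itemize}
\item If $i\in\IUBIndices(\signature)$, the normal cone of $\FS^\ConConUP_+(i)\times\FS^\VanConUP_+(i,j)$ at the origin gives $\gamma_i\le 0$ (i.e.\ $\mu^\ConConUP_i\ge0$) and $\gamma_{ij}\le0$. Failure of the first bullet then forces $\gamma_{ij}=0$.
\item If $i\notin\IUBIndices(\signature)$, the reduced problem drops every $\VanCon{i}{j'}$ of block $i$, so $\gamma_{ij'}=0$ for all $j'$. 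If block $i$ has no index in $\II{0}{-}$, one of its lower-branch indices is bi-active. Failure of the second alternative of the second bullet then gives $\gamma_i\le 0$.
\end{itemize}
No appeal to $\Delta\xi^{\MPBVCS}$ is needed, and it should be removed from your definition.

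\textbf{The reduced subproblem is indispensable, so your parenthetical ``equivalently of \eqref{eq: Red-NLP}'' is false.} In case (iv) you invoke the reduction only where it is vacuous, since $s_{ij}<0$ is interior anyway. To see that the unreduced \eqref{eq:branch_nlp} does not suffice, take a block with $(i,j')\in\II{0}{-}$ and $(i,j)\in\II{0}{0}$ with $\signature_{ij}=1$.
\begin{itemize}
\item Here $s_i$ is fixed, so $\gamma_i$ is free, and a branch-stationary point may have $\gamma_i>0>\gamma_{ij}$.
\item This triggers neither bullet: the first needs $\gamma_i\le0$, and the second concerns only bi-active lower-branch indices.
\item Yet $\mu^\VanConUP_{ij}>0$ on $\II{0}{0}$ violates strong stationarity.
\end{itemize}
Such spurious equilibria are excluded only because the reduced subproblem assigns zero multipliers to all vanishing constraints of blocks $i\notin\IUBIndices(\signature)$. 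Your proof has to use this explicitly.
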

\begin{proof}
Analogous to the proof of Theorem 4.1 in~\cite{flow_preprint}.
\end{proof}
The method is summarized as follows:
\begin{algorithm}
  \DontPrintSemicolon
  \caption{Gradient Flow approach for MPBVC}\label{vanishing: algorithm}
  \KwIn{Primal guess $x_0$, optional dual guess $y_0$, initial $\rho, \lambda$, parameters $\lambda_{\mathrm{inc}}, N_\mathrm{max}$
  and tolerance $\epsilon$.}
  \KwOut{Solution $x,s,y$}
  $x \gets x_0$, $y \gets y_0$\;
  Determine branch configuration $\FS_0 | \FS_+$ and initialize $s$\;
  \While{$k < N_\mathrm{max}$}{
    Solve reduced Problem~\eqref{eq: Red-NLP}\;
    
    Update $y$ and $s$ from solution of~\eqref{eq: Red-NLP}\;
    
    \IfU{\(\Vert \Delta \xi^{\MPBVCS}\Vert < \epsilon\) \text{from~\eqref{alg:distance}}}{
      \Return{$x,s,y$}\;
    }
    \Else{
      Update branch configuration $\FS_0 | \FS_+$\;
      Update $\rho$ and $\lambda$ with step increasing factor $\lambda_{\mathrm{inc}}$ according to \cite{flow_preprint}\;
    }
  }
  \Return{$x,s,y$}\;
\end{algorithm}

\section{Numerical Experiments}\label{sec:num_ex}

We demonstrate the effectiveness of our approach
on several test instances of MPBVCs. Since our approach coincides with~\cite{flow_preprint} in the MPVC case, we only show MPBVC with $n_\VanConUP>1$.
We will discuss an academic example and truss topology design problems from~\cite[Sec.~9.5]{diss_hoheisel},
which are adapted for block vanishing constraints with two or three load cases. 
All experiments were performed on an \texttt{Intel Core i7-14700} clocked
at \SI{2.1}{\giga\hertz} based on an implementation in \texttt{Python 3.13.5} with \texttt{CasADi 3.7.1}~\cite{casadi} and
using \texttt{Ipopt 3.14.11}~\cite{ipopt} as subproblem solver.
 
\subsection{An Academic Example}
We begin by examining the following small MPBVC academic example in two variables and two blocks with two VCs each:
\begin{mini}|s|
	{x_1,x_2}{\exp(x_1)x_2^2}
	{\label{vanishing: MPBVC-Example2}}
	{}
		\addConstraint{\big(-x_1^2x_2 +2x_2+4\big)\big(x_1+x_2+2\big)}{\geq0,}{\quad\text{Index}~\VanConUP:(1,1)}
		\addConstraint{\big(-\frac{1}{2}x_2^2+x_2\big)\big(x_1+x_2+2\big)}{\geq0,}{\quad\text{Index}~\VanConUP:(1,2)}
		\addConstraint{x_1+x_2+2}{\geq0,}{\quad\text{Index}~\ConConUP: ~1}
		\addConstraint{\big(x_1-1\big)\big(x_1x_2-2\big)}{\geq0,}{\quad\text{Index}~\VanConUP:(2,1)}
		\addConstraint{\big(x_2-1.2599\big)\big(x_1x_2-2\big)}{\geq0,}{\quad\text{Index}~\VanConUP:(2,2)}
		\addConstraint{x_1x_2-2}{\geq0,}{\quad\text{Index}~\ConConUP: ~2}
\end{mini}
The optimal point is at $x^\star = (1.5874,1.2599)$ with the index sets
 \begin{align*}
   \II{+}{+} &= \{(1,1),(2,1)\}, &\II{+}{0} &= \{(1,2) \},\\
   \II{0}{+} &= \{(2,1) \}, &\II{0}{-} &= \{(2,2)\},
\end{align*}
and nonzero Lagrange multipliers $\mu_2^\ConConUP = 7.2301$ and $\mu_{12}^\VanConUP = 0.8475$.
The feasible set is non convex due to the active controlling constraint $\ConCon{2}$ and the vanishing constraint $\VanCon{2}{1}$.
Figure~\ref{fig:num_ex:academic_flow} illustrates the continuous flow
generated by our approach using $\lambda=100$ and a step factor
$\lambda_\mathrm{inc}=1.01$ for visualization purposes, while
Figure~\ref{fig:num_ex:academic_discrete_flow} shows its discrete
approximation. Using the standard parameter values $\lambda=0.1$ and
$\lambda_\mathrm{inc}=2$, the method converges to the optimal point in seven
iterations from both starting points $\xi_0=(1,4.5)$ and $\xi_1=(5,3)$. The
numbers of \textsc{Ipopt} subproblem iterations are \num{51} and
\num{61}, respectively.
\begin{figure}[h]
  \centering
    \begin{subfigure}[t]{0.499\textwidth}
        \centering\includegraphics[width=\textwidth]{academic_feas}
        \caption{Continuous flow (spikes show branch switches)}
        \label{fig:num_ex:academic_flow}
    \end{subfigure}~
    \begin{subfigure}[t]{0.499\textwidth}
        \centering\includegraphics[width=\textwidth]{academic_feas_short}
        \caption{Discrete flows (seven iterations with standard settings)}
        \label{fig:num_ex:academic_discrete_flow}
    \end{subfigure}
    \caption{
      Feasible set (gray), active controlling constraints (dark gray), and piecewise flows (emanating from
      the two starting points ${\xi_0=(1,4.5)}$ and ${\xi_1=(5,3)}$) for the academic example.}
    \label{fig:num_ex:academic}
  \end{figure}
\subsection{Truss Topology Optimization}
Truss topology optimization problems arise in civil engineering, 
where structures must withstand prescribed loads while minimizing material usage 
(see~\cite{topology_optimization}) with typical applications including bridge and cantilever designs.
We consider a problem variant where material is placed between fixed positions
in two dimensions with the goal of minimizing the total material volume
while ensuring that compliance bounds are met under different loads. This problem can be formulated as
\begin{equation*}
  \begin{aligned}
    \min_{a \in \Real^{N},\, u \in \Real^{Ld}} \quad & \sum_{i=1}^{N} \ell_i a_i \\
    \text{s.t.} ~\quad\quad&K(a) u_l = f_l, && l = 1,\ldots,L, \\
    &f_l^{T} u_l \le c, && l = 1,\ldots,L, \\
    & 0 \le a_i \le \overline{a}, && i = 1,\ldots,N, \\
    &\hspace{-5em} (-\sigma_{il}^2(a,u) +(\overline{\sigma}+\underline{\sigma})\sigma_{il}(a,u) - \underline{\sigma}\overline{\sigma})\, a_i \ge 0, && i = 1,\ldots,N,\; l = 1,\ldots,L.
  \end{aligned}
\end{equation*}
Here, $a_i$ denotes the cross-sectional area of bar $i$ of length $\ell_i > 0$, 
bounded by $0 \le a_i \le \overline{a}$. These areas naturally give rise to the objective
of minimizing the volume of the resulting structure.
Each load case $l$ produces displacements $u$ according to an equilibrium condition $K(a)u_l = f_l$, where $f_l \in \Real^d$ are external
forces acting on the free nodes, and $K(a)$ is the global stiffness matrix
\[
  K(a) = \sum_{i=1}^{N} a_i \frac{E}{\ell_i} \gamma_i \gamma_i^{T},
\]
with $E$ the Young's modulus and $\gamma_i \in \Real^{d}$ containing the directional cosines of bar $i$.  
The compliance constraints $f_l^{T}u_l \le c$ limit the structure's flexibility by a constant $c>0$.
The stress in bar $i$ under load case $l$ is
\[
  \sigma_{il}(a,u) = E \frac{\gamma_i^{T}u_l}{\ell_i},
\]
which must satisfy compression and tension bounds $\underline{\sigma} \le \sigma_{il}(a,u) \le \overline{\sigma}$. This condition must, however, only hold when $a_{i} > 0$, which can
be modeled by the vanishing constraint
\[
  (\overline{\sigma} - \sigma_{il}(a,u))\cdot(\sigma_{il}(a,u)-\underline{\sigma})= -\sigma_{il}^2(a,u) +(\overline{\sigma}+\underline{\sigma})\sigma_{il}(a,u) - \underline{\sigma}\overline{\sigma}.
\]
Hence, the formulation is an MPBVC. Note that
for $\-\underline{\sigma}=\overline{\sigma}$ this expression simplifies to
$(\overline{\sigma}^2 - \sigma_{il}(a,u)^2)$, \ie a
well-known~\cite{diss_hoheisel} formulation. We only
note $\overline{\sigma}$ if $-\underline{\sigma} = \overline{\sigma}$.

All instances are normalized by setting $E \equiv 1$ for comparability. The forces for the two load case instances 
are taken from the Hook examples in~\cite{diss_hoheisel}, after removing redundant bars. In addition to the benchmark 
instances from the literature, we also consider for the first time in the MPVC literature truss topology 
optimization problems with three load cases ($L=3$). Further details on the problem 
setup and the initialization can be found in~\cite{flow_preprint}. The resulting benchmark problems are summarized 
in Table~\ref{tab:num_ex:problemslist}.
\begin{table}[ht]
  \caption{List of problem instances. Changing sizes and parameters are visible. $L$ corresponds to the number of load cases.
  \label{tab:num_ex:problemslist}}
  \begin{adjustbox}{max width=\textwidth}
  \centering\begin{tabular}{l S[table-format=1] S[table-format=3] S[table-format=4] S[table-format=4] S[table-format=4] S[table-format=3.1]  S[table-format=3] S[table-format=1.1] S[table-format=2.1]}
     \toprule
     \textbf{Problem} & $L$& \textbf{$\#$bars} &\textbf{$\#$vars} & \textbf{$\#$Cons} &\textbf{$\#$VC}  & $\overline{a}$ & $c$ & ~$\overline{\sigma}$~ & ~$\underline{\sigma}$~\\
     \midrule
    \instName{TenBar1}& 2 &  10 &  48 &  38 &  20 & 100.0 &  10 &   1.0 & -1.0\\ 
    \instName{TenBar2}& 3 &  10 &  67 &  57 &  30 & 100.0 &  10 &   1.0 & -1.0\\ 
    \instName{Cant1} &  2 & 224 & 770 & 546 & 448 &   1.0 & 100 &   2.2 & -2.2\\ 
    \instName{Cant2} &  2 & 224 & 770 & 546 & 448 &   1.0 & 100 &   2.2 & -1.2\\ 
    \instName{Hook1} &  2 & 661 &2173 &1512 &1322 & 100.0 & 100 &   3.5 & -3.5\\ 
    \instName{Hook2} &  2 & 661 &2173 &1512 &1322 & 100.0 & 100 &   3.0 & -3.0\\ 
    \instName{Hook3} &  3 & 661 &2929 &2268 &1983 & 100.0 & 100 &   3.5 & -3.5\\ 
    \instName{Hook4} &  3 & 661 &2929 &2268 &1983 & 100.0 & 100 &   3.0 & -1.7\\ 
    \instName{Hook5} &  3 & 661 &2929 &2268 &1983 &   0.7 & 100 &   3.0 & -1.7\\ 
    \bottomrule
  \end{tabular}
\end{adjustbox}
\end{table}
\FloatBarrier
\paragraph{Ten-bar Truss}
The first design problem we examine is the so-called \emph{ten-bar} truss problem, as presented in~\cite{diss_hoheisel},
and depicted in Figure~\ref{fig:num_ex:tenbar_potential}. This problem consists of ten potential bars which are positioned between six points. 
Among these points, two are fixed to a wall, while a loading force pointing downward is applied at the lower right corner.
The parameters for this problem are set as follows: $(\overline{a}, c, \overline{\sigma}) = (\num{100.0}, \num{10.0}, \num{1.0})$.

In~\cite{flow_preprint} it was already shown that we can apply the presented approach to single load cases.
Therefore, we extend the ten-bar problem to a problem with two load cases \instName{TenBar1} and three load cases \instName{TenBar2}.
The results are visible in Figure~\ref{fig:num_ex:tenbar}. Instance \instName{TenBar1} has the same structure as the ten-bar problem with one load case. 
The bars themselves are thicker, which leads to a higher volume of \num{9.0}.
For \instName{TenBar2} there are two additional bars needed and therefore the volume increased.
The instance \instName{TenBar2} is a MPBVC with three VCs in each block, which has not shown in Literature before.
The detailed results of \instName{TenBar2} are noted in Table~\ref{tab:num_ex:tenbar}. Observe that the stress values are inside the bounds. 
All three compliance constraints are inactive, which coincides with the single load case observations in~\cite{flow_preprint} and~\cite{Hoheisel22}.
Here, $\rho=0.2$ was used to reduce the overall iterations to \num{10} and, respectively \num{12}. 
The number of total subproblem iterations are \num{258} and \num{344} of \textsc{ipopt}.
Since the problem size increases with the number of load cases, a longer 
computation time and a higher number of iterations is unsurprising.
\begin{figure}[ht]
  \centering
    \begin{subfigure}[t]{0.3\textwidth}
        \includegraphics[width=\textwidth]{tenbar_potential}
        \caption{Potential bar structure}
        \label{fig:num_ex:tenbar_potential}
      \end{subfigure}~
    \begin{subfigure}[t]{0.3\textwidth}
        \includegraphics[width=\textwidth]{TenBar1}
        \caption{Solution bar structure with two load cases}
        \label{fig:num_ex:tenbar_solution}
      \end{subfigure}~\begin{subfigure}[t]{0.3\textwidth}
        \includegraphics[width=\textwidth]{TenBar2}
        \caption{Solution bar structure with three load cases}
        \label{fig:num_ex:tenbar_solution2}
      \end{subfigure}~
    \caption{Potential bars and solution structures of the \instName{TenBar1} and \instName{TenBar2} instances. \instName{TenBar1} visually coincide with e.g.~\cite{diss_hoheisel,Hoheisel22}.
    The entire structure is fixed to the gray wall on the left. The load forces are marked in dark gray.}
    \label{fig:num_ex:tenbar}
\end{figure}
\begin{table}
  \caption{Results of the \instName{TenBar2} instance for the controlling constraints $a_i$, the stress values $\sigma_{il}$, the displacement variables $u_{lj}$ for the load cases $l=1,2,3$.
  We also show the compliance $f_l^Tu_l,~i=1,2,3$ and optimized volume $V^{\star}$.}
  \label{tab:num_ex:tenbar}
  \begin{adjustbox}{max width=\textwidth}
  \centering\begin{tabular}{S[table-format=2] S[table-format=1.6] S[table-format=1.6] S[table-format=1.6] S[table-format=1.6] | S[table-format=1] S[table-format=1.6] | S[table-format=1] S[table-format=1.6] | S[table-format=1] S[table-format=1.6]}
     \toprule
     {$i$} & {$a_i^{\star}$} & {$\sigma_{i1}(a^{\star},u^{\star})$} & {$\sigma_{i2}(a^{\star},u^{\star})$} & {$\sigma_{i3}(a^{\star},u^{\star})$} & {$1j$} & {$ u_{1j}^{\star}$} & {$2j$} & {$ u_{2j}^{\star}$} & {$3j$} & {$ u_{3j}^{\star}$}\\
     \midrule
     1 & 1.39214 & -0.98193 & 1.00000 &-0.42166 &1&-0.98193 & 1 & 1.00000 & 1 &-0.42167 \\
     2 & 0.89522 & -1.00000 & 0.17039 &-0.65241 &2&-2.23228 & 2 & 0.63250 & 2 &-2.42167 \\
     3 & 1.63302 &  1.00000 &-0.06605 & 0.25289 &3& 1.00000 & 3 &-0.06605 & 3 & 0.25290 \\
     4 & 0.83017 &  0.62517 & 0.18375 & 1.00000 &4&-3.00000 & 4 & 0.40685 & 4 &-1.55771 \\
     5 & 0.47801 & -0.76772 &-0.22565 & 0.86395 &5&-1.64860 & 5 & 2.00000 & 5 &-0.42167 \\
     6 & 1.49999 & -0.66667 & 1.00000 & 0       &6&-7.64860 & 6 & 2.47290 & 6 &-2.23228 \\
     7 & 0       & -0.99876 &-0.02150 & 0.04248 &7& 1.99852 & 7 & 0.09109 & 7 & 0.34965 \\
     8 & 0       &  0.99829 & 0.15715 & 0.09675 &8&-7.21053 & 8 & 1.49890 & 8 &-3.10803 \\
     9 & 1.41421 &  1.00000 & 0       & 0       &\multicolumn{2}{c}{$f_1^Tu_1^{\star} = \num{7.64860} $} & \multicolumn{2}{c}{$f_2^Tu_2^{\star} = \num{3.00000} $} & \multicolumn{2}{c}{$f_3^Tu_3^{\star} = \num{2.42167} $} \\
     10 & 0      &  0.43856 &-0.97400 &-0.87575 &\multicolumn{6}{c}{$V^{\star} = \num{9.44323}$} \\
     \bottomrule
  \end{tabular}
\end{adjustbox}
\end{table}

\FloatBarrier

\paragraph{Cantilever Problem}

The second truss design problem is the design of a cantilever arm. 
The potential bar structure of \instName{Cant1} and \instName{Cant2} is illustrated in Figure~\ref{fig:num_ex:cantilever_potential} 
and consists of \num{27} points arranged in three rows. 
As in the previous example, the structure is fixed on the left side to a wall, and two loading forces are applied in the lower right corner. 
Potential bars connect all pairs of points, with long bars overlapping shorter ones being removed. 
This results in \num{224} potential bars. Following the approach in~\cite{diss_hoheisel},
the parameter set $(\overline{a}, c, \overline{\sigma}) = (\num{1.0}, \num{100.0}, \num{2.2})$ for instance \instName{Cant1} is employed.
For instance \instName{Cant2} we change $\underline{\sigma}$ from $-2.2$ to $\underline{\sigma}=-1.2$, 
since some material could be better in tension compared to their compression capability.
It was demonstrated in~\cite{diss_hoheisel} and~\cite{flow_preprint} that, with the parameter set of \instName{Cant1},
vanished constraints occur in the lower branch of the solution for a single load case.

For both instances we used $\rho=10^{-3}$ and $\lambda_\mathrm{inc}=1.9$.
The solution of \instName{Cant1} is depicted in Table~\ref{tab:num_ex:problemssol} and is
similar to those of the cantilever instance \instName{Cant2a} of~\cite{flow_preprint}. The volume of \num{23.9741} for the single load case increased to \num{23.88549}
in the double load case. In Figure~\ref{fig:num_ex:cantilever_solution} the solution bar structure is shown. 
As expected, the constraint on the maximal stress value for appearing bars is active.
To compute \instName{Cant2} we used the primal solution of \instName{Cant1}.
Here the volume again increased to \num{33.68315} and the resulting structure contains more bars in the upper right to satisfy 
the compression constraint of $\underline{\sigma}=-1.2$ (see Figure~\ref{fig:num_ex:cantilever_solution_2}). 
Since the instance \instName{Cant2} is initialized with the primal solution of \instName{Cant1} the number of iterations and \textsc{ipopt} subproblem
iterations decreased from \num{14} to \num{13} and \num{1497} to \num{1266} respectively. 
In both instances \instName{Cant1} and \instName{Cant2} the maximal value for the cross sectional area $\overline{a}=1$ is attained.

\begin{figure}[ht]
  \centering
    \begin{subfigure}[t]{0.3\textwidth}
        \includegraphics[width=\textwidth]{cantilever_potential}
        \caption{Potential bar structure}
        \label{fig:num_ex:cantilever_potential}
      \end{subfigure}~
    \begin{subfigure}[t]{0.3\textwidth}
        \includegraphics[width=\textwidth]{cantilever_2}
        \caption{Solution bar structure of \instName{Cant1}}
        \label{fig:num_ex:cantilever_solution}
      \end{subfigure}~
    \begin{subfigure}[t]{0.3\textwidth}
        \includegraphics[width=\textwidth]{cantilever_3}
        \caption{Solution bar structure of \instName{Cant2}}
        \label{fig:num_ex:cantilever_solution_2}
      \end{subfigure}
      \caption{Potential structure of the cantilever problem in (a) and the solutions in (b) and (c) of instances \instName{Cant1} and \instName{Cant2}. 
      The load forces are marked in dark gray.
    On the left the structure is fixed on the gray wall.}
    \label{fig:num_ex:cantilever}
\end{figure}
\FloatBarrier
\paragraph{Hook Problem}

The last problem, also described in~\cite{diss_hoheisel}, is referred to as the \emph{hook problem},
named for the shape of the structure, as shown in Figure~\ref{fig:num_ex:hook_potential}. 
This problem consists of \num{35} nodes, with the structure fixed at its top.
Two load forces are applied to the node at the far middle right in instances \instName{Hook1} and \instName{Hook2}. 
The load forces are taken from~\cite{diss_hoheisel}.
Furthermore a third force is applied in instances \instName{Hook3}, \instName{Hook4}, and \instName{Hook5}
at the far upper right corner as a linear combination of the two load forces in the far middle right corner.
\begin{figure}[t]
  \centering
  \begin{subfigure}[t]{0.3\textwidth}
        \includegraphics[width=\textwidth]{hook_potential}
        \caption{Potential bar structure}
        \label{fig:num_ex:hook_potential}
      \end{subfigure}~
      \begin{subfigure}[t]{0.3\textwidth}
        \includegraphics[width=\textwidth]{hook_1}
        \caption{\instName{Hook1} solution bar structure}
        \label{fig:num_ex:hook_solution_1}
      \end{subfigure}~
      \begin{subfigure}[t]{0.3\textwidth}
        \includegraphics[width=\textwidth]{hook_2}
        \caption{\instName{Hook2} solution bar structure}
        \label{fig:num_ex:hook_solution_2}
      \end{subfigure}
    \caption{Ground structure of the hook problem in (a) and solutions of the instance \instName{Hook1} in (b) and \instName{Hook2} in (c) which differ in the parameter $\overline{\sigma}$. 
    The load forces $f_1$ and $f_2$ are marked in dark gray. The structure is fixed on the gray area.}
    \label{fig:num_ex:hook_three}
\end{figure}
All nodes have integral coordinates, which allows for the detection of overlapping potential bars as in the previous case, 
leading to a total of \num{661} potential bars. As in the previous cantilever example we only show parameter sets, 
where actual vanished constraints are present. We therefore consider the three different parameter sets: 
$\overline{a}=100$, $c=100$, $(\overline{\sigma},\underline{\sigma})\in\{(3.5,-3.5),~(3.0,-3.0),~(3.0,-1.7)\}$. 
See Table~\ref{tab:num_ex:problemslist} for further details of all hook instances.
For all \instName{Hook} instances we used a tolerance of $10^{-10}$ in \textsc{Ipopt}~\cite{ipopt}.
The detailed solutions are listed in Table~\ref{tab:num_ex:problemssol} and
visually presented in Figure~\ref{fig:num_ex:hook} and Figure~\ref{fig:num_ex:hook_three}.
\begin{figure}[t]
  \centering
  \begin{subfigure}[t]{0.3\textwidth}
    \includegraphics[width=\textwidth]{hook_3}
    \caption{\instName{Hook3} solution bar structure}
    \label{fig:num_ex:hook_solution_3}
  \end{subfigure}~
  \begin{subfigure}[t]{0.3\textwidth}
    \includegraphics[width=\textwidth]{hook_4}
    \caption{\instName{Hook4} solution bar structure}
    \label{fig:num_ex:hook_solution_4}
  \end{subfigure}~
  \begin{subfigure}[t]{0.3\textwidth}
    \includegraphics[width=\textwidth]{hook_5}
    \caption{\instName{Hook5} solution bar structure}
    \label{fig:num_ex:hook_solution_5}
  \end{subfigure}
  \caption{We show the solutions of the hook problem with three load forces $f_1$,~$f_2$ and $f_3$, which are marked in dark gray. 
  The solution of the instance \instName{Hook3} in (a) and \instName{Hook4} in (b) which differ in the parameter $\overline{\sigma}$ and $\underline{\sigma}$.
  The solution of the instance \instName{Hook5} differs to \instName{Hook4} in the value of $\bar{a}=0.7$. 
  The structure is fixed on the gray area.}
  \label{fig:num_ex:hook}
\end{figure}
In \instName{Hook1}, it can be observed that, in contrast to~\cite{diss_hoheisel},
the stress constraint is not active for the first load $l=1$. 
We observe more bars with $a_i>0$ in our solution resulting in a higher volume.
For the second load $l=2$ no stress constraint is active
with an overall maximal value of $\hat{\sigma}=2.6373$.
The structure forms a flat half-circle open at the top (see Figure~\ref{fig:num_ex:hook_solution_1}).
The structure is slim in the lower section, as in the single load case in~\cite{flow_preprint} and in
instance \instName{Hook2} in~\cite{diss_hoheisel}.
In the upper section, there are more (small) bars that serve to stabilize the structure. 
There are \num{49} bars in total. In contrast to the results presented in~\cite{diss_hoheisel}, we obtain a larger 
maximum cross-sectional area value, $\max a_i = 0.63861$, which again leads to a higher volume of \num{17.54340}.
The parameters $\rho=0.1$ and $\lambda_\mathrm{inc}=1.8$ were used.

Figure~\ref{fig:num_ex:hook_solution_2} shows the solution for instance \instName{Hook2}. 
A larger volume of \num{18.13786} is computed than in~\cite{diss_hoheisel} and \instName{Hook1}. 
The structure has been changed to adapt to the smaller $\overline{\sigma}=3.0$. 
This results in more vanishing constraints in the lower branch and increases the computation time. In general, 
both instances require more than \num{2000} \texttt{Ipopt} iterations and a significant amount of time to compute, 
which illustrates the complexity of vanishing constraint problems. 
For the instances \instName{Hook2} and \instName{Hook3} the parameters $\rho=1.0$ and $\lambda_\mathrm{inc}=1.8$ were used 

The new instance \instName{Hook3} has three load cases, where the third load acts on the far upper right corner. 
The applied force is the sum of the other two loading forces. Compared to \instName{Hook1} we take the same $\overline{\sigma}=3.5$. 
Surprisingly we obtain fewer constraints in their lower branches, but we obtain larger volume of $21.31727$. In Figure~\ref{fig:num_ex:hook_solution_3}
one can see many diagonal bars in the upper part producing this higher volume.
The $\max a_i = 0.7899$ has also increased compared to \instName{Hook1}. 
The number of \texttt{Ipopt} iterations decreased to \num{1977} compared to \instName{Hook1} and \instName{Hook2}. 
However, due to the increased problem size the computation time has increased.
The values $\hat{\sigma}_\mathrm{max}=3.036$ and $\hat{\sigma}_\mathrm{min}=-3.0683$ occur in the second load case, which means no stress constraint is active.

The next instance is \instName{Hook4}, which differs from \instName{Hook3} in the smaller stress constraint values 
$\overline{\sigma}=3.0$ and $\underline{\sigma}=1.7$ (see Table~\ref{tab:num_ex:problemslist}).
Since in \instName{Hook3} the maximum value of $\hat{\sigma}_\mathrm{max}=3.036$ is slightly above $\overline{\sigma}=3.0$, 
it is unsurprising that the solutions are quite similar. 
The maximal stress value is active in the second and third load case, while the minimal stress of \num{-1.7} is active in all three load cases. 
The number of actual vanished constraints are also increased to $58$ constraints in the lower branch.
The volume of \num{25.89200} is larger (see Table~\ref{tab:num_ex:problemssol}), since more bars appear and $\max a_i = 1.039$ has increased. 
To speed up the computation time, we used the values $\rho=10$ and $\lambda_\mathrm{inc}=2.1$. Although the number of iterations decreased to \num{17},
the number of \texttt{Ipopt} iterations increased to \num{3244} and the computation time to \num{22427.64} seconds.

\begin{table}[hb]
  \caption{Detailed solutions of listed problems from Table~\ref{tab:num_ex:problemslist} showing the optimal volume $V^\star$, the numbers of resulting bars,
  the maximal stress values $\overline{\sigma}$ and the maximal stress value for appearing bars $\hat{\sigma}_\mathrm{max}$ (\ie $a_i>0$) over all load cases,
  the configuration between lower branches ($\#\FS_0$) and upper branches ($\#\FS_+$), the number of iterations, the overall number of \texttt{Ipopt} iterations, 
  and the computation time in seconds. Instances initialized with the previous instance are marked with *.}
  \label{tab:num_ex:problemssol}
  \begin{adjustbox}{max width=\textwidth}
      \centering\begin{tabular}{l S[table-format=2.4] S[table-format=2] S[table-format=2.4] S[table-format=2.4]  r S[table-format=2] S[table-format=4] S[table-format=5.2]}
      \toprule
      \textbf{Problem} & \textbf{$V^\star$} & \textbf{\#bars} & $\sigma_{\max}$ & $\hat{\sigma}_{\max}$ &  $\#\FS_0 | \#\FS_+$ & \textbf{\#it} & \textbf{\texttt{Ipopt}\#it} & {\textbf{time} [\si{\second}]}\\
      \midrule
      \instName{TenBar1}&   9.00000 &  5 & 1.0     & 1.0 & $0|20$\,~~~ & 10 & 258 & 0.90\\ %rho 1e-2, inc = 1.8
      \instName{TenBar2}&   9.44323 &  7 & 1.0     & 1.0 & $0|30$\,~~~ & 12 & 344 & 1.55\\ %rho 1e-2, inc = 1.8
      \instName{Cant1} &    23.88549 & 30 &  3.1262 & 2.2 & $16|432$~~ & 14 & 1497 &604.94\\ %rho 1e-3, inc = 1.9
      \instName{Cant2}* &   33.68315 & 38 &  5.1518 & 2.2 & $52|396$~~ & 13 & 1266 &528.41\\ %rho 1e-3, inc = 1.9
      \instName{Hook1}&    17.54340 & 49 & 18.5920 & 2.6373 & $37|1285$~ & 18 & 2062 &11022.07\\ %rho 1e-1, inc = 1.8
      \instName{Hook2}&    18.13786 & 47 & 13.9829 & 3.0 & $36|1286$~ & 19 & 2464 & 13176.11\\  %rho 1e0, inc = 1.8
      \instName{Hook3}&    21.31727 & 53 &  8.9399 & 3.0360 & $19|1964$~ & 18 & 1977 &14439.54\\ %rho 1e0, inc = 1.8
      \instName{Hook4}&    25.89200 & 57 &  7.0122 & 3.0 & $58|1950$~ & 17 & 3244 & 22427.64\\ %rho = 1e1, lambda_inc = 2.1
      \instName{Hook5}&    27.43300 & 83 &  7.5173 & 3.0 & $96|1887$~ & 17 & 5518 &  39996.46\\ %rho = 5e1, lambda_inc = 2.2
      \bottomrule
    \end{tabular}
  \end{adjustbox}
\end{table}
\newpage
The final instance is \instName{Hook5} was configured with $\bar{a}=0.7$, based on the maximum value $\max a_i = 0.7899$ from instance \instName{Hook3}.
The values $\rho=50$ and $\lambda_\mathrm{inc}=2.2$ were used to keep the iterations at a low level. 
However the \texttt{Ipopt} iterations increased to \num{5518} and the resulting longer computation time of \num{39996.46} seconds 
illustrating the complexity of the instance. The structure is shown in Figure~\ref{fig:num_ex:hook_solution_5} with 
\num{83} bars and a volume of \num{27.43300}. 
In the lower part more bars are present to support the upper load force $f_3$.

\FloatBarrier
\paragraph{Comparison to Relaxation Approach}
To the best of our knowledge, the relaxation method proposed in~\cite{diss_pal} is the only available approach for solving MPBVC problems. 
We therefore briefly compare results of the relaxation framework in~\cite{diss_pal} for the truss topology instances.
For truss topology optimization problems, the relaxed formulation reads
\begin{equation*}
  \begin{aligned}
      \min_{a \in \Real^{N}, u \in \Real^{Ld}} \quad & \sum_{i = 1}^{N} \ell_{i} a _i \\
    \st \quad & K(a) u_{l} = f_{l} & \quad \text{for all } l = 1, \ldots , L \\
                                                     & f_{l}^{T} u_{l} \leq c & \quad \text{for all } l = 1, \ldots , L \\
                                                     & 0 \leq a_{i} \leq \overline{a} & \quad \text{for all } i = 1, \ldots , N \\
                                                     & r_i(a,u)a_{i} \geq -t & \quad \text{for all } i = 1, \ldots , N.
  \end{aligned}
\end{equation*} Where $t>0$ is a \emph{relaxation parameter} and \begin{equation*}
  \begin{aligned}
    r^{i}(a,u) &\define \min_{l=1,\dots,L} (-\sigma_{il}^2(a,u) +(\overline{\sigma}+\underline{\sigma})\sigma_{il}(a,u) - \underline{\sigma}\overline{\sigma})
  \end{aligned}
\end{equation*}
 or a smooth approximation of $r_i$.
The computation times are very short whenever a solution is obtained, \ie whenever \texttt{Ipopt}~\cite{ipopt} terminates with the message "Optimal solution found".
Convergence is achieved only for the instances \instName{TenBar1} and \instName{TenBar2} for all relaxation parameters $t \in \{10^{-2},10^{-3},\dots,10^{-9}\}$, 
resulting in the same structures with the same volumes as the solutions presented in Figure~\ref{fig:num_ex:tenbar} and Table~\ref{tab:num_ex:problemssol}. 
Independent of $t$, the solver \texttt{Ipopt} required $16$ iterations for \instName{Tenbar1} for convergence, 
while $8$ iterations for \instName{TenBar2} were performed for convergence.
Note that convergence was obtained by using the internal \texttt{CasADi} functions \texttt{mmin} or \texttt{logsumexp}. Using \texttt{logsumexp} results in an underestimation 
of the volumes, lower than in the single load case stated in~\cite{flow_preprint}.

For all remaining instances, we tested relaxation parameters $t = 10$ down to $t = 10^{-9}$, combined with either zero initialization, 
the initialization described in~\cite{flow_preprint} or results of the previous larger relaxation parameter. In all cases, the solver terminated at
infeasible points or reached the maximum number of iterations. Increasing the maximum number of \texttt{Ipopt} iterations to \num{5000}
did not alter the results, since \texttt{Ipopt} consistently struggled to sufficiently reduce the dual infeasibility.
\FloatBarrier

\section{Conclusion}
This work presents a novel method of a gradient flow approach for 
computing strongly stationary points of Mathematical Programs with Blocks of Vanishing Constraints.
Due to the natural loss of constraint qualifications for MPBVC we developed a corresponding constraint qualification theory
by extending the existing theory for classical Mathematical Problems with Vanishing Constraints.
We proved that an LICQ-type constraint qualification guarantees strong stationarity of local minimizers.
The proposed method exploits the problem's structure via a slack variable reformulation of the vanishing and controlling constraints.
In particular, it selects and transitions between convex subsets of the feasible region associated with the slack variables 
corresponding to each pair of vanishing and controlling constraints. 
The strategy enables tracking of the primal-dual gradient/anti-gradient flow at each iteration through the solution of a reduced subproblem. 
The performance and effectiveness of the proposed method is demonstrated on truss topology design problems 
with multiple load cases. It outperforms a state-of-the-art relaxation method used for comparison.

\section*{Acknowledgements}

Supported by the German Federal Ministry of Research, Technology and Space (BMFTR) under Grant No. 05M22VHA.
{\sloppy
\printbibliography

@article{mpvc,
  author = {Achtziger, Wolfgang and Kanzow, Christian},
  title = {Mathematical programs with vanishing constraints: optimality conditions and constraint qualifications},
  journal = {Mathematical Programming},
  year = {2008},
  volume = {114},
  pages = {69--99},
  publisher = {Springer},
  doi = {10.1007/s10107-006-0083-3}
}

@article{semismooth,
  author = {Mifflin, Robert},
  title = {Semismooth and semiconvex functions in constrained optimization},
  journal = {SIAM Journal on Control and Optimization},
  year = {1977},
  volume = {15},
  number = {6},
  pages = {959--972},
  publisher = {SIAM},
  doi = {10.1137/0315061}
}

@article{ssn,
  author = {Hinterm{\"u}ller, Michael},
  title = {Semismooth Newton methods and applications},
  journal = {Department of Mathematics, Humboldt-University of Berlin},
  year = {2010}
}

@book{mpec,
  author = {Luo, Zhi-Quan and Pang, Jong-Shi and Ralph, Daniel},
  title = {Mathematical programs with equilibrium constraints},
  publisher = {Cambridge University Press},
  year = {1996},
  isbn = {978-0521-572-903}
}

@article{mfcq,
  author = {Mangasarian, Olvi and Fromovitz, Stan},
  title = {The Fritz John necessary optimality conditions in the presence of equality and inequality constraints},
  journal = {Journal of Mathematical Analysis and applications},
  year = {1967},
  volume = {17},
  number = {1},
  pages = {37--47},
  publisher = {Elsevier},
  doi = {10.1016/0022-247X(67)90163-1}
}

@misc{flow_preprint,
  author = {Christoph Hansknecht and Julian Niederer and Andreas Potschka},
  title = {A Flow-based Method for Problems with Vanishing Constraints},
  year = {2026},
  url = {https://arxiv.org/abs/2602.23830},
  eprint = {2602.23830},
  archiveprefix = {arXiv},
  primaryclass = {math.OC}
}

@article{sequential_homotopy,
  author = {Potschka, Andreas and Bock, Hans Georg},
  title = {A sequential homotopy method for mathematical programming problems},
  journal = {Mathematical Programming},
  year = {2021},
  volume = {187},
  number = {1},
  pages = {459--486},
  publisher = {Springer},
  doi = {10.1007/s10107-020-01488-z}
}

@book{numerical_optimization,
  author = {Nocedal, Jorge and Wright, Stephen J.},
  title = {Numerical Optimization},
  publisher = {Springer New York},
  year = {2006},
  isbn = {978-0-387-30303-1}
}

@article{scholtes,
  author = {Scholtes, Stefan},
  title = {Convergence properties of a regularization scheme for mathematical programs with complementarity constraints},
  journal = {SIAM Journal on Optimization},
  year = {2001},
  volume = {11},
  number = {4},
  pages = {918--936},
  publisher = {SIAM},
  doi = {10.1137/S1052623499361233}
}

@article{mpvc_stat_cons,
  author = {Hoheisel, Tim and Kanzow, Christian},
  title = {Stationary conditions for mathematical programs with vanishing constraints using weak constraint qualifications},
  journal = {Journal of Mathematical Analysis and Applications},
  year = {2008},
  volume = {337},
  number = {1},
  pages = {292--310},
  publisher = {Elsevier}
}

@article{guignard_stat_cons,
  author = {Guignard, Monique},
  title = {Generalized Kuhn--Tucker conditions for mathematical programming problems in a Banach space},
  journal = {SIAM Journal on Control},
  year = {1969},
  volume = {7},
  number = {2},
  pages = {232--241},
  publisher = {SIAM},
  doi = {10.1137/0307016}
}

@techreport{abadie1966kuhn,
  author = {Abadie, Jean},
  title = {On the Kuhn-Tucker theorem},
  year = {1966},
  url = {https://api.semanticscholar.org/CorpusID:122112088}
}

@article{on_the_abadie,
  author = {Hoheisel, Tim and Kanzow, Christian},
  title = {On the Abadie and Guignard constraint qualifications for mathematical programmes with vanishing constraints},
  journal = {Optimization},
  year = {2009},
  volume = {58},
  number = {4},
  pages = {431--448},
  publisher = {Taylor \& Francis},
  doi = {10.1080/02331930701763405}
}

@phdthesis{diss_hoheisel,
  title={Mathematical programs with vanishing constraints},
  author={Hoheisel, Tim},
  year={2009},
  school={University of W{\"u}rzburg}
}

@book{topology_optimization,
  author = {Bendsoe, Martin Philip and Sigmund, Ole},
  title = {Topology optimization: theory, methods, and applications},
  publisher = {Springer Science \& Business Media},
  year = {2013},
  isbn = {978-3642076985}
}

@article{Hoheisel22,
  author = {Tim Hoheisel and Blanca Pablos and Aram Pooladian and Alexandra Schwartz and Luke Steverango},
  title = {A study of one-parameter regularization methods for mathematical programs with vanishing constraints},
  journal = {Optimization Methods and Software},
  year = {2022},
  volume = {37},
  number = {2},
  pages = {503--545},
  publisher = {Taylor \& Francis},
  doi = {10.1080/10556788.2020.1797025},
}

@article{mpbvc_pal,
  author = {Palagachev, Konstantin and Gerdts, Matthias},
  title = {Mathematical programs with blocks of vanishing constraints arising in discretized mixed-integer optimal control problems},
  journal = {Set-Valued and Variational Analysis},
  year = {2015},
  volume = {23},
  number = {1},
  pages = {149--167},
  publisher = {Springer},
  doi = {10.1007/s11228-014-0297-0}
}

@phdthesis{diss_pal,
  author = {Palagachev, Konstantin},
  title = {Mixed-integer optimal control and bilevel optimization: Vanishing constraints and scheduling tasks},
  year = {2017},
  school = {Neubiberg, Universit{\"a}t der Bundeswehr M{\"u}nchen, 2017}
}

@article{relaxation_method,
  author = {Izmailov, Alexey and Solodov, Mikhail},
  title = {Mathematical programs with vanishing constraints: optimality conditions, sensitivity, and a relaxation method},
  journal = {Journal of Optimization Theory and Applications},
  year = {2009},
  volume = {142},
  number = {3},
  pages = {501--532},
  publisher = {Springer},
  doi = {10.1007/s10957-009-9517-4}
}

@article{giorgi,
  author = {Giorgi, Giorgio and others},
  title = {A guided tour in constraint qualifications for nonlinear programming under differentiability assumptions},
  journal = {University of Pavia, Department of Economics and Management, Tech. Rep},
  year = {2018}
}

@article{ipopt,
  author = {W{\"a}chter, Andreas and Biegler, Lorenz T},
  title = {On the implementation of an interior-point filter line-search algorithm for large-scale nonlinear programming},
  journal = {Mathematical programming},
  volume = {106},
  number = {1},
  pages = {25--57},
  year = {2006},
  publisher = {Springer},
  doi = {10.1007/s10107-004-0559-y}
}

@article{casadi,
  author = {Joel A E Andersson and Joris Gillis and Greg Horn and James B Rawlings and Moritz Diehl},
  title = {{CasADi} -- {A} software framework for nonlinear optimization and optimal control},
  journal = {Mathematical Programming Computation},
  volume = {11},
  number = {1},
  pages = {1--36},
  year = {2019},
  publisher = {Springer},
  doi = {10.1007/s12532-018-0139-4}
}

@article{farkas,
  author = {Farkas, Julius},
  title = {Theorie der einfachen Ungleichungen.},
  journal = {Journal f{\"u}r die reine und angewandte Mathematik (Crelles Journal)},
  year = {1902},
  volume = {1902},
  number = {124},
  pages = {1--27},
  publisher = {De Gruyter},
  url = {http://eudml.org/doc/149129}
}

@misc{nurkanovic2025globally,
  author = {Nurkanovi{\'c}, Armin and Leyffer, Sven},
  title = {A globally convergent method for computing B-stationary points of mathematical programs with equilibrium constraints},
  year = {2025},
  url = {https://arxiv.org/abs/2501.13835},
  eprint = {2501.13835},
  archiveprefix = {arXiv},
  primaryclass = {math.OC}
}

@article{steffensen2010new,
  author = {Steffensen, Sonja and Ulbrich, Michael},
  title = {A new relaxation scheme for mathematical programs with equilibrium constraints},
  journal = {SIAM Journal on Optimization},
  year = {2010},
  volume = {20},
  number = {5},
  pages = {2504--2539},
  publisher = {SIAM},
  doi = {10.1137/090748883}
}

@article{nurkanovic2025solving,
  author = {Nurkanovi{\'c}, Armin and Pozharskiy, Anton and Diehl, Moritz},
  title = {Solving Mathematical Programs with Complementarity Constraints Arising in Nonsmooth Optimal Control},
  journal = {Vietnam Journal of Mathematics},
  year = {2025},
  volume = {53},
  number = {3},
  pages = {659--697},
  publisher = {Springer},
  doi = {10.1007/s10013-024-00704-z}
}

@article{hoheisel2013theoretical,
  author = {Hoheisel, Tim and Kanzow, Christian and Schwartz, Alexandra},
  title = {Theoretical and numerical comparison of relaxation methods for mathematical programs with complementarity constraints},
  journal = {Mathematical Programming},
  year = {2013},
  volume = {137},
  number = {1},
  pages = {257--288},
  publisher = {Springer},
  doi = {10.1007/s10107-011-0488-5}
}

@article{FP2010,
  author = {Diego Feijer and Fernando Paganini},
  title = {Stability of primal-dual gradient dynamics and applications to network optimization},
  journal = {Automatica},
  year = {2010},
  volume = {46},
  number = {12},
  pages = {1974-1981},
  doi = {10.1016/j.automatica.2010.08.011}
}

@article{CMC2016,
  author = {Ashish Cherukuri and Enrique Mallada and Jorge Cortés},
  title = {Asymptotic convergence of constrained primal-dual dynamics},
  journal = {Systems \& Control Letters},
  year = {2016},
  volume = {87},
  pages = {10-15},
  doi = {10.1016/j.sysconle.2015.10.006}
}

@article{QuLi2018,
  author  = {Qu, Guannan and Li, Na},
  title   = {On the Exponential Stability of Primal-Dual Gradient Dynamics},
  journal = {IEEE Control Systems Letters},
  volume  = {3},
  number  = {1},
  pages   = {43-48},
  year    = {2019},
  doi     = {10.1109/LCSYS.2018.2851375}
}

@article{CP2011,
  author  = {Antonin Chambolle and Thomas Pock},
  title   = {A First-Order Primal-Dual Algorithm for Convex Problems with Applications to Imaging},
  journal = {Journal of Mathematical Imaging and Vision},
  volume  = {40},
  number  = {1},
  pages   = {120--145},
  year    = {2011},
  doi     = {10.1007/s10851-010-0251-1}
}
}
%\printbibliography

\appendix
\section{Appendix}
\label{sec:appendix}
The proof of the following theorem is standard and is based on ~\cite{mpbvc_pal,diss_pal}
\vanishingKKT*
\begin{proof}

To prove the theorem we use Farkas Lemma in a modern reformulation of the original result \cite{farkas}:
\begin{lemma}[Farkas]\label{vanishing: lem: Farkas}
Let $B\in\Real^{m\times n}$ and $c\in \Real^n$. Then the following statements are equivalent:\begin{itemize}
    \item The system $B^T\xi =c$ has a solution $\xi\geq0$.
    \item $c^Td\geq0$ holds for all elements in the set $\{d\in\Real^n\,\mid\,Bd\geq0\}$.
\end{itemize}
\end{lemma}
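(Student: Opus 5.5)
The direction from the first to the second statement is immediate. If $c=B^T\xi$ with $\xi\geq 0$ and $d$ satisfies $Bd\geq 0$, then
\[
c^Td=\xi^T(Bd)\geq 0,
\]
because this is a sum of products of nonnegative numbers. All the work is in the converse, which I would prove by contraposition with a separation argument.

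Let $b_1,\dots,b_m\in\Real^n$ be the rows of $B$ and set
\[
K\define\{B^T\xi \mid \xi\in\Real^m,\ \xi\geq 0\}=\Big\{\textstyle\sum_{i=1}^m\xi_i b_i \;\Big|\; \xi\geq 0\Big\}.
\]
$K$ is a nonempty convex cone. Assume $c\notin K$; I will construct $d$ with $Bd\geq 0$ and $c^Td<0$.

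\textbf{Step 1: $K$ is closed.} This is the step I expect to be the main obstacle, since a linear image of a closed cone need not be closed in general. I would use a conic Carath\'eodory argument. Take any $y\in K$ and write it with a representation $y=\sum_{i\in S}\xi_i b_i$, $\xi_i>0$, whose support $S$ is minimal. Suppose the vectors $\{b_i\}_{i\in S}$ were linearly dependent, say $\sum_{i\in S}\alpha_i b_i=0$ with some $\alpha_i>0$ (replace $\alpha$ by $-\alpha$ if needed). Replacing $\xi$ by $\xi-t\alpha$, with $t=\min\{\xi_i/\alpha_i : \alpha_i>0\}$, gives a nonnegative representation of $y$ with smaller support, a contradiction. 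Hence $K$ is the finite union, over all linearly independent subsets $S$ of the rows, of the cones $K_S=\{\sum_{i\in S}\xi_i b_i \mid \xi\geq 0\}$. Each $K_S$ is the image of the closed orthant under an injective linear map, which is a linear homeomorphism onto its range. Therefore each $K_S$ is closed, and so is the finite union $K$.

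\textbf{Step 2: separation.} Because $K$ is closed, convex and nonempty, the Euclidean projection $p$ of $c$ onto $K$ exists. It is characterized by
\[
(c-p)^T(y-p)\leq 0 \quad\text{for all } y\in K.
\]
Inserting $y=0$ and $y=2p$ (both lie in $K$ since $K$ is a cone) gives $(c-p)^Tp=0$. Hence $(c-p)^Ty\leq 0$ for all $y\in K$. Set $d\define p-c$, which is nonzero because $c\notin K$. Then $d^Ty\geq 0$ for every $y\in K$.

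\textbf{Step 3: conclusion.} Taking $y=b_i$ in Step 2 gives $(Bd)_i=b_i^Td\geq 0$ for each $i$, so $Bd\geq 0$. Moreover,
\[
c^Td=(p-d)^Td=p^Td-\|d\|^2=-\|d\|^2<0,
\]
where $p^Td=-(c-p)^Tp=0$ by Step 2. Thus $d$ violates the second statement, which proves the contrapositive of the converse direction.
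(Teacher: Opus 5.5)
Your proof is correct and complete. The conic Carath\'eodory reduction shows that $K=\{B^T\xi \mid \xi\geq 0\}$ is a finite union of cones $K_S$ generated by linearly independent rows. Each $K_S$ is closed as the image of an orthant under an injective linear map. The projection step, testing with $y=0$ and $y=2p$ to get $(c-p)^Tp=0$, then yields a $d$ with $Bd\geq 0$ and $c^Td=-\|d\|^2<0$. The degenerate cases (empty minimal support for $y=0$, or $m=0$) also go through.

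The paper gives no proof of this lemma. It quotes it as a classical result with a pointer to Farkas' 1902 paper, and uses it as a black box in the appendix to turn $\nabla f(x^\star)^Td\geq 0$ on the linearized cone (obtained from GCQ) into explicit nonnegative multipliers. So the difference is citation versus a self-contained argument. Citing keeps the appendix short and is entirely appropriate for such a standard fact. Your version makes the appendix self-contained and isolates the one genuinely nontrivial ingredient: closedness of the finitely generated cone $K$. That step guarantees the multipliers in the KKT-type theorem exist exactly, as a nonnegative combination of the active gradients; separation alone would only give $c\in\overline{K}$. Purely algebraic proofs, e.g.\ via Fourier--Motzkin elimination or induction on the number of rows, avoid topology altogether but are longer. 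Your geometric route is the standard textbook one.
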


Since $x^\star$ is a local minimum it holds $\nabla f(x^\star)^T\tilde{d}\geq0$ for all $\tilde{d}\in\mathcal{T}(X,x^\star)$,
\ie with \ref{word:GCQ} if follows that $- \nabla f(x^\star)\in \mathcal{T}(X,x^\star)^{-}= \Lag(x^\star)^{-}$.
From the definition of the polar cone it then follows that
$\nabla f(x^\star)^Td \geq $ for all $d\in\Lag(x^\star)$,
where $\Lag(x^\star)$ is given according to Lemma~\ref{sec:linearized_cone_def}.
Based on the matrix $A$ defined as
\begin{align*}
  A := \begin{pmatrix}
    -\DECon(x^\star)^T \\
    \DECon(x^\star)^T \\
    \DICon_{\I\ICon(x^\star)}(x^\star)^T \\
    -\DConCon{\II{0}{-}^\ConConUP(x^\star)}(x^\star)^T  \\
    \DConCon{\I{0}^\ConConUP(x^\star)}(x^\star)^T  \\
    \nabla \VanConUP_{\II{+}{0}(x^\star)}(x^\star)^T \\
  \end{pmatrix}
\end{align*}
this condition is equivalent to $\nabla f(x)^Td \geq 0$ for all $d$ such that $Ad \geq 0$.
    It follows from Lemma~\ref{vanishing: lem: Farkas}
    that there exists a vector $\xi$ such that $\nabla f(x) - A^T\xi = 0$, where $\xi\in\Real^N$
    with $N= 2\cdot\#\SetN_\ECon+\#\I\ICon(x^\star) + 2\cdot\#\II{0}{-}(x^\star) + \#\II{0}{0}(x^\star) + \#\II{0}{+}(x^\star) + \#\II{+}{0}(x^\star)$.
    We partition $\xi$ according to the rows of $A$ and obtain
    \begin{align*}
        \xi = (\hat\mu^\ECon_-,\hat\mu^\ECon_+,\hat\mu^\ICon,\hat\mu^\ConConUP_-,\hat\mu^\ConConUP,\hat\mu^\VanConUP)\geq 0.
    \end{align*} 
    We set \begin{align*}
    	\mu^\ECon_k &:= \hat\mu^\ECon_{k,+}-\hat\mu^\ECon_{k,+}\quad\quad\!\text{for }k\in\SetN_\ECon, \\
    	\mu^\ICon_l &:= \begin{cases}
    		\hat\mu^\ICon_l &\quad\quad\quad\quad\text{for }l\in\I\ICon(x^\star),\\
    		0 &\quad\quad\quad\quad\text{otherwise},
    	\end{cases}\\
        \mu^\ConConUP_i &:= \begin{cases}
            \hat\mu^\ConConUP_i-\hat\mu^\ConConUP_{i,-} &\text{ if }i\in\II{0}{-}^\ConConUP(x^\star),\\
            \hat\mu^\ConConUP_i &\text{ if }i\in\I{0}^\ConConUP(x^\star) \text{ and } i\not\in\II{0}{-}^\ConConUP(x^\star),\\
            0 &\text{ for }i\in\II{+}{0}^\ConConUP(x^\star)\cup\II{+}{+}^\ConConUP(x^\star),
        \end{cases}\\
        \mu^\VanConUP_{ij}&:=\begin{cases}
            \hat\mu^\VanConUP_{ij} &\quad\quad\quad\quad\text{for }(i,j)\in\II{+}{0}(x^\star),\\
            0 &\quad\quad\quad\quad\text{otherwise},
        \end{cases}
    \end{align*}
    and obtain the claimed result.
\end{proof}
The following theorem is based on~\cite[Theorem~1]{mpvc}.
\vanishingEquiv*

\begin{proof}
  Since the Lagrange multipliers associated with the equality and inequality
  constraints remain unchanged, we only consider the block vanishing constraints.
  We first show that the classical KKT conditions imply strong stationarity under GCQ.
    Since~\ref{word:GCQ} holds at the local minimum $x^\star$ there exists Lagrange multipliers $\lambda^\ConConUP\in\Real^{n_\ConConUP}$ and $\lambda^{\VanConUP\ConConUP}\in\Real^{n_\VanConUP\cdot n_\ConConUP}$
    such that \begin{align}\label{vanishing: kkt-lagrange-equation-classic}
        \nabla f(x^\star) -\sum_{i\in\SetN_\ConConUP}\lambda^\ConConUP_i\DConCon i(x^\star)-\sum_{(i,j)\in\SetN_\ConConUP\times\SetN_\VanConUP} \lambda^{\VanConUP\ConConUP}_{ij}\nabla \big(\ConCon i(x^\star)\VanCon ij(x^\star)\big) = 0
    \end{align} with \begin{align}
       \ConCon i(x^\star)\VanCon ij(x^\star)&\geq 0, & \ConCon i(x^\star)&\geq 0, \notag\\
        \lambda^{\VanConUP\ConConUP}_{ij}\ConCon i(x^\star)\VanCon ij(x^\star)&= 0, & \lambda^\ConConUP_i\ConCon i(x^\star)&=0,\label{vanishing: kkt-lagrange-multiplier-classic} \\
        \lambda^{\VanConUP\ConConUP}_{ij}&\geq0, & \lambda^{h}_{i}&\geq0,\notag
    \end{align}where $i\in\SetN_\ConConUP$ and $(i,j)\in\SetN_\ConConUP\times\SetN_\VanConUP$.
    The gradient of the product is \begin{align*}
        \nabla \big(\ConCon i(x^\star)\VanCon ij(x^\star)\big) = \begin{cases}
            0, &\hspace{-9em} \text{ if }(i,j)\in\II{0}{0},\\
           \ConCon i(x^\star)\nabla \VanCon ij(x^\star), &\hspace{-9em}\text{ if }(i,j)\in\II{+}{0}, \\
            \DConCon i(x^\star)\VanCon ij(x^\star), &\hspace{-9em}\text{ if }(i,j)\in\II{0}{+}\cup\II{0}{-}, \\
            \DConCon i(x^\star)\VanCon ij(x^\star) +\ConCon i(x^\star)\nabla \VanCon ij(x^\star), &\hspace{-0.75em}\text{ if }(i,j)\in\II{+}{+}.\\
        \end{cases}
    \end{align*}
    We introduce the multipliers \begin{align}\label{vanishing: construction-lagrange-multiplier}
        \mu^\ConConUP_i &= \lambda^\ConConUP_i + \sum_{j\in\SetN_\VanConUP}\lambda^{\VanConUP\ConConUP}_{ij}\VanCon ij(x^\star) & \text{and} && \mu^\VanConUP_{ij} =\lambda^{\VanConUP\ConConUP}_{ij}\ConCon i(x^\star).
    \end{align}
    We now verify the conditions~\eqref{vanishing: kkt-lagrange-equation} and~\eqref{vanishing: kkt-lagrange-multiplier}.
    \begin{align*}
    &\nabla_xL(x,\mu^\ConConUP,\mu^\VanConUP)\\
        &= \nabla f(x^\star) -\sum_{i\in\SetN_\ConConUP}\mu^\ConConUP_i\DConCon i(x^\star)-\sum_{(i,j)\in\SetN_\ConConUP\times\SetN_\VanConUP} \mu^{g}_{ij}\nabla \VanCon ij(x^\star)\\
        &= \nabla f(x^\star) -\sum_{i\in\SetN_\ConConUP}\big[\lambda^\ConConUP_i+\sum_{j\in\SetN_\VanConUP}\lambda^{\VanConUP\ConConUP}_{ij}\VanCon ij(x^\star)\big]\DConCon i(x^\star)\\
        &\quad-\sum_{(i,j)\in\SetN_\ConConUP\times\SetN_\VanConUP} \lambda^{\VanConUP\ConConUP}_{ij}\big(\ConCon i(x^\star)\DVanCon ij(x^\star)\big)\\
        &= \nabla f(x^\star) -\sum_{i\in\SetN_\ConConUP}\lambda^\ConConUP_i\DConCon i(x^\star)\\
        &\quad - \sum_{(i,j)\in\SetN_\ConConUP\times\SetN_\VanConUP}\lambda^{\VanConUP\ConConUP}_{ij}\VanCon ij(x^\star)\DConCon i(x^\star) -\sum_{(i,j)\in\SetN_\ConConUP\times\SetN_\VanConUP} \lambda^{\VanConUP\ConConUP}_{ij}\big(\ConCon i(x^\star)\DVanCon ij(x^\star)\big)\\
        &= \nabla f(x^\star) -\sum_{i\in\SetN_\ConConUP}\lambda^\ConConUP_i\DConCon i(x^\star)-\sum_{(i,j)\in\SetN_\ConConUP\times\SetN_\VanConUP} \lambda^{\VanConUP\ConConUP}_{ij}\nabla \big(\ConCon i(x^\star)\VanCon ij(x^\star)\big) \\&= 0.
    \end{align*}
    By construction from~\eqref{vanishing: construction-lagrange-multiplier} it follows that \begin{align*}
        \mu^\ConConUP_i &=\lambda^\ConConUP_i + \sum_{j\in\SetN_\VanConUP}\lambda^{\VanConUP\ConConUP}_{ij}\VanCon ij(x^\star) = \lambda^\ConConUP_i, \text{ since } \lambda^{\VanConUP\ConConUP}_{ij}\VanCon ij(x^\star)=0,\\
        \mu^\VanConUP_{ij} &= \lambda^{\VanConUP\ConConUP}_{ij}\ConCon i(x^\star) \begin{cases}
            \geq 0 &\text{ if } (i,j)\in\II{+}{0} \\
            =0 & \text{otherwise}
        \end{cases}
    \end{align*}
    Conversely, since~\ref{word:GCQ} holds we know that the local minimum $x^\star$ satisfies~\eqref{vanishing: kkt-lagrange-equation} 
    and~\eqref{vanishing: kkt-lagrange-multiplier}
    with the Lagrange multipliers $\mu^\ConConUP\in\Real^{n_\ConConUP}$ and $\mu^\VanConUP\in\Real^{n_\VanConUP\cdot n_\ConConUP}$.
    We set \begin{align*}
        \lambda^{\VanConUP\ConConUP}_{ij} &= \frac{\mu^\VanConUP_{ij}}{\ConCon i(x^\star)} &&\text{if }(i,j)\in\II{+}{0}, \\
        \lambda^{\VanConUP\ConConUP}_{ij} &= 0 &&\text{if }(i,j)\in\II{+}{+}, \\
        \lambda^{\VanConUP\ConConUP}_{ij} &\geq \max\bigg\{0,\frac{\mu^\ConConUP_i}{\VanCon ij(x^\star)}\bigg\} &&\text{if }(i,j)\in\II{0}{-}, \\
        \lambda^{\VanConUP\ConConUP}_{ij} &\in\bigg[0, \frac{\mu^\ConConUP_i}{\VanCon ij(x^\star)}\bigg]\neq\emptyset &&\text{if }(i,j)\in\II{0}{+}, \\
        \lambda^{\VanConUP\ConConUP}_{ij} &\geq 0  &&\text{if }(i,j)\in\II{0}{0}, \\
        &\text{and} &&\\
        \lambda^\ConConUP_i &= \mu^\ConConUP_i - \sum_{j\in\SetN_\VanConUP}\lambda^{\VanConUP\ConConUP}_{ij}\VanCon ij(x^\star). &&
    \end{align*}
For~\eqref{vanishing: kkt-lagrange-equation-classic} if follows that
\begin{align*}
    &\vspace{-1.5em}\nabla f(x^\star) -\sum_{i\in\SetN_\ConConUP}\lambda^\ConConUP_i\DConCon i(x^\star)-\sum_{(i,j)\in\SetN_\ConConUP\times\SetN_\VanConUP} \lambda^{\VanConUP\ConConUP}_{ij}\nabla \big(\ConCon i(x^\star)\VanCon ij(x^\star)\big) \\
    &=\nabla f(x^\star) -\sum_{i\in\SetN_\ConConUP}\big[\mu^\ConConUP_i - \sum_{j\in\SetN_\VanConUP}\lambda^{\VanConUP\ConConUP}_{ij}\VanCon ij(x^\star)\big]\DConCon i(x^\star)\\
    &\quad-\sum_{(i,j)\in\SetN_\ConConUP\times\SetN_\VanConUP} \lambda^{\VanConUP\ConConUP}_{ij}\nabla \big(\ConCon i(x^\star)\VanCon ij(x^\star)\big) \\
    &=\nabla f(x^\star) -\sum_{i\in\SetN_\ConConUP}\mu^\ConConUP_i\DConCon i(x^\star) 
     +\sum_{i\in\SetN_\ConConUP}\sum_{j\in\SetN_\VanConUP}\lambda^{\VanConUP\ConConUP}_{ij}\VanCon ij(x^\star)\DConCon i(x^\star)\\&\quad-\sum_{(i,j)\in\SetN_\ConConUP\times\SetN_\VanConUP}\lambda^{\VanConUP\ConConUP}_{ij}\VanCon ij(x^\star)\DConCon i(x^\star)-\sum_{(i,j)\in\SetN_\ConConUP\times\SetN_\VanConUP} \lambda^{\VanConUP\ConConUP}_{ij}\ConCon i(x^\star)\DVanCon ij(x^\star)\\
    &=\nabla f(x^\star) -\sum_{i\in\SetN_\ConConUP}\mu^\ConConUP_i\DConCon i(x^\star) -\sum_{(i,j)\in\II{+}{0}} \frac{\mu^\VanConUP_{ij}}{\ConCon i(x^\star)}\cdot\ConCon i(x^\star)\DVanCon ij(x^\star)\\
    &=\nabla f(x^\star) -\sum_{i\in\SetN_\ConConUP}\mu^\ConConUP_i\DConCon i(x^\star)-\sum_{(i,j)\in\II{+}{0}} \mu^\VanConUP_{ij}\DVanCon ij(x^\star)\\
    &=\nabla f(x^\star) -\sum_{i\in\SetN_\ConConUP}\mu^\ConConUP_i\DConCon i(x^\star)-\sum_{(i,j)\in\SetN_\ConConUP\times\SetN_\VanConUP} \mu^\VanConUP_{ij}\DVanCon ij(x^\star)\\
    &=0.
\end{align*}
By construction it holds that $\lambda^{\VanConUP\ConConUP}_{ij}\geq0$ and $\lambda_i\geq0$ for all $(i,j)\in\SetN_\ConConUP\times\SetN_\VanConUP$. The equations in~\eqref{vanishing: kkt-lagrange-multiplier-classic} are therefore satisfied.
\end{proof} 
\end{document}